\newtheorem{theorem}{Theorem}[section]
\newtheorem{definition}[theorem]{Definition}
\numberwithin{equation}{section}
\newtheorem{lemma}[theorem]{Lemma}
\newtheorem{proposition}[theorem]{Proposition}
\newtheorem{corollary}[theorem]{Corollary}
\newtheorem{remark}[theorem]{Remark}
\numberwithin{equation}{section}
\def\N{\mathbb{N}}
\def\Z{\mathbb{Z}}
\def\C{\mathbb{C}}
\def\S{\mathcal{S}}
\def\F{\mathcal{F}}
\def\B{\mathcal{B}}
\def\LL{\mathcal{L}}
\renewcommand{\phi}{\varphi}
\renewcommand{\epsilon}{\varepsilon}
\def\GG{\mathcal{G}}
\def\tt{t(n)}
\newcommand{\1}{{\text{\Large $\mathfrak 1$}}}
\renewcommand{\emptyset}{\varnothing}
\def\C{{\mathcal C}}
\newcommand{\til}{\widetilde}
\newcommand{\tstop}{t_{\mathrm{stop}}}
\newcommand{\pr}[1]{\mathbb{P}\!\left(#1\right)}
\newcommand{\E}[1]{\mathbb{E}\!\left[#1\right]}
\newcommand{\estart}[2]{\mathbb{E}_{#2}\!\left[#1\right]}
\newcommand{\prstart}[2]{\mathbb{P}_{#2}\!\left(#1\right)}
\newcommand{\prcond}[3]{\mathbb{P}_{#3}\!\left(#1\;\middle\vert\;#2\right)}
\newcommand{\escond}[3]{\mathbb{E}_{#3}\!\left[#1\;\middle\vert\;#2\right]}
\newcommand{\estarth}[2]{\mathbb{\widehat{E}}_{#2}\!\left[#1\right]}
\newcommand{\escondh}[3]{\mathbb{\widehat{E}}_{#3}\!\left[#1\;\middle\vert\;#2\right]}
\newcommand{\norm}[1]{\left\| #1 \right\|}
\newcommand{\tn}{|\kern-.1em|\kern-0.1em|}
\newcommand\be{\begin{equation}}
\newcommand\ee{\end{equation}}
\def\eps{\varepsilon}
\newcommand{\tv}[1]{\left\|#1\right\|_{\rm{TV}}}
\newcommand{\tmix}[2]{t_{\mathrm{mix}}(\epsilon, #1, #2)}
\newcommand{\tx}[1]{t_{\mathrm{mix}}(\epsilon, #1)}
\newcommand{\cps}[1]{\mathcal{P}_{\eta_0}\!\left(#1\right)}
\newcommand{\tmixx}[2]{t_{\mathrm{mix}}(#1, #2)}
\newcommand{\diam}[1]{{\rm{diam}}(#1)}
\begin{document}

\title{Mixing time for random walk on supercritical dynamical percolation}

\author{Yuval Peres\thanks{Microsoft Research, Redmond WA, U.S.A.\ \ Email:
        \hbox{peres@microsoft.com}} \and Perla Sousi\thanks{University of Cambridge, Cambridge, UK.\ \ Email: \hbox{p.sousi@statslab.cam.ac.uk}} \and
        Jeffrey E. Steif\thanks{Chalmers University of Technology
and Gothenburg University, Gothenburg, Sweden.\ \ Email:
        \hbox{steif@chalmers.se}}
}

\maketitle
\thispagestyle{empty}

\begin{abstract}
We consider dynamical percolation on the $d$-dimensional discrete torus of side length $n$, $\Z_n^d$, where each edge refreshes its status at rate $\mu=\mu_n\le 1/2$ to be open with probability $p$. We study random walk on the torus, where the walker moves at rate $1/(2d)$ along each open edge. In earlier work of two of the authors with A. Stauffer, it was shown that in the subcritical case $p<p_c(\Z^d)$, the (annealed) mixing time of the walk is $\Theta(n^2/\mu)$, and it was conjectured that in the supercritical case $p>p_c(\Z^d)$, the mixing time is $\Theta(n^2+1/\mu)$; here the implied constants depend only on~$d$ and $p$.  We prove a quenched (and hence annealed) version of this conjecture up to a poly-logarithmic factor under the assumption $\theta(p)>1/2$.  Our proof is based on percolation  results (e.g., the Grimmett-Marstrand Theorem) and an analysis of the volume-biased evolving set process; the key point is that typically, the evolving set has a substantial intersection with the giant percolation cluster at many times. This allows us to use precise isoperimetric properties of the cluster (due to G. Pete) to infer rapid growth of the evolving set, which in turn yields the upper bound on the mixing time.

 \medskip\noindent
 \emph{Keywords and phrases.} Dynamical percolation, random walk, mixing times, stopping times.
 \newline
 MSC 2010 \emph{subject classifications.}
 Primary 60K35, 60K37
  \medskip\noindent
\end{abstract}

\section{Introduction}

This paper studies random walk on dynamical percolation on the torus $\Z^d_n$. The edges refresh at rate $\mu\leq 1/2$ and switch to open with probability $p$ and closed with probability $1-p$ where $p>p_c(\Z^d)$ with~$p_c(\Z^d)$ being the critical probability for bond percolation on $\Z^d$. The random walk moves at rate $1$. When its exponential clock rings, the walk chooses one of the $2d$ adjacent edges with equal probability. If the bond is open, then it makes the jump, otherwise it stays in place. 

We represent the state of the system by $(X_t,\eta_t)$, where $X_t\in \Z_n^d$ is the location of the walk at time~$t$ and $\eta_t\in \{0,1\}^{E(\Z_n^d)}$ is the configuration of edges at time $t$, where $E(\Z_n^d)$ stands for the edges of the torus. We emphasise at this point that $(X_t,\eta_t)$ is Markovian, while the location of the walker~$(X_t)$ is not.

One easily checks that $\pi\times \pi_p$ is the unique stationary distribution
and that the process is reversible; here $\pi$ is uniform distribution and
$\pi_p$ is product measure with density $p$ on the edges. Moreover, if the environment
$\{\eta_t\}$ is fixed, then $\pi$ is a stationary distribution for the resulting time
inhomogeneous Markov process.

This model was introduced by Peres, Stauffer and Steif in~\cite{PerStaufSteif}. We emphasise that $d$ and $p$ are considered fixed, while $n$ and $\mu=\mu_n$ are the two parameters which are varying. The focus of~\cite{PerStaufSteif} was to study the total variation mixing time of $(X,\eta)$, i.e.
\[
t_{\rm{mix}}(\epsilon) = \min\left\{t\geq 0: \max_{x,\eta_0}\norm{\prstart{(X_t,\eta_t) = (\cdot, \cdot)}{x,\eta_0}-\pi\times \pi_p}_{\rm{TV}}\leq \epsilon\right\}.
\]
They focused on the subcritical regime, i.e.\ when $p<p_c$ and they proved the following: 
\begin{theorem}[\cite{PerStaufSteif}]
	For all $d\geq 1$ and $p<p_c$ the mixing time of $(X,\eta)$ satisfies
	\[
	t_{\rm{mix}}(1/4) \asymp \frac{n^2}{\mu}.
	\]
\end{theorem}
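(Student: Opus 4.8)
The plan is to establish the two matching bounds $t_{\rm{mix}}(1/4)\gtrsim n^2/\mu$ and $t_{\rm{mix}}(1/4)\lesssim n^2/\mu$ separately, after first recording that the edge environment is never the bottleneck. The configuration $\eta_t$ evolves as a product of independent two-state chains over the $\Theta(n^d)$ edges, each refreshing at rate $\mu$, so $\tv{\mathcal{L}(\eta_t)-\pi_p}$ is already small once $t\gg \mu^{-1}\log n$, which is $o(n^2/\mu)$. Moreover, if $\eta_0\sim\pi_p$ then $\eta_t\sim\pi_p$ for all $t$, and --- as noted in the excerpt --- the walk started from $\pi$ stays distributed as $\pi$ in any fixed environment; hence the pair started from $(\pi,\pi_p)$ is stationary, and for general $\eta_0$ one pays only an $O(\mu^{-1}\log n)$ burn-in to bring $\eta$ close to $\pi_p$. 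So both bounds reduce to understanding the displacement of the walk coordinate on the time scale $n^2/\mu$.

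\textbf{Lower bound.} In the subcritical phase the walk barely moves. Since $p<p_c$, the open cluster $C(0)$ of the origin has $\E{|C(0)|}<\infty$ with an exponential tail (Menshikov, Aizenman--Barsky), and so does its edge-boundary. At every instant the walker is confined to the finite open cluster of its current location, and it can leave this cluster only when one of its boundary edges refreshes to the open state, which happens at total rate $O(\mu)$ since the boundary has $O(1)$ edges typically; between two consecutive exit events the displacement of $X$ is at most the diameter of the momentary cluster, which has an exponential tail uniformly in time. Summing these contributions, and dominating the mild dependence carried by the environment's memory by a process with i.i.d.\ increments, one obtains $\E{|X_t-X_0|^2}=O(\mu t)$ with the constant depending only on $d$ and $p$. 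Hence for $t\le c\,n^2/\mu$ with $c=c(d,p)$ small enough, Chebyshev's inequality keeps $X_t$ within the first-coordinate slab of width $n/4$ around $X_0$ with probability at least $3/4$, while that slab has $\pi\times\pi_p$-measure at most $1/3$; therefore $\tv{\mathcal{L}(X_t,\eta_t)-\pi\times\pi_p}>1/4$ and $t_{\rm{mix}}(1/4)\ge c\,n^2/\mu$.

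\textbf{Upper bound.} By the reduction above it suffices, in a $\pi_p$-distributed environment, to couple the walk $X$ from an arbitrary vertex $x$ with the stationary walk $X'$ from $\pi$, both run in the \emph{same} environment with shared refresh clocks, and to show they are equal with probability close to $1$ by time $O(n^2/\mu)$. The key quantitative input is again subcriticality: each copy escapes its current finite open cluster only when one of its $O(1)$ boundary edges refreshes open, i.e.\ at rate $\Theta(\mu)$, and conditionally on such an escape the signed coordinate of the move is, with probability bounded below, the one needed to reduce the current coordinate-wise discrepancy. Running a coordinate-wise reflection coupling, the discrepancy in each coordinate then performs a lazy walk on $\Z_n$ whose effective jump rate is $\Theta(\mu)$ and which therefore reaches $0$ within time $O(n^2/\mu)$; since there are only $d$ coordinates, the two copies coalesce by time $O(n^2/\mu)$ with high probability, giving $t_{\rm{mix}}(1/4)=O(n^2/\mu)$.

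\textbf{Main obstacle.} The delicate direction is the upper bound, and within it the statement that the coupled walks make progress at rate proportional to $\mu$ \emph{uniformly} over realizations of the dynamical environment. This requires quantitative control of the escape from the current open cluster --- both that escape occurs within time $O(1/\mu)$ with probability bounded below, and that the walker cannot be trapped for too long in an atypically large (but finite) cluster, which is where the exponential cluster-size tail enters --- together with the verification that a favourable escape of one copy can be matched by the other copy with positive probability despite the two copies seeing different local environments, and a separate treatment of the (transient) periods when the two copies occupy the same cluster. Taming these time-correlations, rather than the combinatorics of subcritical percolation itself, is where the real work lies.
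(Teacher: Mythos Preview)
The present paper does not prove this theorem; it is quoted from~\cite{PerStaufSteif} and stated without proof, so there is no argument in this paper to compare your proposal against. I will therefore comment on the proposal itself and on how it relates to the approach of~\cite{PerStaufSteif}.

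Your lower-bound sketch is the right idea and matches the spirit of~\cite{PerStaufSteif}: exponential tails for subcritical clusters force $\E{\|X_t-X_0\|^2}=O(\mu t)$, and a distinguishing-statistic (slab) argument then yields $t_{\rm mix}\gtrsim n^2/\mu$.

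Your upper-bound plan, however, has a genuine gap. A coordinate-wise reflection coupling requires that when one copy attempts a step in direction $+e_i$, the other can be made to step in $-e_i$ with comparable probability; but here the two copies typically occupy disjoint finite clusters with unrelated boundary geometries, and there is no mechanism forcing their escape events --- which boundary edge opens, and in which direction the walker exits --- to be correlated, let alone anti-correlated, across the two copies. You acknowledge this in your ``Main obstacle'' paragraph, but naming the difficulty is not the same as resolving it: the assertion that ``the discrepancy in each coordinate then performs a lazy walk on $\Z_n$ whose effective jump rate is $\Theta(\mu)$'' is exactly the unproved step, and it does not follow from the ingredients you list. The argument in~\cite{PerStaufSteif} does not couple two copies of $X$ in a shared environment via reflection; rather, it builds a regeneration structure on a \emph{single} walk: over successive time intervals of length $\Theta(1/\mu)$ one shows that, with probability bounded below, the local environment fully refreshes and the resulting increment is close in law to a fixed symmetric step independent of the past. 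This reduces the problem to the $\Theta(n^2)$ mixing time of (essentially) simple random walk on $\Z_n^d$, observed at $\Theta(1/\mu)$-spaced times. If you want to salvage a coupling proof, aim to couple $X$ with an auxiliary lazy SRW through those regeneration increments, rather than with a second copy of $X$.
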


They also established the same mixing time when one looks at the walk and averages over the environment.

In the present paper we focus on the supercritical regime. We study both the full system and the quenched mixing times. We start by defining the different notions of mixing that we will be using. First of all we write $\prstart{\cdot}{x,\eta}$ for the probability measure of the walk, when the environment process is conditioned to be $\eta = (\eta_t)_{t\geq 0}$ and the walk starts from $x$. We write $\mathcal{P}$ for the distribution of the environment which is dynamical percolation on the torus, a measure on c\`adl\`ag paths $[0,\infty) \mapsto\{0,1\}^{E(\Z^d_{n})}$. We write $\mathcal{P}_{\eta_0}$ to denote the measure $\mathcal{P}$ when the starting environment is $\eta_0$. Abusing notation we write $\prstart{\cdot}{x,\eta_0}$ to mean the law of the full system when the walk starts from $x$ and the initial configuration of the environment is $\eta_0$. To distinguish it from the quenched law, we always write $\eta_0$ in the subscript as opposed to $\eta$.

 For $\epsilon \in (0,1)$, $x\in\Z_n^d$ and a fixed environment $\eta=(\eta_t)_{t\geq 0}$ we write
\[
\tmix{x}{\eta} = \min\left\{ t\geq 0:\tv{\prstart{X_t=\cdot}{x,\eta} - \pi} \leq \epsilon \right\}.
\]
We also write 
\[
\tx{\eta} = \max_x \,\tmix{x}{\eta}
\]
for the quenched $\epsilon$-mixing time.
We remark that $\tx{\eta}$ could be infinite.  Using the obvious definitions,
the standard inequality $t_{\rm{mix}}(\epsilon)\le \log_2(1/\epsilon) t_{\rm{mix}}(1/4)$ does not
hold for time-inhomogeneous Markov chains and therefore also not for quenched mixing times.
Therefore, in such situations, to describe the rate of convergence to stationarity, it
is more natural to give bounds on~$t_{\rm{mix}}(\epsilon, \eta)$ for all $\epsilon$ rather than just considering $\epsilon=1/4$ as is usually done.

We first recall the result from~\cite{quenched-sub} which is an upper bound on the quenched mixing time and the hitting time of large sets for all values of $p$. We write $\tau_A$ for the first time $X$ hits the set $A$.

\begin{theorem}[\cite{quenched-sub}]\label{thm:recall}
For all $d\ge 1$ and $\delta>0$, there exists 
$C=C(d,\delta)<\infty$ so that for all $p\in [\delta,1-\delta]$, for all $\mu \le 1/2$, for all $n$ and for all $\eps$, random walk in dynamical percolation 
on $\Z_n^d$ with parameters~$p$ and $\mu$ satisfies for all $x$
\begin{equation}\label{eq:cor:supMeanSquaredDisplacement}
\max_{\eta_0}\cps{\eta = (\eta_t)_{t\geq 0}: \,\tmix{x}{\eta}\ge \frac{Cn^2\log(1/\epsilon)}{\mu^4}}\leq
\epsilon.
\end{equation}
Moreover, for all $A\subseteq \Z_n^d$ with $|A|\geq n^d/2$ we have
\begin{align}\label{eq:hitbigset}
\nonumber\max_{\eta_0}\,&\cps{\eta=(\eta_t)_{t\geq 0}: \, \max_x\estart{\tau_A}{x,\eta} \leq \frac{Cn^2\log(n/\epsilon)}{\mu}} \leq \epsilon \text{ and}\\
&\max_{x,\eta_0} \estart{\tau_A}{x,\eta_0}\leq \frac{Cn^2}{\mu}.
\end{align}
\end{theorem}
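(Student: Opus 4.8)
I would prove \eqref{eq:hitbigset} and \eqref{eq:cor:supMeanSquaredDisplacement} by separate arguments, the first being considerably softer. Both rest on the fact that $p\in[\delta,1-\delta]$: over any time interval of length of order $1/\mu$ each of the $O(n^d)$ edges refreshes with probability $1-n^{-\omega(1)}$, so on that scale the environment the walk encounters is, up to the correlations the walk builds along its own trajectory, a fresh sample from $\pi_p$, which forces it to make of order $\mu t$ genuine moves during an interval of length $t$, whatever $p$ is and however it may be temporarily trapped. Throughout I would also use that $\pi$ is invariant for the time-inhomogeneous quenched chain and that $(X_t,\eta_t)$ is reversible with respect to $\pi\times\pi_p$.

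For \eqref{eq:hitbigset}: the plan is to combine the mean-squared displacement lower bound $\estart{\norm{X_t-x}^2}{x,\eta_0}\ge c\,(\mu t\wedge n^2)$ (proved by a coupling/renewal argument on the successive refreshes of the edges incident to $X$) with reversibility of $(X_t,\eta_t)$ and a bound on its relaxation time, $t_{\mathrm{rel}}\le Cn^2/\mu$ uniform in $p\in[\delta,1-\delta]$, $\mu\le1/2$, $n$ -- the latter via a canonical-paths / flow comparison with the chain in which all edges are permanently open (the part of the comparison that is insensitive to criticality). Standard hitting-time estimates for reversible chains then give that the annealed walk lies in any fixed $A$ with $|A|\ge n^d/2$ at time $t_\star:=Cn^2/\mu$ with probability $\ge c$, uniformly in $x,\eta_0$; a geometric-trials argument over consecutive length-$t_\star$ intervals yields the second line of \eqref{eq:hitbigset}. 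For the first (quenched) line, a block/concentration argument: over disjoint time-blocks of length of order $\log(n/\epsilon)/\mu$ the environment decorrelates sufficiently that the walk's progress across blocks behaves like a sum of almost independent increments, and concentration -- after a union bound over the $n^d$ starting points, which is where the logarithm enters -- shows that off an event of $\mathcal{P}_{\eta_0}$-probability $\le\epsilon$ the conditional law of $X$ given the environment still hits $A$ with probability bounded below in each block, whence $\max_x\estart{\tau_A}{x,\eta}\le Cn^2\log(n/\epsilon)/\mu$.

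The bound \eqref{eq:cor:supMeanSquaredDisplacement} is the harder half, and the main obstacle is that $\prstart{X_t=\cdot}{x,\eta}$ is a time-inhomogeneous chain, so the equivalence between mixing times and hitting times of large sets, and even submultiplicativity of the total-variation distance, are unavailable; one cannot simply feed \eqref{eq:hitbigset} into the usual hitting-to-mixing machinery. My plan is instead to run a volume-biased evolving-set analysis directly on the time-inhomogeneous chain, for which the invariance of $\pi$ at each instant suffices. The obstruction there is that the instantaneous chain has a degenerate conductance profile -- a vertex all of whose incident edges are momentarily closed cannot move -- so the naive $\int \mathrm{d}r/(r\,\phi(r)^2)$ estimate diverges; I would repair this by coarse-graining time into windows on which, for a ``good'' environment (every edge refreshes, which fails off probability $\le\epsilon$ once the windows have length of order $\log(n/\epsilon)/\mu$), the evolving set genuinely grows. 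Iterating this growth drives the $L^2$-distance to $\pi$ below $\epsilon$ after time $O(n^2\,\mathrm{poly}(1/\mu))$; the powers of $1/\mu$ beyond the conjectured $n^2+1/\mu$ come from the small-scale bottleneck -- an isolated closed vertex escapes only at rate of order $\mu$ -- compounded with the cost of the time-coarse-graining, and the exponent $4$ is surely not optimal. Finally, a concentration estimate playing the role of submultiplicativity produces the $\log(1/\epsilon)$ rather than $1/\epsilon$ dependence, completing \eqref{eq:cor:supMeanSquaredDisplacement}.
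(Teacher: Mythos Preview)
This theorem is not proved in the present paper; it is quoted from the companion work~\cite{quenched-sub} (as the theorem header indicates), so there is no proof here against which to compare your proposal. The paper uses the statement only as a black box, chiefly the second line of~\eqref{eq:hitbigset} inside the proof of Proposition~\ref{pro:hittinggiant}.

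On the substance of your sketch: the evolving-set route you outline for~\eqref{eq:cor:supMeanSquaredDisplacement} is natural and is consistent with the time-inhomogeneous evolving-set machinery developed in Section~\ref{sec:evolve} of this paper. One step that would require genuine work is your claimed relaxation-time bound $t_{\mathrm{rel}}\le Cn^2/\mu$ for the joint chain $(X_t,\eta_t)$ via a canonical-paths comparison with the fully-open walk: a direct conductance comparison degenerates because closed edges carry zero conductance, so any flow must route through the environment coordinates (i.e.\ wait for refreshes), and extracting the correct $\mu$-dependence from that is not automatic. By contrast, your displacement-plus-geometric-trials argument for the annealed line of~\eqref{eq:hitbigset} is quite plausible and is morally how the result is used downstream. (Incidentally, the first displayed line of~\eqref{eq:hitbigset} as printed has the inequality the wrong way round---the intended statement is that the quenched expected hitting time is small with high probability---and you have read it in the intended direction.)
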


Our first result concerns the quenched mixing time in the case when $\theta(p)>1/2$. 

\begin{theorem}\label{thm:qmixlarge}
	Let $p\in (p_c(\Z^d), 1)$ with $\theta(p)>1/2$ and $\mu\leq 1/2$. Then there exists $a>0$ (depending only on $d$ and $p$) so that for all $n$ sufficiently large we have
	\[
	\sup_{\eta_0}\cps{\eta= (\eta_t)_{t\geq 0}: \, t_{\rm{mix}}\left(n^{-3d}, \eta\right) \geq (\log n)^a \left( n^2 + \frac{1}{\mu}\right) } \leq \frac{1}{n^d}.
	\]
\end{theorem}

\begin{remark}\label{rem:13}
\rm{
We note that when $1/\mu<(\log n)^b$ for some $b>0$, then Theorem~\ref{thm:qmixlarge} follows from Theorem~\ref{thm:recall}. (Take $\epsilon=n^{-3d}$ in~\eqref{eq:cor:supMeanSquaredDisplacement} and do a union bound over $x$.) So we are going to prove Theorem~\ref{thm:qmixlarge} in the regime when $1/\mu> (\log n)^{d+2}$.}	
\end{remark}

Our second result concerns the mixing time at a stopping time in the quenched regime for all values of $p>p_c(\Z^d)$. We first give the definition of this notion of mixing time that we are using.


\begin{definition}\rm{
For $\epsilon\in (0,1)$ and a fixed environment $\eta=(\eta_t)_{t\geq 0}$ we define
\[
\tstop(\epsilon,\eta) = \max_{x} \inf\left\{ \estart{T}{x,\eta}: \, T \text{ randomised stopping time s.t.} \ \norm{\prstart{X_T=\cdot}{x,\eta} - \pi}_{\rm{TV}}  \leq \epsilon   \right\}.
\]
}	
\end{definition}

\begin{theorem}\label{thm:qmixcesaro}
Let $p\in (p_c(\Z^d),1)$, $\epsilon>0$ and $\mu\leq 1/2$. Then there exists $a>0$ (depending only on $d$ and $p$) so that for all $n$ sufficiently large we have 
\[
\inf_{\eta_0} \mathcal{P}_{\eta_0}\left( \eta=(\eta_t)_{t\geq 0}: \, \tstop(\epsilon,\eta) \leq (\log n)^a \left(n^2+\frac{1}{\mu} \right)   \right) = 1-o(1).
\]
\end{theorem}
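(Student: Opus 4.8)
The plan is to reduce the stopping-time mixing statement for all $p>p_c(\Z^d)$ to the already-established quenched estimates of Theorem~\ref{thm:recall} together with the case $\theta(p)>1/2$ handled in Theorem~\ref{thm:qmixlarge}, by exploiting the monotonicity of dynamical percolation in $p$ and a renormalisation (coarse-graining) argument. Concretely, fix $p>p_c(\Z^d)$. By the Grimmett--Marstrand theorem we may choose a large integer $K=K(d,p)$ such that percolation with parameter $p$ restricted to a slab of thickness $K$ percolates, and more importantly such that, after partitioning $\Z^d_n$ into blocks of side $K$, the induced block process dominates a supercritical site percolation with a density $\theta$-analogue exceeding $1/2$; this is the standard device for boosting an arbitrary supercritical $p$ to one with large survival probability at the renormalised scale. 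The point is that a random walk on the original dynamical percolation, observed at the block scale, behaves (up to $O(K^2)$ time and space rescaling and up to controllable error events of probability decaying in $n$) like a random walk on a dynamical percolation whose effective occupation parameter has $\theta>1/2$, so Theorem~\ref{thm:qmixlarge} applies to the block chain.

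First I would set up the coarse-grained environment and show the quenched good event has probability $1-o(1)$: by a union bound over the $O(n^d)$ blocks and Grimmett--Marstrand/Liggett--Schonmann--Stacey domination, outside an event of $\mathcal P_{\eta_0}$-probability $o(1)$ the block environment at all relevant times lies in the regime to which Theorem~\ref{thm:qmixlarge} (with $n$ replaced by $n/K$) can be applied, and in addition the walk does not fall into a ``bad'' region (a small non-giant component) for too long --- here one uses the second display \eqref{eq:hitbigset} of Theorem~\ref{thm:recall}, which bounds the expected hitting time of any set of density $\ge 1/2$ by $Cn^2/\mu$, to guarantee that from any starting point the walk reaches the giant (the good block region) in expected time $O(n^2/\mu)$ uniformly. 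Next I would construct the randomised stopping time $T$ in two stages: $T_1$, a stopping time carrying the walk into a uniformly-distributed site of the giant cluster region, which by \eqref{eq:hitbigset} and the strong-Markov/filling-rule argument has $\estart{T_1}{x,\eta}=O(n^2/\mu)\cdot(\log n)^{O(1)}$; then $T_2$, obtained from Theorem~\ref{thm:qmixlarge} applied at the block scale, which produces from the block-chain a stopping time achieving total-variation distance $\le\epsilon$ to $\pi$ with quenched expectation $(\log n)^{a}(n^2+1/\mu)$ after undoing the $K^2$ rescaling (note $K$ is a constant, so $K^2$ is absorbed into $(\log n)^a$). I would then invoke the standard equivalence between $\epsilon$-mixing in total variation and the existence of a randomised stopping time with $\prstart{X_T\in\cdot}{x,\eta}$ within $\epsilon$ of $\pi$ together with small expectation, i.e.\ the characterisation of $\tstop$; passing from a $T$ that mixes to within $\epsilon/2$ on the giant and correcting the $o(1)$ mass on non-giant blocks costs only a further additive $O(n^2/\mu)$ and at most doubles $\epsilon$.

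The main obstacle I anticipate is making the coarse-graining interact cleanly with the \emph{dynamics}: the block process is not itself a Markovian dynamical percolation with a clean refresh rate, because a block's state depends on the joint configuration of $\Theta(K^d)$ edges whose refreshes are not simultaneous, so the block occupation process has correlated, non-Markovian updates. Handling this requires either (i) a coupling in which the block chain is sandwiched between two genuine dynamical percolations whose parameters have $\theta>1/2$ and whose refresh rates are $\Theta(\mu)$ --- using that a block is open once a suitable ``crossing configuration'' of its internal edges is present, an event that is monotone and whose indicator's relaxation time is $\Theta(1/\mu)$ --- or (ii) arguing directly that the isoperimetric/evolving-set machinery underlying Theorem~\ref{thm:qmixlarge} is robust to this mild non-Markovianity, since all that is really used is that the walk's conductance profile on the giant is good at a positive fraction of times. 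I would pursue route (i): show the coarse-grained walk is dominated above and below by walks on honest supercritical-with-$\theta>1/2$ dynamical percolations, apply Theorem~\ref{thm:qmixlarge} to those, and transfer the stopping-time bound via the sandwich. The remaining bookkeeping --- the $K^2$ spatial rescaling, the union bounds giving $o(1)$, and the concatenation $T=T_1+T_2$ with the filling rule --- is routine.
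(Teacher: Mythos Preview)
Your approach has two gaps that look fatal as stated. First, the hitting-time stage: you invoke~\eqref{eq:hitbigset} to bound $\estart{T_1}{x,\eta}$ by $O(n^2/\mu)$ up to polylogs, but $n^2/\mu$ is \emph{not} $O\bigl((\log n)^a(n^2+1/\mu)\bigr)$ --- take $\mu=n^{-1}$, so $n^2/\mu=n^3$ while $n^2+1/\mu\asymp n^2$. The estimate~\eqref{eq:hitbigset} is the generic bound valid for all $p$ and is tight subcritically; in the supercritical regime the paper proves and uses the much sharper Proposition~\ref{pro:hittinggiant}, giving $O((\log n)^{3d+8}/\mu)$ for reaching the giant. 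As written your $T_1$ already exceeds the budget.

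Second, and more structurally, the coarse-graining does not produce an object to which Theorem~\ref{thm:qmixlarge} applies. You correctly flag that the block \emph{environment} is non-Markovian, but the block \emph{walk} is worse: the position of $X$ read at the block scale is not a nearest-neighbour simple random walk on $\Z_{n/K}^d$ moving at rate $1/(2d)$ along open block-edges. Its holding time in a block and its exit distribution depend on the full internal configuration and on the entry point, so Theorem~\ref{thm:qmixlarge} --- a statement about simple random walk on genuine dynamical percolation --- is simply not about this process. Your sandwich route (i) would require a monotone coupling of the \emph{walks}, not just of the environments, and none is available (more open edges do not in general imply faster mixing). Route (ii) amounts to redoing the whole evolving-set analysis for the block chain, which is no reduction.

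The paper avoids renormalisation altogether. The evolving-set growth estimate, Proposition~\ref{pro:90g}, already holds for every $p>p_c$: in a $\delta$-good environment the Doob-transformed evolving set reaches $|S_{\tau_\delta}\cap\GG_{\tau_\delta}|\ge(1-\delta)|\GG_{\tau_\delta}|$ within time $t(n)$. The assumption $\theta(p)>1/2$ in Theorem~\ref{thm:qmixlarge} was used only at the very end, to say that one giant already carries $\pi$-mass above $1/2$. For general $p>p_c$ the paper instead iterates: wait $O(\log n/\mu)$ for all edges to refresh, grow a fresh evolving set until it covers $(1-\tilde\delta)$ of the current giant, and repeat $k=\Theta(1/(\delta\theta(p)))$ times. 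By Corollary~\ref{cor:manyperc} the union of $k$ independent giants covers $(1-\delta)n^d$ with overwhelming probability, hence so does $S_{\tau_1}\cup\cdots\cup S_{\tau_k}$. The Diaconis--Fill coupling then gives $\prstart{X_{\tau_i}=x}{x_0,\eta}=\estart{\1(x\in S_{\tau_i})/|S_{\tau_i}|}{x_0,\eta}$, so picking $i\in\{1,\dots,k\}$ uniformly puts mass at least $1/(kn^d)$ on a $(1-O(\delta))$-fraction of vertices; a rejection/filling rule over $O(k)$ further rounds yields the randomised stopping time $T$ with $\|\prstart{X_T\in\cdot}{x_0,\eta}-\pi\|_{\rm TV}\le\epsilon$ and $\estart{T}{x_0,\eta}\le ck(\log n)\,t(n)$. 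This is the idea you are missing: rather than boosting $\theta$ via blocks, one takes several independent copies of the giant in time and uses that their union is large.
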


%
%
%

Finally we give a consequence for random walk on dynamical percolation on all of~$\Z^d$. This is defined analogously to the process on the torus $\Z_n^d$ above, where the edges refresh at rate $\mu$.

\begin{corollary}\label{cor:hittingtimes}
Let $p\in (p_c(\Z^d),1)$ and $\mu\leq 1/2$. Let $X$ be the random walk on dynamical percolation on $\Z^d$ and  for $r>0$ let
\[
\tau_r = \inf\{t\geq 0: \, \norm{X_t}\geq r\}.
\]	
Then there exists $a>0$ (depending only on $d$ and $p$) so that for all $r$ 
\[
\sup_{\eta_0}\estart{\tau_r}{0,\eta_0} \leq \left(r^2+\frac{1}{\mu} \right)(\log r)^a.
\] 
\end{corollary}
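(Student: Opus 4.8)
The strategy is to transfer Theorem~\ref{thm:qmixcesaro} from the torus to $\Z^d$ by a coupling. Since all norms on $\R^d$ are equivalent we may assume $\norm{\cdot}=\norm{\cdot}_\infty$, and it suffices to treat $r\ge r_0$ for a constant $r_0=r_0(d)$: for $r\le r_0$ one has $\estart{\tau_r}{0,\eta_0}\le\estart{\tau_{r_0}}{0,\eta_0}$, and for such bounded $r$ the term $1/\mu$ already dominates the right-hand side. Fix $r\ge r_0$ and set $n=C_dr$ with $C_d$ large enough that the ball $B:=\{y:\norm{y}_\infty\le r\}$, which embeds injectively in $\Z_n^d$ since $2r<n$, satisfies $\pi(B)\le 1/2$.

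First I would build a coupling of the walk on dynamical percolation on $\Z^d$ started from $(0,\eta_0)$ with the walk on dynamical percolation on $\Z_n^d$ started from the image of $0$: use a common walk clock and common choices among incident edges, and run the \emph{same} edge-refresh dynamics on the edges inside $B$, with the initial torus configuration chosen to agree with $\eta_0$ on $B$ and arbitrary elsewhere. This is legitimate because dynamical percolation refreshes each edge independently, so its restriction to the edge set of $B$ is again dynamical percolation on that set. Under this coupling the two walks make identical moves for as long as the $\Z^d$-walk stays in $\{\norm{\cdot}_\infty\le r-1\}$ (when at such a vertex every incident edge lies in $B$, and the step at which $\norm{\cdot}_\infty$ first reaches $r$ is itself such a step); since $r<n/2$ the torus-$\ell^\infty$-distance from the origin agrees with the $\Z^d$-distance on $B$, and a short argument then shows that, under the coupling, the first time the torus walk is at torus-distance $\ge r$ from the origin equals $\tau_r$ for the $\Z^d$-walk. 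In particular $\{X^{\mathrm{tor}}_s\notin B\}\subseteq\{\tau_r\le s\}$ for every $s$, where $X^{\mathrm{tor}}$ denotes the torus walk.

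The core estimate is: there is $\hat T=\hat T(r)$ with $\hat T\lesssim(\log r)^a(r^2+1/\mu)$ such that for every $z$ with $\norm{z}_\infty\le r$ and every environment $\eta_0$ on $\Z^d$,
\[
\prstart{\exists\,t\le\hat T:\ \norm{X_t}_\infty\ge r}{z,\eta_0}\ \ge\ \tfrac1{16}.
\]
Since $\norm{X_t}_\infty\ge\norm{X_t-z}_\infty-r$, it is enough to bound $\prstart{\exists\,t\le\hat T:\ \norm{X_t-z}_\infty\ge 2r}{z,\eta_0}$ from below, and by translation invariance we may take $z=0$ and bound $\prstart{\exists\,t\le\hat T:\ \norm{X_t}_\infty\ge 2r}{0,\eta_0}$ uniformly over $\eta_0$. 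Run the coupling of the previous paragraph with $n'=2C_dr$ and $B'=\{\norm{\cdot}_\infty\le 2r\}$ (so $\pi'(B')\le1/2$). By Theorem~\ref{thm:qmixcesaro} with $\epsilon=1/4$, for $r$ large the torus environment $\eta$ is, with probability $1-o(1)\ge1/2$, such that there is a randomised stopping time $T_0$ of the torus walk from the origin with $\E{T_0}\le(\log n')^a(n'^2+1/\mu)=:\tilde T$ and $\tv{\prstart{X_{T_0}=\cdot}{0,\eta}-\pi'}\le1/4$; then $\pr{X_{T_0}\notin B'}\ge 1-\pi'(B')-1/4\ge1/4$ and, by Markov's inequality, $\pr{X_{T_0}\notin B',\ T_0\le8\tilde T}\ge1/8$. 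Pushing $\{X_{T_0}\notin B',\ T_0\le8\tilde T\}\subseteq\{\exists\,t\le8\tilde T:\ \norm{X_t}_\infty\ge 2r\}$ through the coupling and multiplying by the probability of a good environment yields the estimate with $\hat T=8\tilde T\lesssim(\log r)^a(r^2+1/\mu)$. A restart argument over consecutive time blocks of length $\hat T$ then concludes: on $\{\tau_r>(k-1)\hat T\}$ one has $\norm{X_{(k-1)\hat T}}_\infty\le r-1$, so by the Markov property of $(X,\eta)$ and the core estimate $\pr{\tau_r>k\hat T}\le\tfrac{15}{16}\,\pr{\tau_r>(k-1)\hat T}$, whence $\estart{\tau_r}{0,\eta_0}\le\sum_{k\ge0}\hat T\,(15/16)^k=16\hat T\lesssim(\log r)^a(r^2+1/\mu)$; enlarging $a$ absorbs the constant and reverting to the original norm costs only a bounded factor in $r$.

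The only real care is needed in the coupling step: synchronising the two environments on the ball $B$ (immediate from the independence of the edge refreshes), choosing $n\asymp r$ with $\pi(B)\le1/2$, and checking that the randomised stopping time supplied by Theorem~\ref{thm:qmixcesaro} transfers faithfully through the coupling so as to control $\tau_r$ for the walk on $\Z^d$. None of this is deep — the corollary is essentially a repackaging of Theorem~\ref{thm:qmixcesaro} — and the only mild annoyances are the ``$n$ sufficiently large'' hypothesis (handled by treating bounded $r$ separately) and the bookkeeping of the constants and of the exponent $a$.
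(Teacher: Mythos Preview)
Your proposal is correct and follows essentially the same approach as the paper: set $n\asymp r$, couple the $\Z^d$ walk with the torus walk, invoke Theorem~\ref{thm:qmixcesaro} to get a positive annealed probability of exiting the $r$-ball within time $(\log n)^a(n^2+1/\mu)$, and iterate. The paper's proof is extremely terse (it simply asserts the core estimate $\prstart{\exists\,t\le(n^2+1/\mu)(\log n)^a:\norm{X_t}\ge r}{x,\eta_0}\ge 1/2$ and says ``iterate''), whereas you spell out the coupling, the Markov inequality on the stopping time, the reduction via translation invariance to displacement $2r$ from the current position, and the geometric-decay restart argument. All of this is sound.

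One minor remark: your treatment of the bounded-$r$ case is not quite airtight, since $\estart{\tau_{r_0}}{0,\eta_0}\le(r_0^2+1/\mu)(\log r_0)^a$ does not automatically imply the bound $(r^2+1/\mu)(\log r)^a$ for $r<r_0$ (the factor $(\log r)^a$ can be smaller than $(\log r_0)^a$). The paper's proof has the same imprecision---it only establishes the estimate for $n$ (hence $r$) large enough---so this is not a defect of your argument relative to the paper; the statement of the corollary is really intended for large $r$, and a clean fix would be to invoke Theorem~\ref{thm:recall} directly for bounded $r$ and absorb the resulting constant.
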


\emph{Notation} For positive functions $f,g$ we write $f\sim g$ if $f(n)/g(n)\to 1$ as $n\to\infty$. We also write $f(n) \lesssim g(n)$ if there exists a constant $c \in (0,\infty)$ such that $f(n) \leq c g(n)$ for all $n$,
and $f(n) \gtrsim g(n)$ if $g(n) \lesssim f(n)$.  Finally, we use the notation $f(n) \asymp g(n)$ if both $f(n) \lesssim g(n)$ and $f(n) \gtrsim g(n)$. 

\textbf{Related work}
Various references to random walk on dynamical percolation has been provided in~\cite{PerStaufSteif}. In a very recent paper, Andres, Chiarini, Deuschel and Slowik~\cite{SebAn} have obtained a quenched invariance principle for random walks with time-dependent ergodic degenerate weights. Their framework though does not cover dynamical percolation, since the conductances are assumed to take strictly positive values.

\subsection{Overview of the proof}

In this subsection we explain the high level idea of the proof and also give the structure of the paper. First we note that when we fix the environment to be~$\eta$, we obtain a time inhomogeneous Markov chain. To study its mixing time, we use the theory of evolving sets developed by Morris and Peres adapted to the inhomogeneous setting, which was done in~\cite{quenched-sub}. We recall this in Section~\ref{sec:evolve}. 
 In particular we state a theorem by Diaconis and Fill that gives a coupling of the chain with the Doob transform of the evolving set. (Diaconis and Fill proved it in the time homogeneous setting, but the adaptation to the time inhomogeneous setting is straightforward.) The importance of the coupling is that conditional on the Doob transform of the evolving set up to time $t$, the random walk at time $t$ is uniform on the Doob transform at the same time. This property of the coupling is going to be crucial for us in the proofs of Theorems~\ref{thm:qmixlarge} and~\ref{thm:qmixcesaro}. 

The size of the Doob transform of the evolving set in the inhomogeneous setting is again a submartingale, as in the homogeneous case. The crucial quantity we want to control is the amount by which its size increases. This increase will be large only at \emph{good times}, i.e.\ when the intersection of the Doob transform of the evolving set with the giant cluster is a substantial proportion of the evolving set. Hence we want to ensure that there are enough \emph{good times}.  We would like to emphasise that in this case we are using the random walk to infer properties of the evolving set. More specifically, in Section~\ref{sec:hitgiant} we give an upper bound on the time it takes the random walk to hit the giant component. Using this and the coupling of the walk with the evolving set, in Section~\ref{sec:goodtimes} we establish that there are enough good times.
We then employ a result of G\'abor Pete which states that the isoperimetric profile of a set in a supercritical percolation cluster coincides with its lattice profile. We apply this result to the sequence of good times, and hence obtain a good drift for the size of the evolving set at these times.

We conclude Section~\ref{sec:goodtimes} by giving an upper bound that there exists a stopping time bounded by  with high probability so that at this time the Doob transform of the evolving set has size at least $(1-\delta) (\theta(p)-\delta) n^d$. In the case when $\theta(p)>1/2$ we can take $\delta>0$ sufficiently small so that~$(1-\delta) (\theta(p)-\delta)>1/2$. Using the uniformity of the walk on the Doob transform of the evolving set again, we deduce that at this stopping time the walk is close to the uniform distribution in total variation with high probability. This yields Theorem~\ref{thm:qmixlarge}.

To finish the proof of Theorem~\ref{thm:qmixcesaro} the idea is to repeat the above procedure to obtain $k$ sets whose union covers at least $1-\delta$ of the whole space. Then we define $\tau$ by choosing one of these times uniformly at random. At time $\tau$ the random walk will be uniform on a set with measure at least~$1-\delta$, and hence this means that the total variation from the uniform distribution at this time is going to be small. Since this time is with high probability smaller than $k$ times the mixing time, this finishes the proof.

 \section{Evolving sets for inhomogeneous Markov chains}\label{sec:evolve}

 In this section we give the definition of the evolving set process for a discrete time inhomogeneous Markov chain.
 
Given a general transition matrix $p(\cdot, \cdot)$ with state space $\Omega$ and stationary distribution $\pi$ we let for~$A,B\subseteq \Omega$
\begin{equation}\label{eq:DefnQ}
Q_{p}(A,B):=\sum_{x\in A,y\in B} \pi(x)p(x,y).
\end{equation}
When $B=\{y\}$ we simply write $Q_p(A,y)$ instead of $Q_p(A,\{y\})$.

We first recall the definition of evolving sets in the context of a finite state discrete time Markov chain with state space $\Omega$, transition matrix $p(x,y)$ and stationary 
distribution $\pi$.  The evolving-set process $\{S_n\}_{n\ge 0}$ is a Markov chain
on subsets of $\Omega$ whose
transitions are described as follows. Let $U$ be
a uniform random variable on $[0,1]$. If $S\subseteq \Omega$ is the present state, we let the next 
state $\til{S}$ be defined by
$$
\til{S}:=\left\{y\in \Omega: \frac{Q_p(S,y)}{\pi(y)}\ge U\right\}.
$$
We remark that $Q_p(S,y)/\pi(y)$ is the probability that the reversed chain starting at $y$ is in $S$ after one step.
Note that $\Omega$ and $\emptyset$ are absorbing states and it is immediate to check that
\begin{equation}\label{eq:ES1d}
\prcond{y\in S_{k+1}}{S_k}{}=\frac{Q_p(S_k,y)}{\pi(y)}.
\end{equation}
Moreover, one can describe the evolving set process as that process on subsets which 
satisfies the ``one-dimensional marginal'' condition~\eqref{eq:ES1d} and where these 
different events, as we vary $y$, are maximally coupled.

For a transition matrix $p$ with stationary distribution $\pi$ we define for $S$ with $\pi(S)> 0$
\begin{align*}
\phi_{p}(S):=\frac{Q_{p}(S,S^c)}{\pi(S)} \quad \text{ and } \quad \psi_{p}(S):=1-\E{\sqrt{\frac{\pi(\til{S})}{\pi(S)}}},
\end{align*}
where $\til{S}$ is the first step of the evolving set process started from $S$ when the transition probability for the Markov chain is $p$ and as always the stationary distribution is $\pi$.

For $r\in [\min_x\pi(x), 1/2]$ we define $\psi_p(r) :=\inf\{\psi_p(S): \pi(S)\leq r\}$ and $\psi_p(r)=\psi_p(1/2)$ for $r\geq 1/2$. We define $\phi_p(r)$ analogously. We now recall a lemma from Morris and Peres~\cite{MorrisPeres} that will be useful later.

\begin{lemma}[{\cite[Lemma~10]{MorrisPeres}}]\label{lem:phipsi}
	Let $0<\gamma\leq 1/2$ and let $p$ be a transition matrix on the finite state space $\Omega$ with $p(x,x)\geq \gamma$ for all $x$. Let $\pi$ be a stationary distribution. Then for all sets $S\subseteq \Omega$ with $\pi(S)>0$ we have
	$$
1-\psi_{p}(S)\le 1-\frac{\gamma^2}{2(1-\gamma)^2}\cdot (\varphi_{p}(S))^2.
$$
\end{lemma}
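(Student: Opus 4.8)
The plan is to prove the equivalent inequality $\psi_p(S)\ge\frac{\gamma^2}{2(1-\gamma)^2}\phi_p(S)^2$, following the argument of Morris and Peres, in three steps. \emph{Step 1 (reduction to a variance-type estimate).} Set $Z:=\pi(\til{S})/\pi(S)$, where $\til{S}$ is one step of the evolving set process from $S$. By \eqref{eq:ES1d}, $\E{\pi(\til{S})}=\sum_{y}\pi(y)\,\frac{Q_p(S,y)}{\pi(y)}=Q_p(S,\Omega)=\pi(S)$, so $\E Z=1$; and the elementary identity $1-\sqrt z=\tfrac12(1-\sqrt z)^2+\tfrac12(1-z)$ gives, after taking expectations,
\[
\psi_p(S)=1-\E{\sqrt Z}=\tfrac12\,\E{(1-\sqrt Z)^2}.
\]
It therefore suffices to show $\E{(1-\sqrt Z)^2}\ge a^2$ and $a\ge\frac{\gamma}{1-\gamma}\phi_p(S)$, where $a:=\E{(Z-1)^+}=\E{(1-Z)^+}$ (the two coincide because $\E Z=1$).

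\emph{Step 2 (laziness localises the flow $Q_p(S,S^c)$).} Write $q(y):=Q_p(S,y)/\pi(y)$, so that $\til{S}=\{y:q(y)\ge U\}$ for $U$ uniform on $[0,1]$ and $\pr{y\in\til{S}}=q(y)$. Since $p(x,x)\ge\gamma$ (equivalently the time-reversal holds at every state with probability $\ge\gamma$), we get $q(y)\ge\gamma$ for $y\in S$ and $q(y)\le 1-\gamma$ for $y\in S^c$. With $\til{S}_u:=\{y:q(y)\ge u\}$ and $g(u):=\pi(\til{S}_u\cap S^c)$: for $u\le\gamma$ we have $S\subseteq\til{S}_u$, so $\pi(\til{S}_u)-\pi(S)=g(u)\ge0$; and $g$ vanishes off $[0,1-\gamma]$ with $\int_0^{1-\gamma}g(u)\,du=\sum_{y\in S^c}\pi(y)q(y)=Q_p(S,S^c)$. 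As $g$ is nonnegative and non-increasing and $\gamma\le 1-\gamma$, its average over $[0,\gamma]$ dominates its average over $[0,1-\gamma]$, whence
\[
\E{(\pi(\til{S})-\pi(S))^+}\ \ge\ \int_0^{\gamma}g(u)\,du\ \ge\ \frac{\gamma}{1-\gamma}\,Q_p(S,S^c)=\frac{\gamma}{1-\gamma}\,\phi_p(S)\,\pi(S),
\]
and dividing by $\pi(S)$ yields $a\ge\frac{\gamma}{1-\gamma}\phi_p(S)$.

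\emph{Step 3 (the real-variable inequality).} To prove $\E{(1-\sqrt Z)^2}\ge a^2$ I would split at $Z=1$ and apply Cauchy--Schwarz on each part, retaining the masses $r:=\pr{Z\le1}$ and $s:=\pr{Z>1}=1-r$. On $\{Z\le1\}$ one has $a=\E{(1-\sqrt Z)(1+\sqrt Z)\1_{\{Z\le1\}}}$, so $a^2\le\E{(1-\sqrt Z)^2\1_{\{Z\le1\}}}\cdot\E{(1+\sqrt Z)^2\1_{\{Z\le1\}}}$; using $(1+\sqrt Z)^2\le 2(1+Z)$ (AM--GM) and $\E{Z\1_{\{Z\le1\}}}=1-\E{Z\1_{\{Z>1\}}}=r-a$, the last factor is $\le 2(2r-a)$, giving $\E{(1-\sqrt Z)^2\1_{\{Z\le1\}}}\ge\frac{a^2}{2(2r-a)}$. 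Symmetrically $\E{(1-\sqrt Z)^2\1_{\{Z>1\}}}\ge\frac{a^2}{2(a+2s)}$. The two denominators $2(2r-a)$ and $2(a+2s)$ sum to $4$, hence have product $\le4$ by AM--GM, so
\[
\E{(1-\sqrt Z)^2}\ \ge\ \frac{a^2}{2(2r-a)}+\frac{a^2}{2(a+2s)}\ =\ \frac{4a^2}{2(2r-a)\cdot 2(a+2s)}\ \ge\ a^2 .
\]
Combining the three steps, $\psi_p(S)=\tfrac12\E{(1-\sqrt Z)^2}\ge\tfrac12 a^2\ge\frac{\gamma^2}{2(1-\gamma)^2}\phi_p(S)^2$, which is the claim.

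Step 1 is a one-line computation, and Step 2 only uses that laziness keeps the reversed return probabilities $q(y)$ bounded away from $0$ on $S$ and from $1$ on $S^c$, so that a definite fraction of the flow $Q_p(S,S^c)$ is realised at levels $u\le\gamma$, where it contributes to the positive part of $\pi(\til{S})-\pi(S)$. The delicate point — and the step I expect to be the main obstacle — is obtaining the \emph{sharp} constant in Step 3: a crude estimate (for instance $1-\sqrt Z\ge\tfrac12(1-Z)$ together with plain Cauchy--Schwarz) loses a factor of $4$ and only yields $\psi_p(S)\gtrsim\big(\tfrac{\gamma}{1-\gamma}\phi_p(S)\big)^2$ up to an absolute constant, whereas recovering the factor $\tfrac12$ requires keeping the weights $r,s$ in the two Cauchy--Schwarz applications and exploiting that the resulting denominators add up to $4$. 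That $\tfrac12$ is optimal can be seen by taking $Z$ to be a symmetric two-point distribution concentrated near $1$, for which every inequality used above becomes asymptotically an equality.
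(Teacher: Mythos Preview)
The paper does not prove this lemma; it is quoted verbatim as \cite[Lemma~10]{MorrisPeres} with no argument given. Your proof is correct and is essentially a clean repackaging of the Morris--Peres argument: Step~1 is the standard identity reducing $\psi_p(S)$ to $\tfrac12\E{(1-\sqrt Z)^2}$, Step~2 is exactly the level-set computation used there to extract the factor $\gamma/(1-\gamma)$ from laziness, and Step~3 recovers the sharp constant via the two-sided Cauchy--Schwarz split. All steps check; in particular, the averaging inequality $\frac{1}{\gamma}\int_0^\gamma g\ge\frac{1}{1-\gamma}\int_0^{1-\gamma}g$ for non-increasing $g$ and the arithmetic $x+y=4\Rightarrow xy\le4$ with $x=2(2r-a)$, $y=2(a+2s)$ are both valid (the degenerate cases $a=0$ or $r=0$, $s=0$ being trivial).
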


We next define completely analogously to the time homogeneous case the evolving set process in the context of a time inhomogeneous Markov chain with a stationary distribution~$\pi$. Consider a time inhomogeneous Markov chain with state space
$\S$ whose transition matrix for moving from time $k$ to time $k+1$ 
is given by $p_{k+1}(x,y)$ where we assume that the 
probability measure $\pi$ is a stationary distribution for each $p_k$. 
In this case, we say that~$\pi$ is a stationary distribution for the inhomogeneous Markov chain.
Let~$Q_k= Q_{p_k}$ be as defined 
in~\eqref{eq:DefnQ}. We then obtain a time 
inhomogeneous Markov chain $S_0,S_1,\ldots$ on subsets of $\S$ generated by 
$$
{S}_{k+1}:=\left\{y\in \S: \frac{Q_{k+1}(S_k,y)}{\pi(y)}\ge U\right\}
$$
where $U$ is as before a uniform random variable on $[0,1]$. We call this the evolving set process
with respect to $p_1,p_2,\ldots$ and stationary distribution $\pi$. 

We now define the Doob transform of the evolving set process associated to a time inhomogeneous Markov chain. If $K_p$ is the transition probability for the evolving set process when the transition matrix for the Markov chain is $p$, then we define the Doob transform with respect to being absorbed at $\Omega$ via
\[
\widehat{K}_p(S,S') = \frac{\pi(S')}{\pi(S)} K_p(S,S').
\] 
The following coupling of the time inhomogeneous Markov chain with the Doob transform of the evolving set will be crucial in the rest of the paper. The proof is identical to the proof of the homogeneous setting by Diaconis and Fill~\cite{DiaconisFill}. For the proof see for instance~\cite[Theorem~17.23]{LevPerWil}.

\begin{theorem}\label{thm:coupling}
	Let $X$ be a time inhomogeneous Markov chain. Then there exists a Markovian coupling of $X$ and the Doob transform $(S_t)$ of the associated evolving sets so that for all starting points $x$ and all times~$t$ we have $X_0=x$, $S_0=\{x\}$ and for all $w$
	\[
	\prcond{X_t=w}{S_0,\ldots, S_t}{x} = \1(w\in S_t)\cdot \frac{\pi(w)}{\pi(S_t)}.
	\]
\end{theorem}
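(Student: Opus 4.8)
The plan is to lift the classical Diaconis--Fill coupling to the time inhomogeneous setting by induction on $t$, checking that none of the steps in the homogeneous argument uses time homogeneity. First I would recall the precise mechanism: at each step $k+1$ we use the \emph{same} uniform variable $U_{k+1}$ to generate $S_{k+1}$ from $S_k$ via the evolving set rule with kernel $Q_{k+1}$, and we want to couple this with a move of $X$ so that, conditionally on $(S_0,\dots,S_{k+1})$, the walker $X_{k+1}$ is uniform on $S_{k+1}$ weighted by $\pi$. The base case $t=0$ is trivial: $S_0=\{x\}$, $X_0=x$, and $\pi(x)/\pi(S_0)=1$.

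For the inductive step, assume $\prcond{X_k=w}{S_0,\dots,S_k}{x}=\1(w\in S_k)\pi(w)/\pi(S_k)$. I would first run the evolving set chain one step to get $S_{k+1}$ (using $U_{k+1}$), and then prescribe the conditional law of $X_{k+1}$ given $(S_0,\dots,S_{k+1})$ and $X_k$. The key identity, exactly as in the homogeneous case, is that one can choose a coupling such that
\[
\prcond{X_{k+1}=y}{S_0,\dots,S_{k+1},\,X_k=w}{x}=\frac{\pi(w)\,p_{k+1}(w,y)}{Q_{k+1}(S_k,y)}\cdot\frac{Q_{k+1}(S_k,y)}{\pi(y)}\cdot\frac{\pi(S_k)}{\pi(S_{k+1})}\cdot\text{(correction)},
\]
but rather than reproduce the formula I would instead argue structurally: the joint law of $(X_{k+1},S_{k+1})$ given $S_k$ and $X_k=w$ is constructed so that (i) the $S_{k+1}$-marginal is the evolving-set step, and (ii) for each fixed target state $y$, $X_{k+1}=y$ forces $y\in S_{k+1}$, and the pair $(X_{k+1},S_{k+1})$ has the right joint distribution. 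Summing the desired output over $S_{k+1}$ and combining with the inductive hypothesis on $X_k$, one computes $\prcond{X_{k+1}=y}{S_0,\dots,S_{k+1}}{x}=\1(y\in S_{k+1})\pi(y)/\pi(S_{k+1})$, which is the claim; and separately the $X$-marginal of the coupling is checked to have the correct one-step transition $p_{k+1}$ by summing over $S_{k+1}$, which reduces to $\sum_{S_{k+1}\ni y} \widehat K_{p_{k+1}}(S_k,S_{k+1})/\,(\text{normalization})$ telescoping to $p_{k+1}(w,y)$ via~\eqref{eq:ES1d}. The point is that every one of these identities involves only the single kernel $p_{k+1}$ active at that step, so the homogeneous proof goes through verbatim with $p$ replaced by $p_{k+1}$.

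The main obstacle, such as it is, is bookkeeping rather than mathematics: one must be careful that the Doob transform $\widehat K_{p_{k+1}}$ is a bona fide transition kernel on each step (which holds because $\pi$ is stationary for every $p_k$, so $\sum_{S'}\widehat K_{p_{k+1}}(S,S')=\sum_{S'}\frac{\pi(S')}{\pi(S)}K_{p_{k+1}}(S,S')=1$, using that the evolving set size is a $K_{p_{k+1}}$-martingale), and that the coupling is genuinely Markovian, i.e.\ the auxiliary randomness $U_{k+1}$ together with an independent uniform used to place $X_{k+1}$ within $S_{k+1}$ depends only on the current states. Since the paper only needs the existence of such a coupling and the stated conditional-uniformity property, I would simply invoke the homogeneous statement of Diaconis--Fill (as in \cite[Theorem~17.23]{LevPerWil}) stepwise and glue the steps, noting explicitly that stationarity of $\pi$ under each $p_k$ is the only structural input required. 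This is why the excerpt can legitimately say ``the proof is identical to the proof of the homogeneous setting.''
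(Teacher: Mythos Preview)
Your proposal is correct and takes essentially the same approach as the paper, which gives no proof at all beyond the remark that ``the proof is identical to the proof of the homogeneous setting by Diaconis and Fill'' and a reference to \cite[Theorem~17.23]{LevPerWil}. You have simply unpacked that remark: the induction on $t$, the observation that only the single kernel $p_{k+1}$ enters at step $k+1$, and the check that $\widehat K_{p_{k+1}}$ is a genuine transition kernel because $\pi(S_t)$ is a martingale under the evolving-set dynamics---all of this is exactly what ``identical to the homogeneous proof'' means in practice, and nothing more is needed.
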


We write $\phi_n = \phi_{p_n}$ and $\psi_n=\psi_{p_n}$, where $p_n$ is the transition matrix at time $n$.

As in~\cite{MorrisPeres} we let 
\[ 
S^{\#}:= \left\{ 
\begin{array}{ccl}
S & \mbox{ if } \pi(S)\le \frac{1}{2} \\
S^c & \mbox{ otherwise }
\end{array} \right.
\]
and 
$$
Z_n:=\frac{\sqrt{\pi(S^{\#}_n)}}{\pi(S_n)}.
$$

The following lemma follows in the same way as in the homogeneous setting of~\cite{MorrisPeres}, but we include the proof for the reader's convenience.

\begin{lemma}\label{lem:zprocess}
	Let $S$ be the Doob transform with respect to absorption at $\Omega$ of the evolving set process associated to a time inhomogeneous Markov chain $X$ with $\prcond{X_{n+1}=x}{X_n=x}{}\geq \gamma$ for all $n$ and~$x$, where $0<\gamma\leq 1/2$. Then for all $n$ and all $S_0\neq \emptyset$ we have 
	\[
	\escondh{Z_{n+1}}{\F_n}{} \leq Z_n \left( 1- \frac{\gamma^2}{2(1-\gamma)^2}\left(\phi_{n+1}\left(\frac{1}{Z_n^{2}}\right)\right)^2 \right),
	\]
	where $\F_n$ stands for the filtration generated by $(S_i)_{i\leq n}$.
\end{lemma}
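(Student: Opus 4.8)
I want to prove that the quantity $Z_n = \sqrt{\pi(S_n^{\#})}/\pi(S_n)$ is (conditionally) contracting along the Doob-transformed evolving set chain, with contraction factor governed by the inhomogeneous conductance profile $\phi_{n+1}$. This is the inhomogeneous analogue of the Morris--Peres submartingale argument, so I would follow that proof line by line, being careful about two points: the transition matrix changes with time (so I use $Q_{n+1}$ and $p_{n+1}$ at step $n\to n+1$), and I must track the Doob transform rather than the raw evolving set.

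First I would unwind the Doob transform. By definition $\widehat K_p(S,S') = \tfrac{\pi(S')}{\pi(S)}K_p(S,S')$, so for any functional $g$ of $S_{n+1}$ we have $\escondh{g(S_{n+1})}{\F_n}{} = \tfrac{1}{\pi(S_n)}\,\escond{\pi(S_{n+1})\,g(S_{n+1})}{\F_n}$, where on the right the un-hatted evolving set step is taken with respect to $p_{n+1}$ (and uniform $U$). Applying this with $g(S_{n+1}) = \sqrt{\pi(S_{n+1}^{\#})}/\pi(S_{n+1})$ gives
\[
\escondh{Z_{n+1}}{\F_n}{} = \frac{1}{\pi(S_n)}\,\escond{\sqrt{\pi(S_{n+1}^{\#})}}{\F_n}.
\]
Now I split according to whether $\pi(S_{n+1})\le 1/2$ or not; in either case $\sqrt{\pi(S_{n+1}^{\#})} \le \sqrt{\pi(S_{n+1})}$ when $S_{n+1}^{\#}=S_{n+1}$, and when $S_{n+1}^{\#}=S_{n+1}^c$ one uses $\sqrt{\pi(S_{n+1}^c)}\le \sqrt{\pi(S_{n+1})}$ together with the symmetry $\widetilde{(S^c)} \eqdist (\widetilde S)^c$ of the evolving set step under reversibility of $p_{n+1}$ (which holds since $\pi$ is stationary for each $p_k$ and the chains here are reversible). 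This is exactly the manipulation in \cite{MorrisPeres}; it reduces the bound to
\[
\escondh{Z_{n+1}}{\F_n}{} \le \frac{\sqrt{\pi(S_n^{\#})}}{\pi(S_n)}\left(1 - \psi_{p_{n+1}}(S_n^{\#})\right) = Z_n\left(1 - \psi_{n+1}(S_n^{\#})\right),
\]
using the definition $\psi_p(S) = 1 - \E{\sqrt{\pi(\widetilde S)/\pi(S)}}$ applied to $S = S_n^{\#}$ under the step $p_{n+1}$.

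Next I apply Lemma~\ref{lem:phipsi} to the transition matrix $p_{n+1}$, which satisfies $p_{n+1}(x,x)\ge\gamma$ by hypothesis: for the set $S_n^{\#}$ (which has $\pi(S_n^{\#})\le 1/2$) we get $1 - \psi_{n+1}(S_n^{\#}) \le 1 - \tfrac{\gamma^2}{2(1-\gamma)^2}(\phi_{n+1}(S_n^{\#}))^2$. Finally I need $\phi_{n+1}(S_n^{\#}) \ge \phi_{n+1}(1/Z_n^2)$, i.e.\ that replacing the set by the profile at argument $1/Z_n^2$ only decreases $\phi$. This follows because $\pi(S_n^{\#}) = (\sqrt{\pi(S_n^{\#})}/\pi(S_n))^{-2}\cdot\big(\sqrt{\pi(S_n^{\#})}\big)^{-2}\cdot\pi(S_n^{\#})$... more simply, since $S_n$ is a Doob transform it is nonempty and $Z_n^2 = \pi(S_n^{\#})/\pi(S_n)^2 \ge \pi(S_n^{\#})/\pi(S_n^{\#})^2 = 1/\pi(S_n^{\#})$ when $\pi(S_n)\le 1/2$ — wait, I need the reverse; in fact one always has $\pi(S_n)\le$ (something), and the relevant inequality $\pi(S_n^{\#}) \le 1/Z_n^2$ holds because $Z_n^2\pi(S_n^{\#}) = \pi(S_n^{\#})^2/\pi(S_n)^2 \le 1$ as $\pi(S_n^{\#})\le\pi(S_n)$. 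Hence $\pi(S_n^{\#})\le 1/Z_n^2$, and since $\phi_{n+1}(\cdot)$ is the infimum of $\phi_{n+1}$ over sets of measure at most the argument, monotonicity of the profile gives $\phi_{n+1}(S_n^{\#}) \ge \phi_{n+1}(\pi(S_n^{\#})) \ge \phi_{n+1}(1/Z_n^2)$. Chaining the three displays yields the claim.

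\textbf{Main obstacle.} The computations are routine once the bookkeeping is right; the one genuinely delicate point is the reduction $\escond{\sqrt{\pi(S_{n+1}^{\#})}}{\F_n} \le \sqrt{\pi(S_n^{\#})}\,(1-\psi_{n+1}(S_n^{\#}))$, which requires the symmetry $\widetilde{(S^c)}\eqdist(\widetilde S)^c$ for the $p_{n+1}$-evolving-set step. This is where reversibility of each $p_k$ (with respect to the common $\pi$) enters and where one must mimic \cite{MorrisPeres} carefully in the time-inhomogeneous setting — but since the step from time $n$ to $n+1$ uses a \emph{single} transition matrix $p_{n+1}$, the homogeneous argument applies verbatim to that step, and no new ideas are needed.
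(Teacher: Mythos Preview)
Your argument is correct and essentially identical to the paper's: unwind the Doob transform to get $\escondh{Z_{n+1}/Z_n}{\F_n}{}=\escond{\sqrt{\pi(S_{n+1}^{\#})/\pi(S_n^{\#})}}{\F_n}{}$, split into the cases $\pi(S_n)\le 1/2$ and $\pi(S_n)>1/2$ (the relevant split is on $S_n$, not $S_{n+1}$---your phrasing there is muddled though the conclusion $\escondh{Z_{n+1}}{\F_n}{}\le Z_n(1-\psi_{n+1}(S_n^{\#}))$ is right), handle the large case via the complement symmetry of the evolving-set step, apply Lemma~\ref{lem:phipsi}, and pass to $\phi_{n+1}(1/Z_n^2)$ using $\pi(S_n^{\#})\le 1/Z_n^2$ and monotonicity of the profile. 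One small correction: the identity $\widetilde{(S^c)}\eqdist(\widetilde S)^c$ needs only that $\pi$ is stationary for $p_{n+1}$ (since then $Q_{n+1}(S^c,y)/\pi(y)=1-Q_{n+1}(S,y)/\pi(y)$), not reversibility---which is good, because reversibility is not assumed in the abstract setting of Section~\ref{sec:evolve}.
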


\begin{proof}[\bf Proof]

Using the transition probability of the Doob transform of the evolving set, we almost surely have 
	\begin{align*}
\escondh{\frac{Z_{n+1}}{Z_n}}{\F_n}{} = \escond{\frac{\pi(S_{n+1})}{\pi(S_n)} \cdot \frac{Z_{n+1}}{Z_n}}{\F_n}{} = \escond{\sqrt{\frac{\pi(S_{n+1}^{\#})}{\pi(S_n^{\#})}}}{\F_n}{}.
\end{align*}
If $\pi(S_n)\leq 1/2$, then 
\begin{align*}
	\escond{\sqrt{\frac{\pi(S_{n+1}^{\#})}{\pi(S_n^{\#})}}}{\F_n}{}\leq \escond{\sqrt{\frac{\pi(S_{n+1}^{})}{\pi(S_n^{})}}}{\F_n}{} \leq 1- \psi_{n+1}(\pi(S_n)).
\end{align*}
Suppose next that $\pi(S_n)>1/2$. Then 
\begin{align*}
	\escond{\sqrt{\frac{\pi(S_{n+1}^{\#})}{\pi(S_n^{\#})}}}{\F_n}{} \leq \escond{\sqrt{\frac{\pi(S_{n+1}^{c})}{\pi(S_n^{c})}}}{\F_n}{} \leq 1- \psi_{n+1}(\pi(S_n^c)).
\end{align*}
Lemma~\ref{lem:phipsi} and the fact that $\phi_{n+1}$ is decreasing now give that 
\begin{align*}
	\escondh{\frac{Z_{n+1}}{Z_n}}{\F_n}{}\leq 1- \frac{\gamma^2}{2(1-\gamma)^2}\cdot (\phi_{n+1}(\pi(S_n)))^2.
\end{align*}
Now note that if $\pi(S_n)\leq 1/2$, then $Z_n = (\pi(S_n))^{-1/2}$. If $\pi(S_n) >1/2$, then $Z_n = \sqrt{\pi(S_n^c)}/\pi(S_n)\leq \sqrt{2}$. Since $\phi_{n+1}(r)=\phi_{n+1}(1/2)$ for all $r>1/2$, we get that we always have
\[
\phi_{n+1}(\pi(S_n)) = \phi_{n+1}\left(\frac{1}{Z_n^{2}}\right)
\]
and this concludes the proof. 
\end{proof}

\section{Preliminaries on supercritical percolation}\label{sec:prelim}

In this section we collect some standard results for supercritical percolation on $\Z_n^d$ that will be used throughout the paper. We write $\B(x,r)$ for the box in $\Z^d$ centred at $x$ of side length $r$. We also use $\B(x,r)$ to denote the obvious subset of $\Z_n^d$ whenever $r<n$. We denote by $\partial \B(x,r)$ the inner vertex boundary of the ball.

\begin{lemma}\label{lem:deterministic}
	Let $A\subseteq \Z_n^d$ be a deterministic set with $|A| = \alpha n^d$, where $\alpha\in (0,1]$. Let $\GG$ be the giant cluster of supercritical percolation in $\Z_n^d$ with parameter $p>p_c$. Then for all $\epsilon\in (0,\theta(p))$ there exists a positive constant $c$ depending on $\epsilon, d, p, \alpha$ so that for all $n$
	\[
	\pr{|A\cap \GG| \notin \left(\alpha (\theta(p)-\epsilon)n^d, \alpha (\theta(p)+\epsilon)n^d \right)} \leq \frac{1}{c} \exp\left(-cn^{\frac{d}{d+1}}\right).
	\]
\end{lemma}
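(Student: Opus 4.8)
The plan is to prove the concentration of $|A \cap \GG|$ around its mean $\alpha\theta(p)n^d$ by showing the mean is of the right order and then applying a bounded-differences (Azuma--Hoeffding type) concentration inequality, with the stretched-exponential rate $n^{d/(d+1)}$ coming from the known surface-order large deviation behaviour for the giant cluster.

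First I would address the first moment. For a vertex $x$, write $p_n(x) := \pr{x \in \GG}$. On the torus $\Z_n^d$, the giant cluster is the (unique, with high probability) cluster of size $\Theta(n^d)$, and standard supercritical percolation estimates (uniqueness of the giant, comparison with $\Z^d$) give that $p_n(x) \to \theta(p)$ uniformly in $x$ as $n \to \infty$, in fact with an error that is exponentially small in $n$. Hence $\E{|A\cap\GG|} = \sum_{x\in A} p_n(x) = \alpha\theta(p)n^d (1+o(1))$, so for $n$ large the mean lies well inside the interval $\bigl(\alpha(\theta(p)-\epsilon/2)n^d, \alpha(\theta(p)+\epsilon/2)n^d\bigr)$. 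It then suffices to prove $\pr{\bigl||A\cap\GG| - \E{|A\cap\GG|}\bigr| \ge \tfrac{\alpha\epsilon}{2}n^d} \le \tfrac1c\exp(-cn^{d/(d+1)})$.

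For the concentration itself, the natural tool is a large-deviation estimate of surface order for the volume of the supercritical cluster. The key fact (due to the standard literature on supercritical percolation, e.g.\ Pisztora's renormalization and the works on Wulff-type large deviations, or Deuschel--Pisztora) is that for supercritical $p$ on $\Z_n^d$, the size $|\GG|$ of the giant cluster satisfies $\pr{\bigl||\GG| - \theta(p)n^d\bigr| \ge \delta n^d} \le C\exp(-cn^{d-1})$, and more generally the number of vertices of $\GG$ in any fixed ``nice'' subregion concentrates at rate $\exp(-c n^{d-1})$; restricting to an arbitrary deterministic set $A$ (not necessarily connected or nice) costs us the weaker but still stretched-exponential rate $n^{d/(d+1)}$. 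Concretely, I would tile $\Z_n^d$ into boxes of side $L = L(n)$ chosen as a suitable power of $n$ (the exponent $d/(d+1)$ is exactly what one gets by optimizing $L$ in a renormalization argument where errors of the form $\exp(-cL)$ per box compete with $(n/L)^{d}$-type combinatorial factors and a martingale/bounded-differences sum over the $(n/L)^d$ blocks with increments of size $L^d$, giving a bound $\exp(-c\,\epsilon^2 n^{2d}/((n/L)^d L^{2d})) = \exp(-c\epsilon^2 n^d / L^d)$ traded against $\exp(-cL)$, optimized at $L \asymp n^{d/(d+1)}$). Within each good box the local density of $\GG$ is close to $\theta(p)$ by Pisztora-type estimates, and the contribution of $A$ is controlled box by box; the bad boxes are rare with probability $\exp(-cL) = \exp(-cn^{d/(d+1)})$ each, and there are polynomially many, so a union bound absorbs them.

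The main obstacle I expect is making the renormalization/large-deviation step clean for an \emph{arbitrary} deterministic set $A$ rather than a geometrically regular one: one cannot simply quote a Wulff-shape large-deviation theorem, because $A$ could be, say, a scattered set of isolated vertices, in which case $|A\cap\GG|$ behaves more like a sum of weakly dependent Bernoulli's and one must exploit the exponential decay of correlations (Ornstein--Zernike / ratio-weak-mixing for the event $\{x\in\GG\}$, itself a consequence of Pisztora's theory) to get a concentration bound. The right way is probably to combine: (i) a coarse-graining so that the event $\{x\in\GG\}$ for $x$ in a good box is determined up to exponentially small error by the local (within a block of side $O(L)$) configuration of open crossing clusters, and (ii) a bounded-differences inequality applied to the block variables. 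The exponent $d/(d+1)$ is the price of this coarse-graining and I would simply verify the optimization gives that exponent rather than trying to sharpen it, since the theorem only claims $n^{d/(d+1)}$.
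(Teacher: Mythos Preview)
Your plan and the paper's proof share the central idea: replace the nonlocal event $\{x \in \GG\}$ by a local proxy at a mesoscopic scale $L \asymp n^{d/(d+1)}$, show the two agree except with probability $\exp(-cL)$, and then use locality to get concentration of the proxy sum at rate $\exp(-c\, n^d/L^d) = \exp(-cL)$. The paper's execution is considerably more elementary than what you outline. Its proxy is simply $A(x) = \{x \leftrightarrow \partial \B(x, n^\beta)\}$ with $\beta = d/(d+1)$; for concentration it partitions $\Z_n^d$ into $n^{d\beta}$ sublattices of spacing $n^\beta$, on each of which the events $A(x)$ are \emph{exactly} independent, so a plain i.i.d.\ concentration bound plus a union bound over the sublattices suffices---no Pisztora renormalisation, no Wulff large deviations, no martingale inequality, no Ornstein--Zernike. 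The passage from $\sum_{x \in A} \1(A(x))$ to $|A \cap \GG|$ then uses only exponential decay of finite-cluster radii and the two-point uniqueness estimate (Grimmett, Lemma~7.89). One caution on your route that is worth making explicit: bounded differences over blocks of side $L$ applied \emph{directly} to $|A \cap \GG|$ does not have increment bound $L^d$, since flipping the edges in a single block can sever the giant component and change the count by order $n^d$; the inequality must be applied to the sum of local proxy indicators, for which the $L^d$ Lipschitz bound is genuine. Your step~(i) anticipates this, but the optimisation paragraph reads as if the raw quantity already has bounded increments per block.
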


\begin{proof}[\bf Proof]

Let $\beta\in (0,1)$ to be determined later.  
We start the proof by showing that with high probability a certain fraction of the points in $A$ percolate to distance $n^\beta/2$. More precisely, we let 
$A(x) = \{x \leftrightarrow \partial\B(x,n^\beta)\}$. We will first show that for all sets $D\subseteq \Z_n^d$ with $|D|=\gamma n^d$, where $\gamma\in (0,1]$, and for all $\epsilon\in (0,\theta(p))$ there exists  $c>0$ depending on $\epsilon, d, p, \gamma$ so that for all $n$
\begin{align}\label{eq:goalperc}
	\pr{\sum_{x\in D} \1(A(x)) \notin \left(\gamma (\theta(p)-\epsilon)n^d, \gamma (\theta(p)+\epsilon)n^d\right)}\leq \frac{1}{c}\exp\left(-cn^{\frac{d}{d+1}}\right).
\end{align}
Let $\LL$ be a lattice of points contained in $\Z_n^d$ that are at distance $n^\beta$ apart. Then~$\LL$ contains $n^{d(1-\beta)}$ points, and hence there exist $n^{d\beta}$ such lattices. By a union bound over all such lattices $\LL$ we now have
\begin{align}\label{eq:concpro}
	\nonumber\pr{\sum_{x\in D}\1(A(x)) \notin \left(\gamma (\theta(p)-\epsilon)n^d, \gamma (\theta(p)+\epsilon)n^d\right)} \\ \leq n^{\beta d}\cdot \max_{\LL}\pr{\sum_{x\in \LL\cap D}(\1(A(x)))  -  |\LL\cap D|\theta(p)\notin\left(- \gamma\epsilon n^{d(1-\beta)}, \gamma\epsilon n^{d(1-\beta)}\right)}.
\end{align}
Using the standard coupling between bond percolation on the torus and the whole lattice and~\cite[Theorems~8.18 and~8.21]{Grimmett} we get 
\begin{align}
\nonumber	&\pr{A(x)} = \pr{x\in \C_\infty} + \pr{x\notin \C_\infty, x\leftrightarrow \B(x,n^\beta)}\geq  \theta(p) \quad \text{ and }\\
\label{eq:seceq}&\pr{A(x)} \leq  \theta(p)+ e^{-cn^\beta}
\end{align}
for some constant $c$ depending only on $d$ and $p$.
We now fix a lattice $\LL$.
So for all $n$ large enough we can upper bound the probability appearing in~\eqref{eq:concpro} by
\begin{align*}
	\pr{\sum_{x\in \LL\cap D} \left(\1(A(x)) - \pr{A(x)}\right) \notin \left(-\gamma \epsilon n^{d(1-\beta)}, \frac{1}{2}\gamma \epsilon n^{d(1-\beta)} \right)}.
\end{align*}
We now note that for points $x\in \LL\cap D$ the events $A(x)$ are independent. Using a concentration inequality for sums of i.i.d.\ random variables and the fact that $|\LL\cap D|\leq n^{d(1-\beta)}$ we obtain
\begin{align*}
		\pr{\sum_{x\in \LL\cap D} \left(\1(A(x)) - \pr{A(x)}\right) \notin \left(-\gamma \epsilon n^{d(1-\beta)}, \frac{1}{2}\gamma \epsilon n^{d(1-\beta)} \right)}
		\lesssim \exp\left(-cn^{d(1-\beta)} \right),
	\end{align*}
	where $c$ is a positive constant depending on $\gamma$ and $\epsilon$.
	 Plugging this back into~\eqref{eq:concpro} gives
	\begin{align}\label{eq:(A)}
	\pr{\sum_{x\in D}\1(A(x)) \notin \left(\gamma (\theta(p)-\epsilon)n^d, \gamma (\theta(p)+\epsilon)n^d\right)} \lesssim \exp\left(-cn^{d(1-\beta)} \right)
	\end{align}
for a possibly different constant $c$. 

We next turn to prove that 
\begin{align}\label{eq:lowerg}
	\pr{|A\cap \GG|\leq \alpha (\theta(p)-\epsilon)n^d} \lesssim \exp\left(-cn^{\frac{d}{d+1}} \right).
\end{align}
From~\eqref{eq:seceq} and using a union bound we now get 
\begin{align}\label{eq:(B)}
	\pr{\exists \, x\in \B(0,n(1-\delta)): A(x) \cap \{x\centernot\longleftrightarrow \partial\B(0,n)\}}\leq \frac{1}{c}e^{-cn^\beta}.
\end{align}
Using~\cite[Lemma~7.89]{Grimmett} for $d\geq 3$ and duality, the exponential decay in the subcritical regime and the fact that $1-p<1/2$ for $d=2$, we deduce that for all $\delta\in(0,1)$, there exists a constant $c$ (depending on $\delta$, $d$ and $p$) so that for large $n$ and for all $x,y\in \B(0,n(1-\delta))$
\begin{align*}
	\pr{x\leftrightarrow \partial\B(0,n), y\leftrightarrow \partial\B(0,n), x\centernot\longleftrightarrow y}\leq e^{-cn}.
\end{align*}
Using this and a union bound we now get
\begin{align}\label{eq:(C)}
	\pr{\exists \, x,y \in \B(0,n(1-\delta)): \, x\leftrightarrow \partial\B(0,n), y\leftrightarrow \partial\B(0,n), x\centernot\longleftrightarrow y}\leq e^{-cn}.
\end{align}
Take $\til{\epsilon}>0$ and $\delta\in (0,1)$ such that $(1+\theta(p)-\til{\epsilon})(1-\delta)^d>1$. It follows that if there are at least $(\theta(p)-\til{\epsilon})(1-\delta)^d n^d$ points connected to each other in $\B(0,n(1-\delta))$, then the giant cannot be contained in $\B(0,n)\setminus \B(0,n(1-\delta))$. This observation and~\eqref{eq:(A)} (with $D=\B(0,n(1-\delta))$ and so $\gamma  = (1-\delta)^d$ and $\epsilon=\til{\epsilon}$) together with~\eqref{eq:(B)} and~\eqref{eq:(C)} give
\[
	\pr{\exists \, x\in \B(0,(1-\delta)n): \, A(x)\cap \{x\notin \GG\}}\lesssim e^{-cn}+e^{-cn^{\beta}} + e^{-cn^{d(1-\beta)}}.
\]
Taking $\beta=d/(d+1)$ so that $\beta=d(1-\beta)$ we obtain
\begin{align}\label{eq:allpointsconnected}
\pr{\exists \, x\in \B(0,(1-\delta)n): \, A(x)\cap \{x\notin \GG\}}\lesssim e^{-cn^{d/(d+1)}}.
\end{align}
Let now $\til{A} = A\cap \B(0,n(1-\delta))$. Let ${\epsilon'}$ be such that $(\alpha-{\epsilon'})(\theta(p)-{\epsilon'}) = \alpha (\theta(p) -\epsilon)$. 
By decreasing~$\delta$ if necessary we get that $|\til{A}| \geq  (\alpha-{\epsilon'})n^d$. So applying~\eqref{eq:goalperc} we obtain
\begin{align*}
	\pr{\sum_{x\in \til{A}}\1(A(x))\leq (\alpha-{\epsilon'})(\theta(p)-{\epsilon'})n^d} \leq \frac{1}{c}\exp\left(-cn^{\frac{d}{d+1}}\right).
\end{align*}
This together with~\eqref{eq:allpointsconnected} finally gives 
\begin{align*}
	\pr{\sum_{x\in \til{A}} \1(x\in \GG)\leq (\alpha-{\epsilon'})(\theta(p)-{\epsilon'})n^d}\leq \frac{1}{c}\exp\left(-cn^{\frac{d}{d+1}}\right).
\end{align*}
By the choice of ${\epsilon'}$ this proves~\eqref{eq:lowerg}. To finish the proof of the lemma it only remains to show that
\begin{align*}
	\pr{\sum_{x\in A}\1(x\in \GG) \geq \alpha (\theta(p)+\epsilon) n^d} \lesssim \exp\left(-cn^{\frac{d}{d+1}} \right).
\end{align*}
Using~\eqref{eq:goalperc} we can upper bound this probability by
\begin{align*}
	\pr{\exists \, x\in A: \, (A(x))^c\cap \{x\in \GG\}} + \pr{\sum_{x\in A} \1(A(x))\geq \alpha (\theta(p)+\epsilon)n^d} \\
	\leq \pr{{\rm{diam}}(\GG)\leq n^\beta} + \frac{1}{c}\exp\left(-cn^{\frac{d}{d+1}} \right) \lesssim \exp\left(-cn^{\frac{d}{d+1}} \right),
\end{align*}
where the last inequality follows from~\eqref{eq:lowerg} by taking $A=\Z_n^d$.
\end{proof}

\begin{corollary}\label{cor:manyperc}
Let $\GG_1,\GG_2, \ldots$ be the giant components of i.i.d.\ percolation configurations with $p>p_c$ in $\Z_n^d$. Fix $\delta\in (0,1/4)$ and let $k=[2(1-\delta)/(\delta \theta(p))]+1$. Then there exists a positive constant $c$ so that 
\begin{align*}
	\pr{|\GG_1\cup \ldots \cup \GG_k|<(1-\delta)n^d} \leq \frac{1}{c}\exp\left(-cn^{\frac{d}{d+1}}\right).
\end{align*}	
\end{corollary}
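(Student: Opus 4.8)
The plan is to apply Lemma~\ref{lem:deterministic} $k$ times, peeling off one giant cluster at a time and keeping track of how much of $\Z_n^d$ remains uncovered. Write $\theta=\theta(p)$ and first fix $\epsilon\in(0,\theta)$ small enough that $k\delta(\theta-\epsilon)>1$; this is possible because the choice of $k$ gives $k\delta\theta\ge 2(1-\delta)>1$ (using $\delta<1/4$). For $0\le j\le k$ I set
\[
A_j:=\Z_n^d\setminus(\GG_1\cup\dots\cup\GG_j),\qquad A_0=\Z_n^d,
\]
so that the $A_j$ are nested decreasing, each $A_j$ is a deterministic function of the first $j$ percolation configurations, and $|A_j|=|A_{j-1}|-|A_{j-1}\cap\GG_j|$.

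The first step is purely deterministic bookkeeping. Let $G_j$ be the event that either $|A_{j-1}|\le\delta n^d$ or $|A_{j-1}\cap\GG_j|\ge\delta(\theta-\epsilon)n^d$. On $\bigcap_{j=1}^k G_j$ one has $|A_k|\le\delta n^d$, i.e.\ $|\GG_1\cup\dots\cup\GG_k|\ge(1-\delta)n^d$: if not, then $|A_j|>\delta n^d$ for every $j\le k$ by monotonicity, so each $G_j$ forces $|A_{j-1}\cap\GG_j|\ge\delta(\theta-\epsilon)n^d$, and then $|A_k|=n^d-\sum_{j=1}^k|A_{j-1}\cap\GG_j|\le(1-k\delta(\theta-\epsilon))n^d<0$, which is absurd. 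Hence
\[
\pr{|\GG_1\cup\dots\cup\GG_k|<(1-\delta)n^d}\le\sum_{j=1}^k\pr{G_j^c}.
\]

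The second step bounds $\pr{G_j^c}$ by applying Lemma~\ref{lem:deterministic} conditionally. Let $\F_{j-1}$ denote the $\sigma$-algebra generated by the first $j-1$ configurations; then $A_{j-1}$ is $\F_{j-1}$-measurable while $\GG_j$ is independent of $\F_{j-1}$. On the $\F_{j-1}$-measurable event $\{|A_{j-1}|>\delta n^d\}$, put $\alpha_{j-1}=|A_{j-1}|/n^d\in(\delta,1]$ and apply Lemma~\ref{lem:deterministic} to the (conditionally) deterministic set $A_{j-1}$ under the conditional law given $\F_{j-1}$:
\[
\prcond{|A_{j-1}\cap\GG_j|\le\alpha_{j-1}(\theta-\epsilon)n^d}{\F_{j-1}}{}\le\frac1c\exp\left(-c\,n^{\frac{d}{d+1}}\right).
\]
Since $\alpha_{j-1}(\theta-\epsilon)n^d\ge\delta(\theta-\epsilon)n^d$ and $G_j^c\subseteq\{|A_{j-1}|>\delta n^d\}\cap\{|A_{j-1}\cap\GG_j|\le\alpha_{j-1}(\theta-\epsilon)n^d\}$, the same exponential bound holds for $\pr{G_j^c}$. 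Summing over the (constantly many) indices $j$ and adjusting $c$ then gives the corollary.

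The main obstacle I anticipate is a uniformity issue: the constant $c$ in Lemma~\ref{lem:deterministic} depends on the density $\alpha$, whereas here $\alpha=\alpha_{j-1}$ ranges over the interval $(\delta,1]$ and varies with $n$, $j$ and the configuration. I would resolve this by observing that, inspecting the proof of Lemma~\ref{lem:deterministic}, the constant may be chosen to depend on $\alpha$ only through quantities (such as the auxiliary $\epsilon'$ there and the concentration parameters) that stay bounded away from their degenerate values uniformly over the compact set $[\delta,1]$, so that $c^*:=\inf_{\alpha\in[\delta,1]}c(\epsilon,d,p,\alpha)>0$ works; alternatively, one can avoid the issue entirely by replacing $A_{j-1}$ throughout by a deterministic (given $\F_{j-1}$) subset of it of size exactly $\lceil\delta n^d\rceil$, which reduces everything to the single density $\alpha=\lceil\delta n^d\rceil/n^d$. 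Everything else is the elementary counting of the first step together with the conditional independence of $\GG_j$ from the earlier configurations, which is routine.
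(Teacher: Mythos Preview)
Your proof is correct and follows essentially the same approach as the paper: both arguments peel off one giant cluster at a time, use the conditional independence of $\GG_j$ from the earlier configurations to apply Lemma~\ref{lem:deterministic} to the (conditionally deterministic) uncovered set, and finish with a union bound over the $k$ indices. The paper phrases the bookkeeping slightly differently---it tracks the increments $|\GG_i\setminus(\GG_1\cup\dots\cup\GG_{i-1})|$ and uses the threshold $\tfrac12\delta\theta(p)n^d$ rather than your $\delta(\theta-\epsilon)n^d$---but the substance is identical.

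One point worth noting: the uniformity issue you flag (that the constant $c$ in Lemma~\ref{lem:deterministic} depends on $\alpha$, which here is random and ranges over $(\delta,1]$) is genuine, and the paper's proof passes over it in silence. Your second resolution---replacing $A_{j-1}$ by a subset of fixed size $\lceil\delta n^d\rceil$---is the cleanest fix and makes your argument strictly more complete than the paper's on this detail.
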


\begin{proof}[\bf Proof]
We start by noting that 
\begin{align*}
	|\GG_1\cup \ldots\cup \GG_k| = \sum_{i=1}^{k}|\GG_i\setminus (\GG_1\cup\ldots\cup \GG_{i-1})|,
\end{align*}
where we set $\GG_0=\emptyset$.	Therefore, by the choice of $k$ we obtain
	\begin{align*}
		&\pr{|\GG_1\cup \ldots \cup \GG_k|<(1-\delta)n^d} \\ &\leq  \pr{\exists \, i\leq k: \, |\GG_1\cup\ldots \cup \GG_{i-1}|<(1-\delta)n^d, |\GG_i\setminus(\GG_1\cup\ldots\cup \GG_{i-1})|< \frac{1}{2}\delta \theta(p)n^d}.
	\end{align*}
	For any $i$, since the percolation clusters are independent, by conditioning on $\GG_1,\ldots, \GG_{i-1}$ and using Lemma~\ref{lem:deterministic} we get
	\begin{align*}
		\pr{|\GG_1\cup\ldots \cup \GG_{i-1}|<(1-\delta)n^d, |\GG_i\setminus(\GG_1\cup\ldots\cup \GG_{i-1})|< \frac{1}{2}\delta \theta(p)n^d} \leq \frac{1}{c} \exp\left(-cn^{\frac{d}{d+1}}\right).
	\end{align*}
	Thus by the union bound we obtain
	\begin{align*}
		\pr{|\GG_1\cup \ldots \cup \GG_k|<(1-\delta)n^d} \leq  \frac{k}{c} \exp\left(-cn^{\frac{d}{d+1}}\right) \leq \frac{1}{c'}\exp\left(-c'n^{\frac{d}{d+1}}\right),
		\end{align*}
where $c'$ is a positive constant and this concludes the proof.
\end{proof}

We perform percolation in $\Z_n^d$ with parameter $p>p_c$.
Let $\C_1, \C_2, \ldots$ be the clusters in decreasing order of their size. We write $\C(x)$ for the cluster containing the vertex $x\in \Z_n^d$. For any $A\subseteq \Z_n^d$, we denote by $\diam A$ the diameter of $A$.

\begin{proposition}\label{cl:standardperc}
There exists a constant $c$ so that for all $r$ and for all $n$ we have 
	\begin{align*}
		\pr{\exists i\geq 2: \, \diam{\C_i}\geq r}\leq n^d e^{-cr} + \exp\left(-cn^{\frac{d}{d+1}} \right).
	\end{align*}
\end{proposition}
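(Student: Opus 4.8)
The plan is to show that, apart from the giant cluster $\GG=\C_1$, every cluster has exponentially small probability of having diameter at least $r$, and then to conclude by a union bound over the $n^d$ vertices of the torus. First I would record a standard geometric observation: a connected subset of $\Z_n^d$ of diameter at least $r$ must contain a vertex $z$ with $z\leftrightarrow\partial\B(z,s)$ for some $s$ equal to a fixed $d$-dependent fraction of $r$ (when $r$ exceeds the diameter of the torus there is nothing to prove). Applying this to $\C_i$ and using that $\C_i\neq\GG$ for all $i\geq 2$, one gets
\[
\pr{\exists i\geq2:\,\diam{\C_i}\geq r}\leq\pr{\exists z\in\Z_n^d:\,z\notin\GG\text{ and }z\leftrightarrow\partial\B(z,s)},
\]
so it suffices to bound the right-hand side.

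Next I would treat two ranges of $s$. Set $\beta=d/(d+1)$. If $s\geq n^\beta$, then $\{z\leftrightarrow\partial\B(z,s)\}\subseteq\{z\leftrightarrow\partial\B(z,n^\beta)\}$, and I would invoke the estimate~\eqref{eq:allpointsconnected} from the proof of Lemma~\ref{lem:deterministic} — which says that with probability $\geq 1-\tfrac1ce^{-cn^{d/(d+1)}}$ every vertex connected to $\partial\B(\cdot,n^\beta)$ lies in $\GG$ — together with a union bound over the $O(1)$ translates of $\B(0,(1-\delta)n)$ needed to cover $\Z_n^d$ (using translation invariance of torus percolation). This already bounds the probability by $\tfrac1ce^{-cn^{d/(d+1)}}$, which is within budget. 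If $s<n^\beta$, I would split $\{z\leftrightarrow\partial\B(z,s)\}$ according to whether $z\leftrightarrow\partial\B(z,n^\beta)$: the piece where it does is handled exactly as above by~\eqref{eq:allpointsconnected}, while on the piece $\{z\leftrightarrow\partial\B(z,s),\,z\centernot\longleftrightarrow\partial\B(z,n^\beta)\}$ the event depends only on the edges inside $\B(z,n^\beta)$, so — since $n^\beta<n/2$ for $n$ large — under the standard coupling of bond percolation on $\Z_n^d$ with bond percolation on $\Z^d$ it agrees with the corresponding $\Z^d$-event, on which it forces the cluster of $z$ to be finite of radius at least $s$. By the exponential decay of the radius of a finite supercritical cluster (\cite[Theorems~8.18 and~8.21]{Grimmett}, as already used for~\eqref{eq:seceq}) this has probability at most $e^{-cs}$. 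Summing over the $n^d$ vertices $z$ and combining the two terms gives a bound $n^de^{-cs}+\tfrac2ce^{-cn^{d/(d+1)}}$, and since $s$ is a fixed fraction of $r$, relabelling the constant yields the claimed estimate.

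I expect the only care needed is routine bookkeeping rather than a genuine obstacle: (i) making the coupling step precise, i.e.\ that $\{z\leftrightarrow\partial\B(z,s),\,z\centernot\longleftrightarrow\partial\B(z,n^\beta)\}$ is genuinely local and therefore transfers between the torus and $\Z^d$ (this uses $n^\beta<n/2$, hence "$n$ large", which is harmless since for bounded $n$ the bound is trivial once $c$ is taken small enough, as then either $n^de^{-cr}\geq1$ or $r$ exceeds the diameter of $\Z_n^d$ and the probability vanishes); (ii) the geometric reduction to one-arm events, keeping track of the $\ell^\infty$-box conventions and the factor relating diameter to arm radius; and (iii) confirming that $\GG$ coincides with $\C_1$ on the relevant high-probability event so that "$\C_i\neq\GG$" is legitimate for $i\geq2$ (alternatively one simply adds the event $\{|\GG|<(\theta(p)-\epsilon)n^d\}$ from Lemma~\ref{lem:deterministic} as one more $e^{-cn^{d/(d+1)}}$ term). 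The substance of the argument is entirely contained in the two inputs already available: the uniqueness-type bound~\eqref{eq:allpointsconnected} for scales $\gtrsim n^{d/(d+1)}$, and the supercritical finite-cluster radius bound from~\cite{Grimmett} for smaller scales.
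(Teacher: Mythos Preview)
Your proposal is correct and follows essentially the same approach as the paper: both reduce by a union bound to controlling $\pr{z\leftrightarrow\partial\B(z,s),\,z\notin\C_1}$, split according to whether $z$ reaches a far boundary, handle the ``not reaching far'' piece via the coupling with $\Z^d$ and \cite[Theorems~8.18, 8.21]{Grimmett}, and handle the ``reaching far but not the giant'' piece via a uniqueness argument. The only cosmetic difference is that the paper splits at scale $n$ and invokes \cite[Lemma~7.89]{Grimmett} together with Lemma~\ref{lem:deterministic} directly, whereas you split at scale $n^{d/(d+1)}$ and quote the already-derived estimate~\eqref{eq:allpointsconnected}; the ingredients and the final bound are identical.
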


\begin{proof}[\bf Proof]
We write $\B_r=\B(0,r)$, where as before $\B(0,r)$ denotes the box of side length $r$ centred at~$0$. Then we have 
\begin{align*}
	\pr{{\rm{diam}(\C(0))} \geq r,  \C(0)\neq \C_1} &\leq \pr{0\longleftrightarrow \partial B_{r}, \C(0)\neq \C_1} \\
&\leq  \pr{0\longleftrightarrow \partial \B_n, \C(0)\neq \C_1} + \pr{0\longleftrightarrow \partial \B_{r}, 0 \centernot\longleftrightarrow \partial\B_n }.
\end{align*}
Using the standard coupling between bond percolation on $\Z_n^d$ and bond percolation on $\Z^d$ and~\cite[Theorems~8.18 and~8.21]{Grimmett} we obtain
\begin{align*}
	\pr{0\longleftrightarrow \partial \B_{r}, 0 \centernot\longleftrightarrow \partial\B_n} \leq e^{-cr}.
\end{align*}
Lemma~\ref{lem:deterministic} now gives us that
\begin{align*}
	\pr{\{\C_1\cap \B_{n/4}= \emptyset\}\cup \{\C_1\cap (\B_{n}\setminus \B_{3n/4})=\emptyset\}} \lesssim \exp\left(-cn^{\frac{d}{d+1}} \right).
\end{align*}
So this now implies
\begin{align*}
	\pr{0\longleftrightarrow \partial \B_n, \C(0)\neq \C_1} \lesssim \sum_{x\in 
	\B_{n/4}}\pr{0\longleftrightarrow \partial\B_{3n/4}, x\longleftrightarrow\partial\B_{3n/4}, 0\centernot\longleftrightarrow x} + \exp\left(-cn^{\frac{d}{d+1}} \right).
	\end{align*}
But using~\cite[Lemma~7.89]{Grimmett} we obtain
\begin{align*}
	\pr{0\longleftrightarrow \partial\B_{3n/4}, x\longleftrightarrow\partial\B_{3n/4}, 0\centernot\longleftrightarrow x}\leq e^{-cn}.
\end{align*}
Taking a union bound over all the points of the torus concludes the proof.
\end{proof}

\begin{corollary}\label{cor:giantandsmall}
Consider now dynamical percolation on $\Z_n^d$ with $p>p_c$, where the edges refresh at rate $\mu$, started from stationarity. Let $\C_1(t)$ denote the giant cluster at time $t$.
Then for all $k\in \N$, there exists a positive constant $c$ so that for all $\epsilon<\theta(p)$ we have as~$n\to \infty$
	\begin{align*}
	\pr{|\C_1(t)|\in((\theta(p)-\epsilon) n^d, (\theta(p)+\epsilon) n^d) \,\,{\rm{ and }}\,\,\diam{\C_i(t)}\leq c\log n, \  \forall t\leq n^k/\mu, \, \forall i\geq 2}\to 1.
	\end{align*}
\end{corollary}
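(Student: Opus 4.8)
The plan is to combine the static estimates — Lemma~\ref{lem:deterministic} (size of the giant) and Proposition~\ref{cl:standardperc} (diameter of the non-giant clusters) — with a union bound over a sufficiently fine mesh of times, using the fact that in any time interval of length $O(1/\mu)$ only a few edges along any fixed path refresh. First I would discretise time: since the dynamical percolation configuration changes only at edge-refresh times, and each edge refreshes at rate $\mu$, it is natural to look at the configuration at times $t_j = j \cdot \delta/\mu$ for $j = 0,1,\dots, \lceil n^k/\delta\rceil$ for a small constant $\delta$, and separately control what happens between consecutive mesh points.

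For the mesh points themselves, at each fixed time $t_j$ the configuration $\eta_{t_j}$ is distributed as $\pi_p$ (stationarity), so Lemma~\ref{lem:deterministic} applied with $A = \Z_n^d$ (i.e.\ $\alpha = 1$) gives that $|\C_1(t_j)| \in ((\theta(p)-\epsilon/2)n^d, (\theta(p)+\epsilon/2)n^d)$ with probability at least $1 - \tfrac1c\exp(-c n^{d/(d+1)})$, and Proposition~\ref{cl:standardperc} with $r = c'\log n$ gives that $\max_{i\geq 2}\diam{\C_i(t_j)} \leq c'\log n$ with probability at least $1 - n^d \cdot n^{-cc'} - \exp(-cn^{d/(d+1)})$, which is $1 - o(n^{-k-2})$ once $c'$ is chosen large enough depending on $k$ and $c$. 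A union bound over the $O(n^k/\mu) \le O(n^{k+2})$ mesh points (using $\mu \ge$ a negative power of $n$, or more simply noting $1/\mu \le n^2$ suffices since $\mu$ can be taken $\ge$ any relevant lower bound; in the worst case one can always take at most $O(n^{k+2})$ mesh points) then shows that with probability $1 - o(1)$ all mesh-point configurations have a giant of the right size and only logarithmic small clusters.

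To pass from the mesh to all times $t \le n^k/\mu$, fix an interval $I_j = [t_j, t_{j+1}]$ of length $\delta/\mu$. The number of edges that refresh during $I_j$ is stochastically dominated by a sum of Poisson clocks; the key geometric point is that a macroscopic change in the giant — either shrinking it by more than $(\epsilon/2)n^d$ or creating a second cluster of diameter exceeding $c'\log n$ — requires many edge changes, and one shows that with overwhelming probability no such cascade occurs in a window of length $\delta/\mu$. Concretely, I would argue: (i) if a cluster of diameter $\ge c'\log n$ is present at some $t \in I_j$, then at one of the endpoints either such a cluster is already present, or it was built up from sub-logarithmic pieces by the $O(\text{edges refreshing in } I_j)$ changes, which requires $\gtrsim \log n$ simultaneous local refreshes near a fixed location — an event of probability $\le (\delta)^{\Omega(\log n)} \cdot (\text{poly } n) = o(n^{-k-2})$ per interval after a union bound over locations; (ii) similarly the giant's size is a function that changes by at most a bounded amount per edge flip restricted to components of bounded size, and a drop of $(\epsilon/2)n^d$ within $I_j$ would force $\Omega(n^d)$ edge changes, vastly more than the typical $O(\delta n^d)$, which for $\delta$ small enough is ruled out by a Chernoff bound with room to spare. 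Union bounding over the $O(n^{k+2})$ intervals finishes the argument.

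The main obstacle is step (iii), controlling the behaviour \emph{between} mesh points: one must rule out that a rare burst of refreshes in a short window either fragments the giant or glues together many small clusters into a large one. I expect the cleanest way to handle this is to choose $\delta$ small (depending on $d,p,\epsilon,k$) and combine a Poisson tail bound on the number of edges refreshing in each length-$\delta/\mu$ interval with the observation that, conditionally on that number being $O(\delta n^d)$, the configuration stays within a set of $O(\delta n^d)$ edge-flips of $\eta_{t_j}$, and that changing that few edges cannot simultaneously destroy a linear-size giant or create a $(c'\log n)$-diameter second cluster from the sub-logarithmic pieces guaranteed at $t_j$ — here one invokes the subcritical exponential decay (as in Proposition~\ref{cl:standardperc}) applied to the "new" edges only, noting a path of length $\ell$ using newly-opened edges has probability $\le (\delta')^{\ell}$ for small $\delta'$. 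This is somewhat delicate but entirely standard once the mesh is fixed; everything else is a routine union bound over the static estimates already proved in this section.
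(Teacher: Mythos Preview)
The paper leaves this corollary without proof; the intended argument is a direct union bound over the distinct percolation configurations that appear in $[0,n^k/\mu]$, and it is considerably simpler than what you propose. The point you are missing is that $\eta_t$ is \emph{piecewise constant}: it changes only at the edge-refresh times $0=s_0<s_1<\cdots$, of which there are $N$ in $[0,n^k/\mu]$, where $N$ is Poisson with mean $dn^{d+k}$. Moreover, conditioning on the refresh schedule (the ring times and which edge rings, but not the Bernoulli coin flips), each $\eta_{s_j}$ is still exactly $\pi_p$-distributed, because $\eta_{s_j}(e)$ equals either $\eta_0(e)$ or the most recent coin flip for $e$, and these are i.i.d.\ Bernoulli$(p)$ given the schedule. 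Hence, writing $B$ for the bad configuration event,
\[
\pr{\exists\,t\le n^k/\mu:\,B(\eta_t)}\;\le\;\E{N+1}\,q_n\;=\;(dn^{d+k}+1)\,q_n,
\]
where $q_n$ is the static bound coming from Lemma~\ref{lem:deterministic} and Proposition~\ref{cl:standardperc}. Taking $r=c\log n$ in Proposition~\ref{cl:standardperc} with $c$ large (depending on $d,k$) makes $q_n=o(n^{-d-k})$, and the result follows. Your ``between mesh points'' analysis is simply not needed.

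Your step~(ii) also contains a genuine error: the claim that a drop of $(\epsilon/2)n^d$ in $|\C_1|$ forces $\Omega(n^d)$ edge changes is false. Closing a \emph{single} edge can split the giant into two pieces each of size $\Theta(n^d)$, so $|\C_1(t)|$ is not a Lipschitz function of the configuration. To salvage your route one would need a separate ``no macroscopic bridges in the giant'' estimate, which is not established in the paper. Step~(i) is likewise shaky: one edge-opening can merge two diameter-$c'\log n$ clusters into one of diameter nearly $2c'\log n$, and iterating over the $\asymp\delta n^d$ refreshes in $I_j$ could in principle build long chains; making this rigorous requires care your sketch does not supply. All of this is bypassed once you take the refresh times themselves as the mesh.
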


\begin{remark}\label{rem:boundonsizefromdiam}
\rm{
Let $\partial A$ denote the edge boundary of a set $A\subseteq \Z^d$. This is how $\partial A$ will be used from now on.
Using then the obvious bound that $|\partial A| \leq (2d)|A|\leq 2d({\rm{diam}}(A))^d$ on the event of Corollary~\ref{cor:giantandsmall} we get that for all $i\geq 2$
\[
|\partial \C_i|\leq 2d|\C_i|\leq 2d(c\log n)^d.
\]
}	
\end{remark}

\section{Hitting the giant component}\label{sec:hitgiant}

In this section we give an upper bound on the time it takes the random walk to hit the giant component. From now on we fix $d\geq 2$ and $p>p_c(\Z^d)$,  and as before $X$ is the random walk on the dynamical percolation process where the edges refresh at rate $\mu$.

%

\emph{Notation}: For every $t>0$ we denote by $\GG_t$ the giant component of the dynamical percolation process $(\eta_t)$ breaking ties randomly. (As we saw~in Corollary~\ref{cor:giantandsmall} with high probability there are no ties in the time interval that we consider.)

\begin{proposition}[Annealed estimates]\label{pro:hittinggiant}
	There exists a stopping time $\sigma$ and $\alpha>0$ such that: 
	\begin{enumerate}
		\item [\rm{(i)}] $\min_{x,\eta_0}\prstart{\frac{11d\log n}{\mu}\leq \sigma\leq \frac{(\log n)^{3d+8}}{\mu}}{x,\eta_0} =1-o(1)$ as $n\to \infty$ and 

\item[\rm{(ii)}] $\min_{x,\eta_0}\prstart{X_{\sigma} \in \GG_\sigma}{x,\eta_0} \geq \alpha$.
	\end{enumerate}

\end{proposition}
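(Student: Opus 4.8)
The plan is to define $\sigma$ as the first time $X$ lands on an edge that has just refreshed to open and whose newly-created cluster is the giant component. More precisely, I would track the refresh clock of the edge adjacent to the walker in the direction the walker is attempting to move; whenever the walker attempts a move along an edge that has refreshed "recently" (within the last $O(1/\mu)$ units of time, so that with probability bounded below the whole local cluster structure has been regenerated and the walker sits in the giant with probability $\asymp\theta(p)$), we declare a success. Since the walker attempts moves at rate $1$ and each attempted move is along a uniformly chosen one of the $2d$ incident edges, and each such edge independently refreshes at rate $\mu\le 1/2$, in any time window of length $\Theta(1/\mu)$ the walker has a probability bounded below (depending only on $d$) of making such a "fresh-edge" move. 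Conditionally on making one, by Corollary~\ref{cor:giantandsmall} / Lemma~\ref{lem:deterministic} the walker is in $\GG$ with probability at least $\theta(p)-\epsilon$ — this gives (ii) with $\alpha=\theta(p)-\epsilon$ if $\sigma$ is well-defined, i.e.\ if such a move happens before the time cap.

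For (i): the lower bound $\sigma\ge 11d\log n/\mu$ is a matter of choosing the "recently refreshed" window and the definition of success so that a success cannot occur too early; alternatively, and more robustly, I would simply build a $\Theta(\log n/\mu)$ delay into the definition of $\sigma$ (wait deterministically that long, then start looking for a fresh-edge move), so the lower bound is automatic. For the upper bound, I need that within $(\log n)^{3d+8}/\mu$ time a fresh-edge move almost surely occurs: partition $[0,(\log n)^{3d+8}/\mu]$ into $\asymp(\log n)^{3d+8}$ disjoint windows of length $\asymp 1/\mu$, and in each window the conditional probability (given the past) of a fresh-edge move is bounded below by a constant $c(d)>0$; hence the probability that none occurs is at most $(1-c)^{(\log n)^{3d+8}}=o(n^{-k})$ for any $k$, in particular $o(1)$. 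This also upgrades the statement: $\sigma$ is bounded by the cap with probability $1-o(1)$ uniformly in $x,\eta_0$, which is exactly what's claimed.

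The main obstacle is making the event "$X$ is in the giant at time $\sigma$" genuinely have probability $\ge\alpha$ uniformly over $\eta_0$, given that the environment at the success time is \emph{not} a fresh percolation configuration but only partially refreshed. The clean way around this is to use the following: at a fresh-edge move time $s$, condition on the sigma-field generated by the environment \emph{outside} a box $\B(X_s, \ell)$ with $\ell=\ell_n$ growing like a large power of $\log n$, together with the walk's trajectory up to $s$; inside that box, provided every edge in the box has refreshed at least once in a window of length $\Theta(\log n/\mu)$ preceding $s$ (which happens with probability $1-|\B|e^{-\Theta(\log n)}=1-o(1)$ after a union bound over the at most $\mathrm{poly}(\log n)\cdot n^{k}$ relevant times — here I'd use a slightly larger time-cap bookkeeping), the configuration in the box is a genuine i.i.d.\ percolation configuration independent of the conditioning. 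Then by the Grimmett–Marstrand-type local-uniqueness statements already invoked in Section~\ref{sec:prelim} (specifically the estimates behind Lemma~\ref{lem:deterministic} and Corollary~\ref{cor:giantandsmall}), with probability $\ge\theta(p)-\epsilon$ the vertex $X_s$ lies in a cluster of diameter $\ge\ell/4$ inside the box, and with probability $1-o(1)$ any such cluster is part of the global giant $\GG_s$; combining gives $\prstart{X_\sigma\in\GG_\sigma}{x,\eta_0}\ge\theta(p)-2\epsilon=:\alpha>0$. The delicate bookkeeping is ensuring the "box fully refreshed" event and the "unique macroscopic cluster" event hold \emph{simultaneously at the random time} $\sigma$; this is handled by a union bound over a deterministic mesh of times of spacing $\Theta(1/\mu)$ in $[0,(\log n)^{3d+8}/\mu]$, of which there are only $\mathrm{poly}(\log n)$ many, so the $e^{-\Theta(\log n)}$ and $e^{-cn^{d/(d+1)}}$ error terms survive the union bound comfortably.
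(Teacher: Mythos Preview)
Your strategy for (i) is fine, but the heart of (ii) has a genuine gap. The central claim --- that at the fresh-edge move time $s$, conditioning on the walk trajectory up to $s$ together with the environment outside $\B(X_s,\ell)$, the configuration inside the box is i.i.d.\ percolation --- is false. The walk is at $X_s$ at time $s$, so in the immediate past it has attempted moves along edges incident to (or near) $X_s$, and each such attempt reveals whether that edge is currently open or closed; those edges have not refreshed since. More generally, the edges whose time-$s$ state is determined by the trajectory are exactly those the walk tried to cross \emph{after} their last refresh, and this set always contains edges adjacent to $X_s$. Conditioning on the trajectory therefore biases the local environment around the walker in a way you have not controlled: for instance, if the walker has been stuck at a vertex with all incident edges closed, those edges are still closed at time $s$ and this is visible in the trajectory. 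The independence underlying your $\theta(p)-\epsilon$ bound is simply not there, and it is not clear your $\sigma$ gives any uniform positive lower bound on $\prstart{X_\sigma\in\GG_\sigma}{x,\eta_0}$.

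The paper gets around this with a geometric decorrelation absent from your proposal. It sets $r\asymp(\log n)^{d+2}$, defines $T_1$ as the first exit from $\B(X_{11d\log n/\mu},r)$, iterates to get $T_i$, and takes $\sigma=\tau\wedge T_{(\log n)^{d+2}}$ where $\tau$ is the first hitting time of the giant after the delay. For (ii) the key observation is that during $[11d\log n/\mu,T_1)$ the walker is confined to the box, so it has not touched any edge in the half-space $H$ on the far side of the exit face; if in addition $T_1\ge c\log n/(4d\mu)$, every edge in $H$ has refreshed during this interval, so its time-$T_1$ state is genuinely i.i.d.\ Bernoulli($p$) given the trajectory. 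Half-space percolation at $p>p_c$ then yields a uniform positive probability that $X_{T_1}$ is in a cluster of diameter $\ge n/3$, hence in $\GG_{T_1}$ by Corollary~\ref{cor:giantandsmall}. Securing $T_1\ge c\log n/(4d\mu)$ with positive probability is done via a dichotomy: either $\tau<T_1$ already occurs with probability $\ge(\log n)^{-(d+2)}$ (then iterate over $(\log n)^{d+2}$ rounds), or not, in which case with high probability the walk is confined to non-giant clusters of diameter $\le c\log n$ between successive boundary-edge openings, so the exit from the box requires $\asymp(\log n)^{d+1}$ such openings, each dominated below by an exponential of rate $\asymp(\log n)^d\mu$. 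This spatial separation between where the walk has been and where fresh percolation is needed is the missing idea in your approach.
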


\begin{proof}[\bf Proof]

We let $\tau$ be the first time after $11d\log n/\mu$ that $X$ hits the giant component, i.e.
 $$\tau = \inf\left\{t\geq 11d\frac{\log n}{\mu}: \, X_t \in \GG_t\right\}.$$
We now define a sequence of stopping times  by setting 
	$r=2(c\log n)^{d+2}$ for a constant $c$ to be determined, $T_0=0$  and inductively for all $i\geq 0$
\[
T_{i+1} = \inf\left\{t\geq T_i+11d\frac{\log n}{\mu}: \, X_t \notin \B\left(X_{T_i+11d\log n/\mu},r\right) \right\}.
\]
Finally we set $\sigma = \tau\wedge T_{(\log n)^{d+2}}$. We will now prove that $\sigma$ satisfies (i) and (ii) of the statement of the proposition.

{\emph{Proof of} (i).}  By the strong Markov property we obtain for all $n$ large enough and all $x,\eta_1$ 
\begin{align}\label{eq:log11}
\nonumber&\prstart{T_{(\log n)^{d+2}} \leq \frac{(\log n)^{3d+8}}{\mu}}{x,\eta_1} \geq \prstart{T_{i} - T_{i-1} < \log n \cdot \frac{r^2}{\mu}, \, \forall \,1\leq i\leq (\log n)^{d+2}}{x,\eta_1}\\
	&\geq  \left(\min_{x_0,\eta_0}\prcond{T_1 -T_0 < \log n \cdot \frac{r^2}{\mu}}{X_{T_0}=x_0,\eta_{T_0}=\eta_0}{}\right)^{(\log n)^{d+2}}.
	\end{align}
	By~\eqref{eq:hitbigset} of Theorem~\ref{thm:recall} applied to the torus $\Z_{5r}^d$ we get that if $t=c'\cdot r^2/\mu$, where~$c'$ is a positive constant, then starting from any $x_0\in \B(x,r)$ and any bond configuration, the walk exits the ball $\B(x,r)$ by time $t$ with constant probability~$c_1$. Hence the same is true for the process $X$ on $\Z_n^d$ for all starting states $x_0$ and configurations~$\eta_0$.

	 Using this uniform bound over all $\eta_0$ and all $x_0\in \B(x,r)$, we can perform $\log n/c'$ independent experiments to deduce
\[
\prcond{T_1-T_0<\log n\cdot \frac{r^2}{\mu}}{X_{T_0}=x_0,\eta_{T_0}=\eta_0}{} \geq 1- (1-c_1)^{\log n/c'},
\]
and hence substituting this into~\eqref{eq:log11} we finally get
\[
\prstart{T_{(\log n)^{d+2}} \leq \frac{(\log n)^{3d+8}}{\mu}}{x_0,\eta_0}  = 1-o(1) \text{ as } n\to \infty
\]
and this completes the proof of (i).

{\emph{Proof of} (ii).} 
We fix $x,\eta_0$ and we consider two cases:

(1) $\prstart{\tau<T_1}{x,\eta_0}>\frac{1}{(\log n)^{d+2}}$ or

(2) $\prstart{\tau< T_1}{x,\eta_0}\leq \frac{1}{(\log n)^{d+2}}$.

It suffices to prove that under condition (2), there is a  constant $\beta>0$ so that $\prstart{X_{T_1}\in \GG_{T_1}}{x,\eta_0}\geq \beta$. Indeed, this will then imply that 
\begin{align}\label{eq:minbound}
\min_{y,\eta_1}\prstart{\tau\leq T_1}{y,\eta_1} \geq \frac{1}{(\log n)^{d+2}}.
\end{align}
Therefore, in both cases ((1) and (2)) we get that~\eqref{eq:minbound} is satisfied, and hence by the strong Markov property
\begin{align*}
\prstart{\tau> T_{(\log n)^{d+2}}}{x,\eta_0} \leq \left(\max_{y,\eta_1}\prstart{\tau>T_1}{y,\eta_1}  \right)^{(\log n)^{d+2}} = \left(1-\min_{y,\eta_1}\prstart{\tau\leq T_1}{y,\eta_1} \right)^{(\log n)^{d+2}} \leq \frac{1}{e},
\end{align*}
which immediately implies that $\min_{y,\eta_1}\prstart{X_\sigma\in \GG_{\sigma}}{y,\eta_1} \geq 1-e^{-1}$ as claimed.
So we now turn to prove that under (2) there exists a positive constant $\beta$ so that 
\begin{align}\label{eq:goal}
	\prstart{X_{T_1}\in \GG_{T_1}}{x,\eta_0}\geq \beta.
\end{align}
Taking $c$ in the definition of $r$ satisfying $c>50d^2$ we have
\begin{align*}
	\prstart{X_{T_1}\in \GG_{T_1}}{x,\eta_0} 
	\geq \prcond{X_{T_1}\in \GG_{T_1}}{T_1\geq \frac{c\log n}{4d\mu}}{x,\eta_0}  \prstart{T_1\geq \frac{c\log n}{4d\mu}}{x,\eta_0}.
\end{align*}
Since the critical probability for a half-space equals $p_c(\Z^d)$ (as explained right before Theorem 7.35 in \cite{Grimmett}) and by time $\frac{c\log n}{4d\mu}$ all edges in the torus have refreshed after time $T_0$ (with high probability for $c>50d^2$), we infer that, given $ T_1\geq \frac{c\log n}{4d\mu}$, with probability bounded away from 0,  the component of $X_{T_1}$ at time $T_1$ has diameter at least $n/3$. It then follows from Corollary~\ref{cor:giantandsmall} that the first term on the right-hand side of the last display is bounded below by a positive constant.

So it now suffices to prove
\[
\prstart{\tau\geq T_1, T_1\geq \frac{c\log n}{4d\mu}}{x,\eta_0}\geq \beta'>0.
\]
We denote by $\C_t$ the cluster of the walk at time $t$, i.e.\ it is the connected component of the percolation configuration such that $X_t\in \C_t$. Next we define inductively a sequence of stopping times $S_i$ as follows: $S_0=11d\log n/\mu$ and for $i\geq 0$ we let~$S_{i+1}$ be the first time after time $S_i$ that an edge opens on the boundary of $\C_{S_i}$. For all $i\geq 0$ we define 
\[
A_i=\left\{  {\rm{diam}}(\C_{S_i})\leq c\log n\right\} \quad \text{ and } \quad A = \bigcap_{0\leq i\leq (c\log n)^{d+1} -1}A_i.
\]
On the event $A$ we have $T_1\geq S_{(c\log n)^{d+1}}$, since $r=2(c\log n)^{d+2}$ and by the triangle inequality we have for all $i\leq (c\log n)^{d+1} -1$
\begin{align}\label{eq:distance}
	d\left(X_{\frac{11d\log n}{\mu}}, X_{S_{i}}\right) \leq i (c\log n).
\end{align}
We now have
\begin{align}\label{eq:tautau1}
	\prstart{\tau\geq T_1, A^c}{x,\eta_0} = \sum_{0\leq i\leq (c\log n)^{d+1}-1}\prstart{\tau\geq T_1, \cap_{j<i}A_j, A_i^c}{x,\eta_0}.
\end{align}
Note that on the event $\cap_{j<i}A_j\cap \{\tau\geq T_1\}$, we have that $\C_{S_{i}}$ cannot be the giant component, since by time $S_i$ using~\eqref{eq:distance} the random walk has only moved distance at most $i c \log n$ from $X_{11d\log n/\mu}$, and hence cannot have reached the boundary of the box $\B(X_{11d\log n/\mu},r)$. Therefore, choosing $c$ sufficiently large by Proposition~\ref{cl:standardperc} and large deviations for a Poisson random variable we get
\begin{align*}
	\prstart{\tau\geq T_1, \cap_{j<i}A_j, A_i^c}{x,\eta_0} \leq \frac{1}{n},
\end{align*}
and hence plugging this upper bound into~\eqref{eq:tautau1} gives
\begin{align}\label{eq:acompl}	
\prstart{\tau\geq T_1, A^c}{x,\eta_0} \leq\frac{(c\log n)^{d+1}}{n}.
\end{align}
So under the assumption that $\prstart{\tau<T_1}{x,\eta_0}\leq 1/(\log n)^{d+2}$ and~\eqref{eq:acompl} we have for all $n$ sufficiently large
\begin{align}\label{eq:aonly}
	\prstart{A^c}{x,\eta_0} = \prstart{A^c, \tau\geq T_1}{x,\eta_0} + \prstart{A^c, \tau<T_1}{x,\eta_0} \leq
	\frac{2}{(\log n)^{d+2}}.
\end{align}
Setting $Y_i = S_{i}-S_{i-1}$ we now get
\begin{align*}
	\prstart{T_1\geq \frac{c\log n}{4d\mu}}{x,\eta_0} &\geq \prstart{ A, S_{(c\log n)^{d+1}}\geq \frac{c\log n}{4d\mu}}{x,\eta_0} \geq \prstart{\sum_{i=1}^{(c\log n)^{d+1}}Y_i\geq \frac{c\log n}{4d\mu}, A}{x,\eta_0}.
	\end{align*}
	One can define an exponential random variable $E_{(c\log n)^{d+1}}$ with parameter $2d(c\log n)^d\mu$
 such that (1) $Y_{(c\log n)^{d+1}} \ge E_{(c\log n)^{d+1}}$ on
$A_{(c\log n)^{d+1}-1}$ and 
\newline
(2) $E_{(c\log n)^{d+1}}$ is independent of
$\{A_0,\ldots,A_{(c\log n)^{d+1}-1},Y_1\ldots Y_{(c\log n)^{d+1}-1}\}$.
Therefore we deduce
\begin{align*}
	\mathbb{P}_{x,\eta_0}\bigg(\sum_{i=1}^{(c\log n)^{d+1}}&Y_i\geq \frac{c\log n}{4d\mu}, \,A\bigg)\geq \mathbb{P}_{x,\eta_0}\bigg(E_{(c\log n)^{d+1}}+\sum_{i=1}^{(c\log n)^{d+1}-1}Y_i\geq \frac{c\log n}{4d\mu}, \bigcap_{0\leq i<(c\log n)^{d+1}-1}A_i\bigg) \\
	&\quad \quad \quad \quad \quad \quad \quad \quad \quad\quad -\prstart{A_{(c\log n)^{d+1}-1}^c}{x,\eta}\\
	\geq &\prstart{E_{(c\log n)^{d+1}}+\sum_{i=1}^{(c\log n)^{d+1}-1}Y_i\geq \frac{c\log n}{4d\mu}, \bigcap_{0\leq i<(c\log n)^{d+1}-1}A_i}{x,\eta_0} - \frac{2}{(\log n)^{d+2}},
\end{align*}
where for the last inequality we used~\eqref{eq:aonly}. 
Continuing in the same way, for each $i$,
one can define an exponential random variable $E_i$ with parameter $2d(c\log n)^d\mu$
such that (1) $Y_i \geq  E_i$ on $A_{i-1}$
and (2) $E_i$ is independent of
$\{A_0,\ldots,A_{i-1},Y_1,\ldots, Y_{i-1},E_{i+1}, \ldots, E_{(c\log n)^{d+1}}\}$.
We therefore obtain
\begin{align*}
	\prstart{\sum_{i=1}^{(c\log n)^{d+1}}Y_i\geq \frac{c\log n}{4d\mu}, A}{x,\eta_0} \geq \pr{\sum_{i=1}^{(c\log n)^{d+1}} E_i\geq 
	\frac{c\log n}{4d\mu}} - \frac{c'}{\log n},
\end{align*}
where the $E_i$'s are i.i.d.\ exponential random variables of parameter $2d(c\log n)^d\mu$.
By Chebyshev's inequality, we finally conclude that 
\[
\pr{\sum_{i=1}^{(c\log n)^{d+1}} E_i\geq 
	\frac{c\log n}{4d\mu}} = 1-o(1) \quad \text{ as } n\to \infty
\]
and this finishes the proof.
\end{proof}

We now state and prove a lemma that will be used later on in the paper.

\begin{lemma}\label{lem:stay}
	Let $\sigma$ and $\alpha$ be as in the statement of Proposition~\ref{pro:hittinggiant}. Then  as $n\to \infty$ 
	\[
	\min_{x,\eta_0}\prstart{X_t\in \GG_t, \,\, \forall \, t\in \left[\sigma, \sigma+\frac{1}{(\log n)^{d+1}\mu}\right]}{x,\eta_0} \geq \alpha ( 1-o(1)).
	\]
\end{lemma}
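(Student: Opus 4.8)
The goal is to upgrade the statement ``$X_\sigma \in \GG_\sigma$ with probability $\geq \alpha$'' (Proposition~\ref{pro:hittinggiant}(ii)) to ``$X_t \in \GG_t$ for \emph{all} $t$ in a short window $[\sigma, \sigma + \frac{1}{(\log n)^{d+1}\mu}]$ with probability $\geq \alpha(1-o(1))$.'' The plan is to show that, conditionally on the event $\{X_\sigma \in \GG_\sigma\}$, the probability that the walk leaves the giant during this window is $o(1)$, uniformly in the starting data. Two things can make $X_t$ leave $\GG_t$: either the walk steps off the giant onto a small cluster (impossible in one step, since a step only happens along an open edge, so the walk stays in its current cluster), or an edge on the giant cluster's boundary (or somewhere relevant) refreshes in a way that changes the cluster structure so that $X_t$ is no longer in the (possibly new) giant. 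The key observation is that over a time window of length $\ell := \frac{1}{(\log n)^{d+1}\mu}$, the \emph{number of edges that refresh at all} in the whole torus is of order $\mu n^d \cdot \ell = n^d/(\log n)^{d+1}$ in expectation, which is a vanishing fraction of all edges, and by a standard concentration/large-deviation bound (Poisson) the refreshed edges form a sparse random set with high probability.

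\textbf{Key steps.} First I would condition on $\sigma$ and on the environment and walk position at time $\sigma$, restricting to the high-probability event of Corollary~\ref{cor:giantandsmall} (so that throughout $[0, n^k/\mu]$ the giant has size in $((\theta(p)-\epsilon)n^d, (\theta(p)+\epsilon)n^d)$ and all other clusters have diameter $\leq c\log n$), and to the event $\{X_\sigma \in \GG_\sigma\}$. Second, I would argue that on the window $[\sigma, \sigma+\ell]$ the walk cannot leave the cluster it currently sits in \emph{unless} that cluster itself changes: a single walk step is along an open edge of the current configuration, hence preserves the connected component. So the only way $X_t \notin \GG_t$ is that the component containing $X_t$ ceases to be the giant, which requires an edge-refresh event that either disconnects the giant near $X_t$ or causes $\GG$ to shift. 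Third — and this is the crux — I would bound the probability that \emph{any} edge within distance, say, $2\ell$ (in graph distance, a crude bound on how far the walk can have traveled, i.e.\ within a ball of radius $O(\ell + \log n)$ around $X_\sigma$) refreshes during $[\sigma, \sigma+\ell]$. Actually, more simply: the probability that a fixed edge refreshes in a time interval of length $\ell$ is $1 - e^{-\mu\ell} \leq \mu \ell = 1/(\log n)^{d+1}$. The walk traverses at most, with overwhelming probability, $O(\log n)$ steps in this window (the walk's clock rings at rate $\leq 1$, and $\ell \cdot 1 = 1/(\mu (\log n)^{d+1})$; but $1/\mu$ may be huge, so this needs care — see the obstacle below). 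The set of edges ``relevant'' to whether $X_t$ stays in $\GG_t$ is the edge-boundary of the giant together with edges the walk might cross; using Corollary~\ref{cor:giantandsmall} and Remark~\ref{rem:boundonsizefromdiam} one controls the small clusters, and the giant's boundary has size $O(n^d)$, so a union bound over all $O(n^d)$ edges times the per-edge refresh probability $\mu\ell$ gives... $n^d/(\log n)^{d+1}$, which is \emph{not} $o(1)$. So the naive global union bound fails, and one must localize.

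\textbf{The localization and the main obstacle.} The fix is to work only with edges near $X_\sigma$. Given $X_\sigma \in \GG_\sigma$, the relevant question is whether the \emph{local} structure of the giant near $X_t$ survives; since the walk moves at most $O(\ell + \text{(number of walk steps)})$ away from $X_\sigma$, and since we only need the walk to remain in \emph{some} cluster that is still the giant, it suffices that no edge in a ball $\B(X_\sigma, R)$ refreshes during the window, where $R$ is a generous upper bound on the walk's displacement. The number of edges in such a ball is $O(R^d)$, and the per-edge refresh probability is $\leq \mu\ell = 1/(\log n)^{d+1}$, so we need $R^d = o((\log n)^{d+1})$, i.e.\ $R = o(\log n)$. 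The obstacle is therefore to show the walk's displacement over $[\sigma,\sigma+\ell]$ is $o(\log n)$ with probability $1-o(1)$: the walk makes a Poisson$(\ell) $-ish number of \emph{attempted} moves, but $\ell = 1/(\mu(\log n)^{d+1})$ and $1/\mu$ can be polynomially large, so the number of attempted moves is large. However — and this is the point — with most edges around $X_\sigma$ being \emph{closed} (this is where $p$ need not be close to $1$ creates subtlety, but at least the walk can only move along open edges, and once it has explored its small local cluster it is ``stuck'' until an edge refreshes), the walk is confined to its current cluster, whose diameter is at most $\diam(\GG)$ (useless) — so in fact I should \emph{not} try to bound displacement by the number of attempted moves. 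Instead: condition on no edge in $\B(X_\sigma, c\log n)$, say, refreshing during $[\sigma, \sigma + \ell]$ — probability $\geq 1 - O((\log n)^d \cdot \mu\ell) = 1 - O(1/\log n) = 1-o(1)$. On this event the percolation configuration restricted to $\B(X_\sigma, c\log n)$ is frozen, so $X_t$ stays in the same cluster as $X_\sigma$, which is the giant $\GG_\sigma$ — and by Corollary~\ref{cor:giantandsmall} $\GG_t$ remains the unique giant (its identity can't change in this window without a refresh affecting it, but even if far-away refreshes occur, the giant remains giant by the size bound). Hence $X_t \in \GG_\sigma \cap \GG_t$. Combining with Proposition~\ref{pro:hittinggiant}(ii), which gives $\prstart{X_\sigma\in\GG_\sigma}{x,\eta_0}\geq\alpha$, and the fact that the ``no local refresh'' event has conditional probability $1-o(1)$ uniformly, we get $\prstart{X_t\in\GG_t\ \forall t\in[\sigma,\sigma+\ell]}{x,\eta_0}\geq \alpha(1-o(1))$. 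The only genuine subtlety to nail down carefully is that the walk, confined to a frozen neighborhood for all of $[\sigma,\sigma+\ell]$, cannot escape that neighborhood — which holds since escaping requires crossing an edge, and all edges inside $\B(X_\sigma, c\log n)$ keep their (open or closed) status, so the walk never reaches $\partial \B(X_\sigma, c\log n)$ within this window as long as $c\log n$ exceeds the diameter of $X_\sigma$'s cluster, true on the event of Corollary~\ref{cor:giantandsmall} only for the \emph{small} clusters; for the giant we instead note the walk cannot leave the giant at all while its boundary edges near it are frozen. I would make this precise by taking $c$ large enough that $c\log n$ dominates $\diam(\C_i)$ for all small $i$ and handling the giant case by the boundary-freezing argument.
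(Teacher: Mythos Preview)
Your localization argument has a genuine gap. You propose to condition on the event that no edge in the \emph{fixed} ball $\B(X_\sigma, c\log n)$ refreshes during $[\sigma,\sigma+\ell]$, where $\ell = 1/((\log n)^{d+1}\mu)$, and then claim that on this event ``$X_t$ stays in the same cluster as $X_\sigma$, which is the giant $\GG_\sigma$.'' This is false: since $X_\sigma\in\GG_\sigma$ and the giant has diameter of order $n$, the walk can travel along open edges of the giant and exit $\B(X_\sigma, c\log n)$ long before time $\sigma+\ell$ (note $\ell$ can be polynomially large in $n$ when $\mu$ is small, so the walk makes many successful steps). Once $X_t$ is outside your frozen ball, an edge near $X_t$---but outside $\B(X_\sigma, c\log n)$---can close and disconnect $X_t$ from the rest of the giant. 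Your event does not preclude this. Your attempted patch at the end (``the walk cannot leave the giant at all while its boundary edges near it are frozen'') begs the question: ``near it'' must mean near $X_t$, not near $X_\sigma$, and you have not controlled refreshes near $X_t$.

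The paper fixes this by working with a \emph{moving} ball. It splits the bad event into (A) some edge within distance $c\log n$ of the current position $X_t$ refreshes during the window, and (B) an edge at distance $>c\log n$ from $X_t$ refreshes and disconnects $X_t$ from $\GG_t$. For (B), if such a far edge disconnects $X_t$, then $X_t$'s new (non-giant) cluster contains both $X_t$ and an endpoint of that edge, hence has diameter $>c\log n$, which is ruled out by Corollary~\ref{cor:giantandsmall}. For (A), the key device you are missing is a Poisson thinning argument: consider the superposed point process of times at which either the walk's clock rings or some edge within $c\log n$ of the walk's \emph{current} position refreshes. By competition of exponentials this is a Poisson process of rate $1 + c_1(\log n)^d\mu$, and each point is a ``nearby refresh'' with probability $c_1(\log n)^d\mu/(1+c_1(\log n)^d\mu)$; thinning gives that nearby-refresh times form a Poisson process of rate $c_1(\log n)^d\mu$, so the probability one occurs in an interval of length $\ell$ is $1-\exp(-c_1/\log n)=o(1)$. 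This is exactly the computation you wrote down for the \emph{fixed} ball, but it only becomes valid once the ball is allowed to move with the walk.
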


\begin{proof}[\bf Proof]

We fix $x,\eta_0$.
From Proposition~\ref{pro:hittinggiant} we have
\begin{align*}
	&\prstart{\exists \, t\in \left[\sigma, \sigma+\frac{1}{(\log n)^{d+1}\mu}\right] :\, X_t\notin \GG_t}{x,\eta_0} \\&=\prstart{X_\sigma\notin \GG_\sigma}{x,\eta_0}
	+ \prstart{X_\sigma\in \GG_\sigma, \exists\, t \in \left(\sigma,\sigma+\frac{1}{(\log n)^{d+1}\mu}\right] :\, X_t\notin \GG_t}{x,\eta_0}\\
	&\leq 1-\alpha + \prstart{X_\sigma\in \GG_\sigma, \exists\, t \in \left(\sigma,\sigma+\frac{1}{(\log n)^{d+1}\mu}\right] :\, X_t\notin \GG_t}{x,\eta_0}.
\end{align*}
Let $\tau$ be the first time that all edges refresh at least once. Thus after time $\tau$ the percolation configuration is sampled according to $\pi_p$. We then have $\prstart{\tau\leq (d+1)\log n/\mu}{x,\eta_0} = 1-o(1)$, and hence from Proposition~\ref{pro:hittinggiant} we get
\[
\prstart{\sigma\geq \tau }{x,\eta_0} = 1-o(1).
\]
This together with Corollary~\ref{cor:giantandsmall} now gives as $n\to\infty$
\begin{align}\label{eq:smallbigsigma}
	\prstart{\forall\, t \in \left[\sigma,\sigma+\frac{1}{(\log n)^{d+1}\mu}\right]: \, |\GG_t|\in (\theta(p) n^d/2, 3\theta(p) n^d/2), \,{\rm{diam}}(\C_i(t))\leq c \log n, \forall \, i\geq 2}{x,\eta_0} \to 1,
\end{align}
where $c$ comes from Corollary~\ref{cor:giantandsmall}.
We now define an event $A$ as follows
\[
A=\left\{ \exists\, t\in \left[\sigma, \sigma+\frac{1}{(\log n)^{d+1}\mu}\right] \text{ and an edge } e:\, d(X_t, e) \leq c\log n \text{ and } e \text{ refreshes at time } t   \right\}.
\]
We also define $B$ to be the event that there exists a time $t\in [\sigma, \sigma+ 1/((\log n)^{d+1}\mu)]$ and an edge~$e$ such that $d(X_t,e)>c\log n$, the edge $e$ updates at time $t$ and this update disconnects $X_t$ from $\GG_t$. Then we have
\[
\prstart{X_\sigma\in \GG_\sigma, \exists \, t\in \left(\sigma, \sigma+\frac{1}{(\log n)^{d+1}\mu}\right]: \, X_t\notin \GG_t}{x,\eta_0}\leq \prstart{X_\sigma \in \GG_\sigma, A}{x,\eta_0} + \prstart{X_\sigma\in \GG_\sigma, B}{x,\eta_0}.
\]
We start by bounding the second probability above. From~\eqref{eq:smallbigsigma} we obtain as $n\to\infty$
\begin{align*}
	\prstart{X_\sigma\in \GG_\sigma, B}{x,\eta_0} \leq \pr{\exists \, t\in \left[\sigma, \sigma+\frac{1}{(\log n)^{d+1}\mu}\right], \exists\, i\geq 2: \, {\rm{diam}}(\C_i(t))\geq c\log n} = o(1).
\end{align*}
It now remains to show that $\prstart{A}{x,\eta_0}=o(1)$ as $n\to\infty$. We now let $\tau_0=\sigma$ and for all $i\geq 1$ we define~$\tau_i$ to be the time increment between the $(i-1)$-st time and the $i$-th time after time~$\sigma$ that either $X$ attempts a jump or an edge within distance $c\log n$ from $X$ refreshes. Then $\tau_i \sim \text{Exp}(1+c_1(\log n)^d\mu)$ for a positive constant $c_1$ and they are independent. These times define a Poisson process of rate $1+c_1(\log n)^d \mu$. 
Using basic properties of exponential variables, the probability that at a point of this Poisson process an edge is refreshed~is $$\frac{c_1(\log n)^d\mu}{1+c_1(\log n)^d\mu}.$$ Therefore, by the thinning property of Poisson processes, the times at which edges within $c\log n$ from $X$ refresh constitute a Poisson process $\mathcal{N}$ of rate $c_1 (\log n)^d\mu$. So we now obtain
\begin{align*}
	\prstart{A}{x,\eta_0} =\pr{\mathcal{N}\left[0,\frac{1}{(\log n)^{d+1}\mu}\right] \geq 1} = 1 - \exp\left(-\frac{c_1}{\log n} \right) = o(1) \text{ as } n\to \infty
\end{align*}
and this concludes the proof.
\end{proof}
%

\section{Good and excellent times}\label{sec:goodtimes}

As we already noted in Remark~\ref{rem:13} we are going to consider the case where $1/\mu > (\log n)^{d+2}$.

We will discretise time by observing the walk $X$ at integer times.  When we fix the environment at all times to be $\eta$, then we obtain a discrete time  Markov chain with time inhomogeneous transition probabilities
\[
p_t^\eta(x,y)=\prcond{X_{t+1}=y}{X_t=x}{\eta} \quad \forall \, x,y\in \Z_n^d, \, t\in \N.
\]
Let $(S_t)_{t\in \N}$ be the Doob transform of the evolving sets associated to this time inhomogeneous Markov chain as defined in Section~\ref{sec:evolve}. Since from now on we will mainly work with the Doob transform of the evolving sets, unless there is confusion, we will write~$\mathbb{P}$ instead of $\widehat{\mathbb{P}}$.

If $G$ is a subgraph of $\Z_n^d$ and $S\subseteq V(G)$, we write $\partial_G S$ for the edge boundary of $S$ in $G$, i.e.\ the set of edges of $G$ with one endpoint of $S$ and the other one in $V(G)\setminus S$.

We note that for every $t$, $\eta_t$ is a subgraph of $\Z_n^d$ with vertex set $\Z_n^d$. 

\begin{definition}\label{def:goodexcellent}
\rm{
	We call an integer time $t$ \emph{good} if $|S_t\cap \GG_t| \geq \tfrac{|S_t|}{(\log n)^{4d+12}}$. We call a good time $t$ \emph{excellent} if 
	\[
	\int_t^{t+1}\left|\partial_{\eta_s}S_t  \right|\,ds \equiv
	\sum_{x\in S_t}\sum_{y\in S_t^c} \int_{t}^{t+1} \eta_s(x,y)\,ds \geq \frac{|\partial_{\eta_t}S_t|}{2},
	\]
	where $\eta_s(x,y)=0$ if $(x,y)\notin E(\Z_n^d)$.
	For all $a \in \N$ we let $G(a)$ and $G_e(a)$ be the set of good and excellent times $t$ respectively with $0\leq t\leq (\log n)^{a}\left(n^2+\tfrac{1}{\mu}\right)$.
	}
\end{definition}
 As we already explained in the Introduction, we will obtain a strong drift for the size of the evolving set at excellent times. So we need to ensure that there are enough excellent times. We start by showing that there is a large number of good times.  More formally we have the following:

 \begin{lemma}\label{lem:goodtimesc}
For all $\gamma\in \N$ and $\alpha>0$,	there exists $n_0$ so that for all $n\geq n_0$, all starting points and configurations $x, \eta_0$ we have
	\[
	\prstart{|G(8d+26+\gamma)|\geq (\log n)^{\gamma}\cdot \left(n^2+\frac{1}{\mu}\right)}{x,\eta_0}  \geq  1 - \frac{1}{n^\alpha}.
	\]
 \end{lemma}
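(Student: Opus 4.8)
\textbf{Proof plan for Lemma~\ref{lem:goodtimesc}.}

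The plan is to exploit the coupling of Theorem~\ref{thm:coupling} between the walk $X$ and the Doob transform $(S_t)$ of the evolving set, together with the estimates on hitting the giant component from Section~\ref{sec:hitgiant}. The key observation is that a time $t$ fails to be good precisely when $|S_t\cap\GG_t|<|S_t|/(\log n)^{4d+12}$; conditionally on $(S_i)_{i\le t}$, the walk $X_t$ is uniform on $S_t$, so the probability (over the coupling) that $X_t\in\GG_t$ is exactly $|S_t\cap\GG_t|/|S_t|$. Hence if $t$ is not good, then $\prcond{X_t\in\GG_t}{\F_t}{}<(\log n)^{-(4d+12)}$. More importantly, around a good time we can say something about a whole window: by Lemma~\ref{lem:stay}, once $X$ hits $\GG$ at some time $\sigma$ it stays in the giant for an interval of length $1/((\log n)^{d+1}\mu)$ with probability $\ge\alpha(1-o(1))$; discretising, since $1/\mu>(\log n)^{d+2}$ this interval contains at least $(\log n)$ integer times, each of which is then good.

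The main steps, in order, are as follows. First I would partition the time interval $[0,(\log n)^{8d+26+\gamma}(n^2+1/\mu)]$ into consecutive blocks, on each of which we run the stopping-time construction of Proposition~\ref{pro:hittinggiant}: on a block of length $(\log n)^{3d+8}/\mu$ the stopping time $\sigma$ lands inside the block with probability $1-o(1)$, and at that time $X_\sigma\in\GG_\sigma$ with probability $\ge\alpha$. The number of such blocks is of order $(\log n)^{8d+26+\gamma}(n^2+1/\mu)\big/\big((\log n)^{3d+8}/\mu\big)=(\log n)^{5d+18+\gamma}(\mu n^2+1)\ge(\log n)^{5d+18+\gamma}$ (using $\mu n^2\ge 0$; in the relevant regime $1/\mu>(\log n)^{d+2}$ one gets $\mu n^2\ge n^2/( \text{something})$, but the crude bound already suffices). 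Second, for each block $j$ let $B_j$ be the indicator that the corresponding $\sigma_j$ is in the block and $X$ stays in the giant for the subsequent window of $(\log n)$ integer times (Lemma~\ref{lem:stay}); then $\prcond{B_j=1}{\mathcal G_{j-1}}{}\ge\alpha(1-o(1))$ where $\mathcal G_{j-1}$ is the information up to the start of block $j$. Conditionally on each $B_j=1$ we collect at least $(\log n)$ distinct good integer times from the coupling: indeed $X_t\in\GG_t$ forces $|S_t\cap\GG_t|/|S_t|>0$, but we need the stronger lower bound $\ge(\log n)^{-(4d+12)}$, so here I would instead argue contrapositively — sum $\1(X_t\in\GG_t)$ over all integer $t$ in the range and compare its conditional expectation given $(S_i)$, which equals $\sum_t |S_t\cap\GG_t|/|S_t|$, to a lower bound coming from the $B_j$'s. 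Third, apply a concentration bound for the sum $\sum_j B_j$ of conditionally-likely events (a standard martingale/Azuma argument, or the ``at least $\alpha/2$ success fraction with exponentially small failure'' estimate) to get that with probability $\ge 1-e^{-c(\log n)^{5d+18+\gamma}}\ge 1-n^{-\alpha}$ at least a constant fraction of blocks have $B_j=1$, hence $\sum_t\1(X_t\in\GG_t)\ge (\text{const})(\log n)^{5d+19+\gamma}$. Fourth and last, translate this lower bound on $\sum_t\1(X_t\in\GG_t)$ into a lower bound on $|G(8d+26+\gamma)|$: if there were fewer than $(\log n)^\gamma(n^2+1/\mu)$ good times, then by definition of good, $\sum_t|S_t\cap\GG_t|/|S_t|$ is at most (number of good times)$\cdot 1$ plus (number of bad times)$\cdot(\log n)^{-(4d+12)}$, which is $\le (\log n)^\gamma(n^2+1/\mu)+(\log n)^{8d+26+\gamma}(n^2+1/\mu)(\log n)^{-(4d+12)}$; since $\E{\sum_t\1(X_t\in\GG_t)\mid (S_i)}=\sum_t|S_t\cap\GG_t|/|S_t|$, taking expectations and combining with the concentration lower bound on $\sum_t\1(X_t\in\GG_t)$ yields a contradiction for $n$ large, because the dominant term $(\log n)^{4d+14+\gamma}(n^2+1/\mu)$ on the right is much smaller than $(\log n)^{5d+19+\gamma}$ times... — here one must be careful with the exponents, and I would choose the exponent $8d+26+\gamma$ precisely so that the ``bad times'' contribution $(\log n)^{8d+26+\gamma-(4d+12)}=(\log n)^{4d+14+\gamma}$ is dominated.

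The main obstacle I anticipate is the bookkeeping of exponents and, relatedly, making the concentration step genuinely quenched-robust: the events $B_j$ are not independent (the environment and walk carry over between blocks), so I would need the strong Markov property at the block boundaries to get the uniform conditional lower bound $\prcond{B_j=1}{\mathcal G_{j-1}}{}\ge\alpha/2$, and then invoke a supermartingale/Azuma-type inequality (e.g.\ comparing $\sum_j B_j$ to a sum of i.i.d.\ Bernoulli$(\alpha/2)$ via stochastic domination) to get the $1-n^{-\alpha}$ failure probability. A secondary subtlety is that the definition of ``good'' is about $|S_t\cap\GG_t|/|S_t|$ being at least $(\log n)^{-(4d+12)}$, whereas the coupling only directly gives $X_t\in\GG_t$ with probability $|S_t\cap\GG_t|/|S_t|$; the contrapositive/averaging argument in Step~four is what bridges this gap, and it forces the particular choice of the exponent $8d+26+\gamma$ so that even with the factor-$(\log n)^{4d+12}$ loss there is still a polylogarithmic surplus of good times. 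Everything else — the block construction, the stay-in-giant window, the Poisson/exponential estimates — is quoted directly from Proposition~\ref{pro:hittinggiant}, Lemma~\ref{lem:stay}, and Corollary~\ref{cor:giantandsmall}.
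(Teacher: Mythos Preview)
Your proposal has the right ingredients --- the Diaconis--Fill coupling, a block decomposition, the input from Proposition~\ref{pro:hittinggiant} and Lemma~\ref{lem:stay}, and the averaging observation that on $\{|G|<\text{threshold}\}$ the quantity $R:=\sum_t|S_t\cap\GG_t|/|S_t|$ is small --- but Step~4 as written has a genuine gap. You obtain (with probability $\ge 1-n^{-\alpha}$) a large lower bound $M_2$ on $\sum_t\1(X_t\in\GG_t)$, and separately note that on the bad event $A=\{|G|<(\log n)^\gamma(n^2+1/\mu)\}$ the conditional expectation $R=\escond{\sum_t\1(X_t\in\GG_t)}{(S_i,\GG_i)_i}{}$ is at most $M_1\asymp(\log n)^{4d+14+\gamma}(n^2+1/\mu)$. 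But these two facts do \emph{not} force $\Pr(A)\le n^{-\alpha}$: since $A$ is $(S,\GG)$-measurable while your concentration is on a quantity depending on $X$ as well, all you can extract is $\E{R}\ge M_2(1-o(1))$, and combining $R\le M_1$ on $A$ with $R\le T:=(\log n)^{8d+26+\gamma}(n^2+1/\mu)$ globally only yields $\Pr(A^c)\ge (M_2-M_1)/(T-M_1)\asymp(\log n)^{-(4d+9)}$, a polylogarithmic lower bound --- not $1-n^{-\alpha}$. (Your exponent arithmetic is also off: the lower bound ``$(\log n)^{5d+19+\gamma}$'' on $\sum_t\1(X_t\in\GG_t)$ drops the factor $(n^2+1/\mu)$ that the right-hand side carries; the correct bound is of order $(\log n)^{4d+17+\gamma}(n^2+1/\mu)$, but this is secondary.)

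The paper repairs exactly this gap by running the averaging argument \emph{block by block} rather than globally. It partitions time into $(\log n)^{4d+14}$ blocks $J_i$ of length $(\log n)^{4d+12+\gamma}(n^2+1/\mu)$ and uses your Proposition~\ref{pro:hittinggiant}/Lemma~\ref{lem:stay} input to show $\escond{\sum_{t\in J_i}\1(X_t\in\GG_t)}{\F_{t_i}}{}\ge(\log n)^{\gamma+2}(n^2+1/\mu)$ almost surely. Your averaging argument, applied inside each block and \emph{conditionally on $\F_{t_i}$}, then gives $\prcond{\#\{\text{good }t\in J_i\}\ge(\log n)^\gamma(n^2+1/\mu)}{\F_{t_i}}{}\ge(\log n)^{-(4d+12)}$. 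Since this conditional bound is uniform, iterating over the $(\log n)^{4d+14}$ blocks boosts the failure probability to at most $\bigl(1-(\log n)^{-(4d+12)}\bigr)^{(\log n)^{4d+14}}\le n^{-\alpha}$. In short: move the iteration step from the $X$-measurable events $B_j$ to the $(S,\GG)$-measurable events ``block $i$ contains many good times'', and use your Step~4 within a single block to produce the polylog conditional success probability that feeds the iteration.
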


\begin{proof}[\bf Proof]
Fix $\gamma\in \N$ and $\alpha>0$.
To simplify notation we write $G=G(8d+26+\gamma)$. By definition we have 
\[
|G|=\sum_{t=0}^{(\log n)^{8d+26+\gamma}\cdot \left(n^2 +\frac{1}{\mu}\right)} \1\left( \frac{|S_t\cap \GG_t|}{|S_t|} \geq \frac{1}{(\log n)^{4d+12}}   \right). 
\]
For every $i\geq 0$ we define 
\[
J_i=\left[i\cdot(\log n)^{4d+\gamma+12}\cdot\left(n^2+\frac{1}{\mu}\right),(i+1)\cdot(\log n)^{4d+\gamma+12}\cdot\left(n^2+\frac{1}{\mu}\right) \right)\cap \N.
\]
We write $t_i$ for the left endpoint of the interval above. For integer $t$ we let $\F_t$ be the $\sigma$-algebra generated by the evolving set and the environment at integer times up to time $t$.

First of all we explain that for all $x,\eta_0$ and for all $i\geq 0$ we have almost surely
\begin{align}\label{eq:timeingiant}
	\escond{\sum_{t\in J_i}\1(X_t\in \GG_t)}{\F_{t_i}}{x,\eta_0} \geq (\log n)^{\gamma+2}\cdot \left(n^2 + \frac{1}{\mu}\right).
\end{align}
Indeed, in every interval of length $2(\log n)^{3d+8}/\mu$ we have from Proposition~\ref{pro:hittinggiant} and Lemma~\ref{lem:stay} that with constant probability there exists an interval of length $1/((\log n)^{d+1}\mu)$ such that for all $t$ in this interval $X_t\in \GG_t$. Note that since $1/\mu > (\log n)^{d+2}$, this interval has length larger than $1$. This establishes~\eqref{eq:timeingiant}.

Using the coupling of the Doob transform of the evolving set and the random walk given in Theorem~\ref{thm:coupling} we get that 
\begin{align*}
	\prcond{X_t\in \GG_t}{S_t, \GG_t}{} = \frac{|S_t\cap \GG_t|}{|S_t|},
\end{align*}
and hence
\begin{align*}
	|G|=\sum_{i=0}^{(\log n)^{4d+14}-1} T_i := \sum_{i=0}^{(\log n)^{4d+14}-1}\sum_{t\in J_i} \1\left( \prcond{X_t\in \GG_t}{S_t, \GG_t}{x,\eta_0} \geq \frac{1}{(\log n)^{4d+12}} \right).
\end{align*}
For any $x,\eta_0$ we set 
$$A_i(x,\eta_0) := \left\{ t\in J_i: \, \prcond{X_t\in \GG_t}{S_t, \GG_t}{x,\eta_0} \geq \frac{1}{(\log n)^{4d+12}}     \right\}. $$
We now claim that for any $x,\eta_0$ we have almost surely 
\begin{align}\label{eq:claimleb}
	\prcond{|A_i(x,\eta_0)| \geq (\log n)^\gamma\cdot\left(n^2+\frac{1}{\mu}\right)}{\F_{t_i}}{x,\eta_0} \geq \frac{1}{(\log n)^{4d+12}}.
\end{align}
Indeed, if not, then there exists a set $\Omega_0\in \F_{t_i}$ with $\pr{\Omega_0}>0$ such that on $\Omega_0$
\[
\prcond{|A_i(x,\eta_0)| \geq (\log n)^\gamma\cdot\left(n^2+\frac{1}{\mu}\right)}{\F_{t_i}}{x,\eta_0} <\frac{1}{(\log n)^{4d+12}}.
\]
We now define
\begin{align*}
	Y=\sum_{t\in J_i} \prcond{X_t\in \GG_t}{S_t, \GG_t}{x,\eta_0}
\end{align*}
and writing $A_i=A_i(x,\eta_0)$ to simplify notation, we would get on the event $\Omega_0$ that 
\begin{align*}
	\escond{Y}{\F_{t_i}}{x,\eta_0} &= \escond{\sum_{t\in A_i} \prcond{X_t\in \GG_t}{S_t,\GG_t}{x,\eta_0} + \sum_{t\in A_i^c} \prcond{X_t\in \GG_t}{S_t,\GG_t}{x,\eta_0}}{\F_{t_i}}{x,\eta_0} \\&
	\leq \escond{|A_i|}{\F_{t_i}}{x,\eta_0} + \frac{(\log n)^{4d+\gamma+12}}{(\log n)^{4d+12}} \cdot \left(n^2+\frac{1}{\mu}\right)\\ & \leq \frac{(\log n)^{4d+\gamma+12}}{(\log n)^{4d+12}} \cdot \left(n^2+\frac{1}{\mu}\right) + (\log n)^\gamma\cdot\left( n^2+\frac{1}{\mu}\right) + (\log n)^\gamma\cdot\left( n^2+\frac{1}{\mu}\right)\\&=3(\log n)^{\gamma}\cdot\left(n^2+ \frac{1}{\mu}\right).
\end{align*}
But this gives a contradiction for $n\geq e^{\sqrt{3}}$, since we have almost surely
\begin{align*}
	\escond{Y}{\F_{t_i}}{x,\eta_0} &= \escond{\sum_{t\in J_i} \prcond{X_t\in \GG_t}{S_t,\GG_t}{x,\eta_0}}{\F_{t_i}}{x,\eta_0} =\escond{\sum_{t\in J_i} \prcond{X_t\in \GG_t}{S_t,\GG_t, \F_{t_i}}{x,\eta_0}}{\F_{t_i}}{x,\eta_0}\\ &=
\escond{\sum_{t\in J_i}\1(X_t\in \GG_t)}{\F_{t_i}}{x,\eta_0}
	\geq (\log n)^{\gamma+2}\cdot\left(n^2+\frac{1}{\mu}\right),
\end{align*}
where the second equality follows from the Diaconis Fill coupling, the third one from the tower property for conditional expectation and the inequality follows from~\eqref{eq:timeingiant}.

Therefore, since~\eqref{eq:claimleb} holds for all starting points and configurations $x,\eta_0$, we finally conclude that for all $n\geq e^{\sqrt{3}}$,  all $x,\eta_0$ and for all $i$ almost surely
\begin{align*}
	\prcond{T_i\geq (\log n)^\gamma\cdot\left(n^2+\frac{1}{\mu}\right)}{\F_{t_i}}{x,\eta_0} \geq \frac{1}{(\log n)^{4d+12}}.
\end{align*}
Using the uniformity of this lower bound over all starting points and configurations yields for all $n$ sufficiently large and all~$x,\eta_0$ \begin{align*}
	\prstart{|G|\geq (\log n)^{\gamma}\cdot\left(n^2+\frac{1}{\mu}\right)}{x,\eta_0} &\geq \prstart{\exists \, i: \, T_i\geq (\log n)^\gamma\cdot\left(n^2+\frac{1}{\mu}\right)}{x,\eta_0} \\
	&= 1 - \prstart{\forall \, i: \, T_i < (\log n)^\gamma\cdot\left(n^2+\frac{1}{\mu}\right)}{x,\eta_0} \\
	& \geq 1 - \left(1 - \frac{1}{(\log n)^{4d+12}} \right)^{(\log n)^{4d+14}} \geq 1 - \frac{1}{n^\alpha}. 
\end{align*}	
This now finishes the proof.
\end{proof}

Next we show that there are enough excellent times.

\begin{lemma}\label{lem:extra}
For all $\gamma\in \N$ and $\alpha>0$, there exists $n_0$ so that for all $n\geq n_0$ and all~$x,\eta_0$
\[
\prstart{|G_e(8d+26+\gamma)|\geq (\log n)^{\gamma-1} \cdot n^2}{x,\eta_0} \geq  1 - \frac{1}{n^\alpha}.
\]
\end{lemma}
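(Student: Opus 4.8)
The plan is to deduce Lemma~\ref{lem:extra} from Lemma~\ref{lem:goodtimesc} by showing that most good times are in fact excellent. Applying Lemma~\ref{lem:goodtimesc} with the same $\gamma$ and $\alpha$, we may work on the event $\{|G(8d+26+\gamma)|\geq (\log n)^{\gamma}(n^2+1/\mu)\}$, which has probability at least $1-n^{-\alpha}$. So it suffices to show that, outside an event of probability at most $n^{-\alpha}$ (up to adjusting constants), at most a $1/(\log n)$ fraction — more precisely at most $(\log n)^{\gamma-1}(n^2+1/\mu) - (\text{what we need to subtract})$ — sorry, let me restate: it suffices to show that the number of good-but-not-excellent times in $[0,(\log n)^{8d+26+\gamma}(n^2+1/\mu)]$ is at most, say, $\tfrac12(\log n)^{\gamma}(n^2+1/\mu)$ with high probability. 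Then on the intersection of the two events we get at least $\tfrac12(\log n)^{\gamma}(n^2+1/\mu) \ge (\log n)^{\gamma-1}n^2$ excellent times for $n$ large.

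To control the good-but-not-excellent times I would first reduce to a deterministic (in $\eta$) statement by conditioning on the whole environment $\eta$ and the evolving-set process — since excellence of time $t$ is determined by $S_t$ and $(\eta_s)_{s\in[t,t+1]}$. Fix an integer time $t$ and condition on $\F_t$ (hence on $S_t$ and $\eta_t$). A time $t$ fails to be excellent when $\int_t^{t+1}|\partial_{\eta_s}S_t|\,ds < \tfrac12|\partial_{\eta_t}S_t|$. Each edge $e\in\partial_{\eta_t}S_t$ is open at time $t$ and contributes $\int_t^{t+1}\eta_s(e)\,ds$; since $e$ refreshes at rate $\mu\le 1/2$ and $1/\mu > (\log n)^{d+2}$, the probability that $e$ refreshes in $[t,t+1]$ is $1-e^{-\mu}\le \mu$, which is tiny. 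Conditioned on the status at time $t$, the variable $\int_t^{t+1}\eta_s(e)\,ds$ is $1$ unless there is a refresh in the unit interval; so $\E{1 - \int_t^{t+1}\eta_s(e)\,ds \mid \eta_t(e)=1} \le \mu \cdot 1 \le \mu$. Summing over $e\in\partial_{\eta_t}S_t$, Markov's inequality gives
\[
\prcond{\int_t^{t+1}|\partial_{\eta_s}S_t|\,ds < \tfrac12|\partial_{\eta_t}S_t|}{\F_t}{} = \prcond{\sum_{e\in\partial_{\eta_t}S_t}\Big(1-\int_t^{t+1}\eta_s(e)\,ds\Big) > \tfrac12|\partial_{\eta_t}S_t|}{\F_t}{} \le 2\mu.
\]
Thus, writing $N$ for the number of good times and $N_{\mathrm{bad}}$ for the number of good-but-not-excellent times among $[0,(\log n)^{8d+26+\gamma}(n^2+1/\mu)]$, we have $\E{N_{\mathrm{bad}}} \le 2\mu \cdot (\log n)^{8d+26+\gamma}(n^2+1/\mu)$. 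Since $1/\mu > (\log n)^{d+2}$, i.e. $\mu < (\log n)^{-(d+2)}$, and $n^2+1/\mu \le 2(n^2+1/\mu)$ trivially, we get $\E{N_{\mathrm{bad}}} \le 2(\log n)^{8d+24+\gamma}(n^2+1/\mu)$ — hmm, this is bigger than what we want, so Markov alone on the raw count is not enough; I need a union-bounded tail estimate.

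The fix is to sum the conditional estimate more carefully along the filtration: let $N_{\mathrm{bad}}(m)$ be the count over $[0,m]$; then $N_{\mathrm{bad}}(m) - \sum_{t<m}\1(t\text{ good})\cdot 2\mu$ is a supermartingale (with bounded increments), so by a standard Azuma/Freedman-type concentration inequality for supermartingales with predictable quadratic variation at most $\sum_t 2\mu \le 2\mu\cdot(\log n)^{8d+26+\gamma}(n^2+1/\mu)$, we obtain
\[
\prstart{N_{\mathrm{bad}} \ge 2\mu N + \lambda}{x,\eta_0} \le \exp\!\Big(-\frac{\lambda^2}{2\big(2\mu(\log n)^{8d+26+\gamma}(n^2+1/\mu) + \lambda/3\big)}\Big),
\]
and since $N \le (\log n)^{8d+26+\gamma}(n^2+1/\mu)$ always and $2\mu (\log n)^{8d+26+\gamma} < 2(\log n)^{8d+24+\gamma}$, choosing $\lambda = \tfrac14(\log n)^{\gamma}(n^2+1/\mu)$ makes the exponent $\lesssim -(\log n)^{\gamma}(n^2+1/\mu)/(\log n)^{8d+24+\gamma} = -(n^2+1/\mu)/(\log n)^{8d+24}$, which is superpolynomially small in $n$ (here we use $n^2 \to\infty$), hence $\le n^{-\alpha}$ for $n$ large. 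On the complement of this event together with the event from Lemma~\ref{lem:goodtimesc},
\[
|G_e(8d+26+\gamma)| = N - N_{\mathrm{bad}} \ge (\log n)^{\gamma}(n^2+1/\mu) - 2\mu N - \tfrac14(\log n)^{\gamma}(n^2+1/\mu) \ge \tfrac12(\log n)^{\gamma}(n^2+1/\mu) \ge (\log n)^{\gamma-1}n^2
\]
for $n$ large, since $2\mu N < 2(\log n)^{-(d+2)}(\log n)^{8d+26+\gamma}(n^2+1/\mu)$ — wait, that bound on $2\mu N$ is again too weak because it uses the crude bound $N\le(\log n)^{8d+26+\gamma}(\cdots)$. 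Here the cleaner route is: among good-but-not-excellent times I should only ever compare against the \emph{good} times I actually have, which is $N$, and separately I should bound $2\mu N$; but $N$ could be as large as $(\log n)^{8d+26+\gamma}(n^2+1/\mu)$. So instead I truncate: restrict attention to the \emph{first} $M := 2(\log n)^{\gamma}(n^2+1/\mu)$ good times (which exist on the Lemma~\ref{lem:goodtimesc} event), run the supermartingale argument only over those, getting $2\mu M \le 2\mu\cdot 2(\log n)^{\gamma}(n^2+1/\mu) \le 4(\log n)^{\gamma-d-2}(n^2+1/\mu) \le (\log n)^{\gamma}(n^2+1/\mu)/4$ for $n$ large, and the tail bound likewise with $M$ in place of the crude bound, giving again superpolynomial decay. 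Then the first $M$ good times yield at least $M - 2\mu M - \tfrac14 M \ge \tfrac12 M = (\log n)^{\gamma}(n^2+1/\mu) \ge (\log n)^{\gamma-1}n^2$ excellent times, as required.

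The main obstacle is exactly this bookkeeping: making sure the concentration is applied to a count of at most polylog$\cdot(n^2+1/\mu)$ terms (by truncating to the first $M$ good times furnished by Lemma~\ref{lem:goodtimesc}) rather than to all $(\log n)^{8d+26+\gamma}(n^2+1/\mu)$ candidate times, since otherwise the $2\mu\cdot(\text{count})$ drift swamps the $(\log n)^{\gamma}(n^2+1/\mu)$ budget. One also needs to be slightly careful that "good" at time $t$ is $\F_t$-measurable (it is, being a function of $S_t$ and $\GG_t\subseteq\eta_t$) and that non-excellence conditional on $\F_t$ has probability $\le 2\mu$ uniformly, which is the elementary edge-refresh computation above; the rest is a routine supermartingale concentration plus a union of two high-probability events.
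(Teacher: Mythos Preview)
Your final argument (after the truncation fix) is correct and is essentially the paper's approach: bound the conditional probability, given $\F_t$, that a good time $t$ fails to be excellent, then apply concentration over the first batch of good times supplied by Lemma~\ref{lem:goodtimesc}. The paper carries this out more directly: if at least half the edges of $\partial_{\eta_t}S_t$ do not refresh during $[t,t+1]$ then $t$ is excellent, and since each edge survives with probability $e^{-\mu}\ge e^{-1/2}>1/2$ (using only $\mu\le 1/2$), one gets $\prcond{t\text{ excellent}}{\F_t}{}\ge 1/2$; hence the excellent count among the first $(\log n)^{\gamma}n^2$ good times dominates a $\mathrm{Bin}((\log n)^{\gamma}n^2,1/2)$ and a standard Chernoff bound finishes---your sharper $2\mu$ bound, the Freedman inequality, and the assumption $1/\mu>(\log n)^{d+2}$ are all unnecessary for this step. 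One small slip: Lemma~\ref{lem:goodtimesc} only guarantees $(\log n)^{\gamma}(n^2+1/\mu)$ good times, not twice that, so take $M=(\log n)^{\gamma}(n^2+1/\mu)$ (or just $(\log n)^{\gamma}n^2$) in your truncation.
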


\begin{proof}[\bf Proof]

 For almost every environment, there is an infinite number of good times that we denote by~$t_1,t_2,\ldots$. 
For every good time $t$ we define $I_t$ to be the indicator that $t$ is excellent.

Again to simplify notation we write $G=G(8d+26+\gamma)$ and $G_e= G_e(8d+26+\gamma)$.
Note that if $t$ is good and at least half of the edges of $\partial_{\eta_t}S_t$ do not refresh during $[t,t+1]$, then~$t$ is an excellent time (note that if $\partial_{\eta_t}S_t=\emptyset$, then $t$ is automatically excellent). Let~$E_1,\ldots, E_{|\partial_{\eta_t}S_t|}$ be the first times at which the edges on the boundary~$\partial_{\eta_t}S_t$ refresh. They are independent exponential random variables with parameter $\mu$.

Let $\F_s$ be the $\sigma$-algebra generated by the process (walk, environment and evolving set) up to time~$s$. Then for all $t$, on the event $\{t\in G\}$ we have
\begin{align*}
	\prcond{I_t=1}{\F_t}{x,\eta_0} \geq \prstart{\sum_{i=1}^{|\partial_{\eta_t}S_t|} \1(E_i>1) \geq \frac{|\partial_{\eta_t}S_t|}{2}}{x,\eta_0}.
\end{align*}
Since $\prstart{E_i>1}{x,\eta_0} = e^{-\mu}$ and $\mu\leq 1/2$, there exists $n_0$ so that for all $n\geq n_0$ we have for all $x, \eta_0$
\begin{align*}
	 \prstart{\sum_{i=1}^{|\partial_{\eta_t}S_t|} \1(E_i>1) \geq \frac{|\partial_{\eta_t}S_t|}{2}}{x,\eta_0}\geq \frac{1}{2}.
\end{align*}
Let $A=\{ |G|\geq  (\log n)^{\gamma} \cdot n^2\}$. By Lemma~\ref{lem:goodtimesc} we get $\prstart{A^c}{x,\eta_0} \leq 1/n^\alpha$ for all $n\geq n_0$ and all $x,\eta_0$. Let $G=\{t_1,\ldots, t_{|G|}\}$.  On the event $A$ we have
 \[
 |G_e| \geq \sum_{i=1}^{(\log n)^{\gamma}\cdot n^2} I_{t_i}.
 \]
We thus get for all $x,\eta_0$ and all $n\geq n_0$
\begin{align*}
	\prstart{|G_e|< (\log n)^{\gamma-1} \cdot n^2}{x,\eta_0}& \leq \prstart{|G_e|<  (\log n)^{\gamma-1} \cdot n^2, A}{x,\eta_0} + \frac{1}{n^\alpha} \\
	& \leq \prstart{\sum_{i=1}^{(\log n)^{\gamma}\cdot n^2} I_{t_i} <(\log n)^{\gamma-1} \cdot n^2}{x,\eta_0} + \frac{1}{n^\alpha}.
\end{align*}
Since conditional on the past, the variables $(I_{t_i})_i$ dominate independent Bernoulli random variables with parameter $1/2$, using a standard concentration inequality we get that this last probability decays exponentially in $n$ and this concludes the proof.
\end{proof}

Let $\tau_1, \tau_2,\ldots$ be the sequence of excellent times. Then the previous lemma immediately gives 
\begin{corollary}\label{cor:excellent}
Let $\gamma \in \N$, $\alpha>0$ and $N= (\log n)^{\gamma} \cdot n^2$. Then there exists $n_0$ so that for all~$n\geq n_0$ and all $x,\eta_0$ we have  
\[
\prstart{\tau_N\leq (\log n)^{8d+27+\gamma} \cdot \left( n^2+\frac{1}{\mu}\right)}{x,\eta_0} \geq  1-\frac{1}{n^\alpha}.
\]
\end{corollary}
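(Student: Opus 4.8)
The plan is to recognise that Corollary~\ref{cor:excellent} is merely a reformulation of Lemma~\ref{lem:extra} in terms of the excellent hitting times $\tau_N$, so the only content is bookkeeping with the exponents.

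First I would unwind the definitions. By Definition~\ref{def:goodexcellent} excellent times are integer times, and for any $a$ the set $G_e(a)$ consists precisely of the excellent times lying in $[0,(\log n)^a(n^2+\frac1\mu)]$. Since $\tau_1<\tau_2<\cdots$ enumerates the excellent times in increasing order, for any $S\geq 0$ the event $\{\tau_N\leq S\}$ is exactly the event that at least $N$ excellent times occur in $[0,S]$. Taking $S=(\log n)^{8d+27+\gamma}(n^2+\frac1\mu)$ and $N=(\log n)^\gamma n^2$, this gives the identification
\[
\left\{\tau_N\leq (\log n)^{8d+27+\gamma}\left(n^2+\frac1\mu\right)\right\}=\left\{\,|G_e(8d+27+\gamma)|\geq N\,\right\}.
\]

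Next I would apply Lemma~\ref{lem:extra} with $\gamma$ replaced by $\gamma+1$ (which is legitimate since $\gamma+1\in\N$) and with the same $\alpha$: for all $n$ large enough and all $x,\eta_0$,
\[
\prstart{|G_e(8d+26+(\gamma+1))|\geq (\log n)^{(\gamma+1)-1}n^2}{x,\eta_0}\geq 1-\frac1{n^\alpha}.
\]
Since $8d+26+(\gamma+1)=8d+27+\gamma$ and $(\log n)^{(\gamma+1)-1}n^2=(\log n)^\gamma n^2=N$, the left-hand event is exactly $\{|G_e(8d+27+\gamma)|\geq N\}$, so by the identification above this is precisely the bound claimed in Corollary~\ref{cor:excellent}.

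There is no substantive obstacle: the shift $\gamma\mapsto\gamma+1$ is engineered so that it simultaneously supplies the one extra power of $\log n$ in the time horizon and upgrades the $(\log n)^{\gamma-1}$ lower bound on $|G_e|$ to the required $(\log n)^\gamma$. The only point requiring a word of care is that $N=(\log n)^\gamma n^2$ need not be an integer and should be read as $\lceil N\rceil$ throughout, which is harmless since $|G_e(\cdot)|$ is integer-valued.
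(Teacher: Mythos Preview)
Your proof is correct and matches the paper's approach exactly: the paper simply states that ``the previous lemma immediately gives'' the corollary, and you have supplied precisely the bookkeeping (applying Lemma~\ref{lem:extra} with $\gamma+1$ in place of $\gamma$ and identifying $\{\tau_N\le S\}=\{|G_e|\ge N\}$) that makes this immediate.
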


\section{Mixing times}\label{sec:mixing}

In this section we prove Theorems~\ref{thm:qmixlarge},\ref{thm:qmixcesaro} and Corollary~\ref{cor:hittingtimes}. From now on $d\geq 2$, $p>p_c(\Z^d)$ and $\frac{1}{\mu} >(\log n)^{d+2}$.

\subsection{Good environments and growth of the evolving set}

The first step is to obtain the growth of the Doob transform of the evolving set at excellent times. We will use the following theorem by Gabor Pete~\cite{Pete} which shows that the isoperimetric profile of the giant cluster basically coincides with the profile of the original lattice.

For a subset $S\subseteq \Z_n^d$ we write $S\subseteq \GG$ to denote $S\subseteq V(\GG)$ and we also write $|\GG| = |V(\GG)|$.

\begin{theorem}\label{thm:pete}
{\rm{\cite[Corollary~1.4]{Pete}}}
	For all $d\geq 2$,  $p>p_c(\Z^d)$, $\delta \in (0,1)$ and $c'>0$ there exist $c>0$ and $\alpha>0$ so that for all $n$ sufficiently large
	\[
	\pr{\forall \, S\subseteq \GG: \, S \text{ connected and } c (\log n)^{\frac{d}{d-1}} \leq |S|\leq (1-\delta)|\GG|, \text{ we have } |\partial_{\GG} S|\geq \alpha |S|^{1-\frac{1}{d}}} \geq 1-\frac{1}{n^{c'}}.
	\] 
\end{theorem}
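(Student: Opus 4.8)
Theorem~\ref{thm:pete} is quoted from~\cite{Pete}; the strategy behind it (shared with the earlier percolation-isoperimetry results of Benjamini--Mossel, Mathieu--Remy and Rau) is a static renormalisation argument, and I would organise it as follows. First I would fix a large constant block size $L$ and partition $\Z_n^d$ into boxes of side $L$. I would call a box $B$ \emph{white} if the configuration in the dilated box $3B$ is ``textbook supercritical'': there is a unique largest cluster in $3B$, it crosses $3B$ in every coordinate direction, it contains at least a $(1-\eps)$-fraction of the vertices of $B$, it is ``locally isoperimetric'' inside $B$, and its crossing piece is connected inside $3B\cup 3B'$ to the corresponding crossing piece of every neighbouring box $B'$. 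By the Grimmett--Marstrand theorem and Pisztora's renormalisation, whiteness is an event depending only on the edges of $3B$ whose probability $q(L)$ tends to $1$ as $L\to\infty$, so the white boxes stochastically dominate a finite-range-dependent site percolation of density $q(L)$ on $\Z_{n/L}^d$.

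Next I would record a ``good environment'' event. Choosing $L$ large enough that $q(L)$ is close to $1$, a standard Peierls-type bound for dependent percolation gives that, with probability at least $1-n^{-c'}$, every connected cluster of black boxes contains at most $\kappa\log n$ boxes (hence has diameter $O(L\log n)$); on the same event $\GG$ is exactly the union of the white crossing pieces (all glued together) together with small attached pieces, $|\GG|=(\theta(p)+o(1))n^d$, and every other cluster has diameter $O(\log n)$ (cf.\ Proposition~\ref{cl:standardperc}). From here everything is deterministic on this event.

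The heart of the matter is a local isoperimetric transfer lemma: if $W$ is a connected union of white boxes, then $\GG\cap W$ is a bounded-distortion copy, at scale $L$, of a sub-domain of $\Z^d$, so it inherits the isoperimetric inequality $|\partial_{\GG}S|\gtrsim_{L,p}|S|^{1-1/d}$ of $\Z^d$ itself. I would prove this exactly as one does on $\Z^d$: by projecting $S$ onto the $d$ coordinate hyperplanes and applying the Loomis--Whitney inequality, using that inside a white box the giant is dense and that adjacent white boxes are fully interconnected, so that a path through $W$ in the scaled lattice can be realised inside $\GG$.

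Finally I would assemble: given connected $S\subseteq\GG$ with $c(\log n)^{d/(d-1)}\le|S|\le(1-\delta)|\GG|$, coarse-grain to $\hat S=\{B:\,|S\cap B|\ge\beta L^d\}$ and observe that boxes which are black, or white but adjacent to a black component, or ``light'' (i.e.\ $\emptyset\ne S\cap B$ but $B\notin\hat S$) can each be charged $O((\log n)^d)$ vertices, and there are comparatively few of these because black components have only $O(\log n)$ boxes; since $|S|^{1-1/d}\ge\log n$ at the threshold and $c$ can be taken large, these corrections are of lower order than the claimed bound. The ``core'' of $S$ then lies in a union of white boxes, has size comparable to $|S|$, and (using the slack provided by $|S|\le(1-\delta)|\GG|$) the transfer lemma applies, yielding $|\partial_{\GG}S|\gtrsim|S|^{1-1/d}$. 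I expect the main obstacle to be the local transfer lemma itself --- pinning down a notion of ``white box'' that is simultaneously strong enough to make the Loomis--Whitney argument valid \emph{uniformly over all} $S$ and weak enough to have probability tending to $1$ --- together with tuning the bookkeeping in the last step so that the isoperimetric ``defect'' that black components can create, which has order $\log n$ by the $O(\log n)$ bound on their size, matches the threshold $(\log n)^{d/(d-1)}$ exactly.
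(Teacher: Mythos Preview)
The paper does not prove Theorem~\ref{thm:pete} at all: it is quoted verbatim from Pete~\cite{Pete} (Corollary~1.4), and the only commentary in the paper is the remark that, although Pete states the bound merely as ``tends to $1$'', a close reading of his argument yields the polynomial error $n^{-c'}$. So there is no ``paper's own proof'' to compare your proposal against.

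That said, your outline is a faithful sketch of the static renormalisation strategy that underlies Pete's result (and the closely related arguments of Benjamini--Mossel and Mathieu--Remy~\cite{mathieu}): declare boxes good via local crossing/uniqueness events of high probability, use Liggett--Schonmann--Stacey type domination to control black components by $O(\log n)$ via Peierls, and then transfer the lattice isoperimetry through the coarse-grained picture. Your identification of the two delicate points --- getting the ``white box'' definition strong enough for a uniform local isoperimetric transfer, and matching the $(\log n)^{d/(d-1)}$ threshold to the defect that logarithmic black components can create --- is exactly right, and is where the work in~\cite{Pete} lies. For the purposes of the present paper none of this needs to be reproduced; the theorem is used as a black box, with only Corollary~\ref{cor:pete} (the easy extension to disconnected $S$ at the price of an extra $\log n$) proved here.
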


\begin{remark}\rm{
Pete~\cite{Pete} only states that the probability appearing above tends to $1$ as $n\to \infty$, but a close inspection of the proof actually gives the polynomial decay. Mathieu and Remy~\cite{mathieu}
have obtained similar results. 
}
\end{remark}

\begin{corollary}\label{cor:pete}
For all $d\geq 2$, $p>p_c(\Z^d)$, $c'>0$ and $\delta \in (0,1)$ there exist $c>0$ and $\alpha>0$ so that for all $n$ sufficiently large
	\[
	\pr{\forall \, S\subseteq \GG:  c (\log n)^{\frac{d}{d-1}} \leq |S|\leq (1-\delta)|\GG|, \text{ we have } |\partial_{\GG} S|\geq \frac{\alpha |S|^{1-\frac{1}{d}}}{\log n}} \geq 1-\frac{1}{n^{c'}}.
	\]
\end{corollary}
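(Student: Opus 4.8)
The plan is to derive Corollary~\ref{cor:pete} from Theorem~\ref{thm:pete} essentially by a bookkeeping argument that removes the connectivity hypothesis on $S$. The point is that Theorem~\ref{thm:pete} only controls \emph{connected} subsets of $\GG$, whereas the corollary asserts an isoperimetric inequality (with an extra $\log n$ loss) for \emph{all} subsets $S\subseteq \GG$ in the same size range. So the main work is to decompose an arbitrary $S\subseteq \GG$ into its connected components (as a subgraph of $\GG$) and to control how both $|\partial_\GG S|$ and $|S|^{1-1/d}$ behave under this decomposition.

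Here is how I would organise the steps. First, fix the high-probability event $\mathcal{E}$ of Theorem~\ref{thm:pete} for the parameters $\delta$ and $c'$; on $\mathcal{E}$ every connected $S'\subseteq\GG$ with $c(\log n)^{d/(d-1)}\le |S'|\le (1-\delta)|\GG|$ satisfies $|\partial_\GG S'|\ge \alpha |S'|^{1-1/d}$. Now take an arbitrary $S\subseteq\GG$ with $c(\log n)^{d/(d-1)}\le |S|\le(1-\delta)|\GG|$ and write $S=\bigsqcup_j S_j$ for its connected components in $\GG$. Since the edge boundaries of distinct components are disjoint, $|\partial_\GG S|\ge \sum_j |\partial_\GG S_j|$, minus at most the edges joining two components of $S$, but those are genuinely not in $\partial_\GG S$ — actually $|\partial_\GG S| = \sum_j|\partial_\GG S_j| - 2(\text{number of }\GG\text{-edges between distinct components of }S)$, and since two components of $S$ are by definition not joined by a $\GG$-edge, we simply have $|\partial_\GG S|=\sum_j|\partial_\GG S_j|$. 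So it suffices to lower bound $\sum_j|\partial_\GG S_j|$.

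The issue is that individual components $S_j$ may be too small (below the threshold $c(\log n)^{d/(d-1)}$) or, conversely, too large (above $(1-\delta)|\GG|$) for Theorem~\ref{thm:pete} to apply directly. For the large case: at most one component can have $|S_j|>(1-\delta)|\GG|$/2, and anyway if some $|S_j|>(1-\delta)|\GG|$ we can instead work with $\GG\setminus S_j$ or simply split $S_j$ by fiat into two connected-ish pieces — but cleanly, I would instead invoke the isoperimetric inequality on a connected subset of $S_j$ of size exactly $\lfloor(1-\delta)|\GG|\rfloor$ (connected subsets of every intermediate size exist in a connected graph), which gives $|\partial_\GG S_j|\ge\alpha((1-\delta)|\GG|)^{1-1/d}\gtrsim |S_j|^{1-1/d}$ since $|S_j|\le|\GG|$. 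For the small components: let $\mathcal{B}$ be the set of components with $|S_j|<c(\log n)^{d/(d-1)}$ and $\mathcal{G}$ the rest. If the total mass of $\mathcal{G}$-components is at least $|S|/2$, then $\sum_{j\in\mathcal{G}}|\partial_\GG S_j|\ge\alpha\sum_{j\in\mathcal{G}}|S_j|^{1-1/d}\ge\alpha(\sum_{j\in\mathcal{G}}|S_j|)^{1-1/d}\ge\alpha(|S|/2)^{1-1/d}$ (using superadditivity of $x\mapsto x^{1-1/d}$ on sums, in the convenient direction $\sum x_j^{1-1/d}\ge(\sum x_j)^{1-1/d}$), which beats $\alpha|S|^{1-1/d}/\log n$ comfortably. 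If instead the small components carry at least $|S|/2$ of the mass: each small component $S_j$, being connected in $\GG$ with $|S_j|\ge 1$, has $|\partial_\GG S_j|\ge 1$ (a nonempty proper subset of the connected infinite-ish cluster $\GG$ has at least one boundary edge — here we use $|S_j| < |\GG|$, which holds since $|S|\le(1-\delta)|\GG|$), so $|\partial_\GG S|\ge\sum_{j\in\mathcal{B}}1 = |\mathcal{B}|\ge \frac{|S|/2}{c(\log n)^{d/(d-1)}}\ge \frac{|S|}{2c(\log n)^{d/(d-1)}}$, and since $|S|^{1/d}\le n\lesssim$ anything polynomial this is at least $|S|^{1-1/d}/((\text{const})(\log n)^{d/(d-1)})\cdot$ something — here I need $|S|\ge |S|^{1-1/d}$ so $|S|/(\log n)^{d/(d-1)}\ge |S|^{1-1/d}/(\log n)^{d/(d-1)}$, and absorbing the exponent: $(\log n)^{d/(d-1)}\le (\log n)^{2}$ for $d\ge2$, actually $d/(d-1)\le 2$, fine, so this is $\ge |S|^{1-1/d}/(2c(\log n)^2)$. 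Taking the final $\alpha$ to be a suitably shrunk constant and noting $(\log n)^2$ versus the single $\log n$ allowed — I'd simply restate the corollary as written with the $\log n$ in the denominator knowing $(\log n)^{d/(d-1)}/\log n = (\log n)^{1/(d-1)}$ is not bounded, so actually one should either keep the loss as $(\log n)^{d/(d-1)}$ or note $d/(d-1)\le 2$ and write it more crudely; the cleanest fix is to observe the corollary's statement with a \emph{single} $\log n$ already suffices because in the bad case $|\mathcal B|\ge |S|/(2c(\log n)^{d/(d-1)})$ and we only need to beat $\alpha |S|^{1-1/d}/\log n$, so we need $|S|^{1/d}\ge (\text{const})(\log n)^{d/(d-1)-1}=(\text{const})(\log n)^{1/(d-1)}$, which holds since $|S|\ge c(\log n)^{d/(d-1)}$ gives $|S|^{1/d}\ge c^{1/d}(\log n)^{1/(d-1)}$. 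Good — so the $\log n$ in the denominator is enough.)

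The main obstacle is really just the small-component case: making sure that when $S$ is a union of many tiny pieces, the count $|\mathcal B|$ of such pieces is large enough that the trivial bound $|\partial_\GG S_j|\ge 1$ per piece already yields the claimed inequality; this works precisely because the size threshold $c(\log n)^{d/(d-1)}$ and the allowed $\log n$ loss are matched to each other via $|S|^{1/d}\gtrsim (\log n)^{1/(d-1)}$. Everything else (disjointness of component boundaries, superadditivity $\sum x_j^{1-1/d}\ge(\sum x_j)^{1-1/d}$, existence of connected subsets of all intermediate sizes, and the union bound keeping us on the event $\mathcal E$) is routine. I would therefore structure the proof as: (1) condition on $\mathcal E$; (2) decompose $S$ into $\GG$-components and split into $\mathcal G$ (large enough for Theorem~\ref{thm:pete}, after truncating any over-sized component down to size $\lfloor(1-\delta)|\GG|\rfloor$) and $\mathcal B$ (too small); (3) in case $\sum_{\mathcal G}|S_j|\ge|S|/2$, apply Theorem~\ref{thm:pete} componentwise plus superadditivity; (4) in case $\sum_{\mathcal B}|S_j|\ge|S|/2$, use $|\partial_\GG S_j|\ge1$ and count; (5) collect constants and conclude with the stated probability $1-n^{-c'}$.
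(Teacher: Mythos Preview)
Your proposal is correct and follows the same core idea as the paper: decompose $S$ into its connected $\GG$-components, use $|\partial_\GG S|=\sum_j|\partial_\GG S_j|$, and combine via the superadditivity inequality $\sum_j |S_j|^{1-1/d}\ge |S|^{1-1/d}$.

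The paper's execution is cleaner in two places. First, your worry about ``over-sized'' components is unnecessary: since $S_j\subseteq S$ and $|S|\le(1-\delta)|\GG|$, every component automatically satisfies $|S_j|\le(1-\delta)|\GG|$, so no truncation argument is needed. Second, and more importantly, the paper avoids your case split on whether $\mathcal G$ or $\mathcal B$ carries half the mass. Instead it observes that the single inequality $|\partial_\GG S_j|\ge \alpha|S_j|^{1-1/d}/\log n$ holds \emph{uniformly} for every component: for large $S_j$ it is Theorem~\ref{thm:pete} (even without the $\log n$), and for small $S_j$ with $|S_j|<c(\log n)^{d/(d-1)}$ one has $|S_j|^{1-1/d}\le c^{1-1/d}\log n$ (the exponents match exactly: $\tfrac{d}{d-1}\cdot\tfrac{d-1}{d}=1$), so $|\partial_\GG S_j|\ge 1\ge \alpha|S_j|^{1-1/d}/\log n$ once $\alpha\le c^{-(1-1/d)}$. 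With this uniform per-component bound in hand, a single summation plus superadditivity finishes the proof in one line, with no case analysis on the mass distribution.
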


\begin{proof}[\bf Proof]

We only need to prove the statement for all $S$ that are disconnected, since the other case is covered by Theorem~\ref{thm:pete}.
Let $A$ be the event appearing in the probability of Theorem~\ref{thm:pete}.

Let $S$ be a disconnected set satisfying $S\subseteq \GG$ and $c (\log n)^{\frac{d}{d-1}} \leq |S|\leq (1-\delta)|\GG|$. Let $S=S_1\cup \ldots\cup S_k$ be the decomposition of $S$ into its connected components. Then we claim that on the event $A$ we have for all $i\leq k$
\begin{align*}
	|\partial_{\GG} S_i| \geq \alpha\frac{|S_i|^{1-\frac{1}{d}}}{\log n}.
\end{align*}
Indeed, there are two cases to consider: (i) $|S_i|\geq c(\log n)^{d/(d-1)}$, in which case the inequality follows from the definition of the event $A$;
 (ii) $|S_i|<c(\log n)^{d/(d-1)}$, in which case the inequality is trivially true by taking $\alpha$ small in Theorem~\ref{thm:pete}, since the boundary contains at least one vertex.
Therefore we deduce,
\begin{align*}
	|\partial_{\GG} S| =\sum_{i=1}^{k} |\partial_{\GG} S_i| \geq \alpha\sum_{i=1}^{k} \frac{|S_i|^{1-\frac{1}{d}}}{\log n} \geq \alpha\frac{\left(\sum_{i=1}^{k}|S_i|\right)^{1-\frac{1}{d}}}{\log n} = \alpha\frac{|S|^{1-\frac{1}{d}}}{\log n}
\end{align*}
and this completes the proof.
\end{proof}

Recall that for a fixed environment $\eta$ we write $S$ for the Doob transform of the evolving set process associated to $X$ and $\tau_1, \tau_2, \ldots$ are the excellent times as in Definition~\ref{def:goodexcellent} and we take $\tau_0=0$.

\begin{definition}\label{def:goodenv}\rm{
Let~$c_1,c_2$ be two positive constants and $\delta\in (0,1)$. Given $n\geq 1$, define 
\[
t(n)=(\log n)^{16d+47}\cdot (n^2+1/\mu)\quad \text{ and } \quad  N=(\log n)^{8d+20}\cdot n^2.
\]
We call~$\eta$ a~$\delta$-\emph{good} environment if the following conditions hold:

(1) for all $\frac{11d\log n}{\mu}\leq t\leq t(n)\log n$ the giant cluster $\GG_t$ has size $|\GG_t|\in ((1-\delta)\theta(p) n^d, (1+\delta)\theta(p) n^d)$,

(2) for all $\frac{11d\log n}{\mu}\leq t\leq t(n)\log n, \, \forall \, S\subseteq \GG_t$ with 
\[
 c_1(\log n)^{\frac{d}{d-1}}\leq |S| \leq (1-\delta)|\GG_t| \quad \text{ we have } \quad |\partial_{\eta_{t}}S|\geq \frac{c_2 |S|^{1-1/d}}{(\log n)},
 \]

(3) $\prstart{\tau_N\leq t(n)}{x,\eta} \geq 1-\frac{1}{n^{10d}}$ for all $x$,

(4) $\prstart{\tau_N<\infty}{x,\eta}=1$ for all $x$.
}	
\end{definition}

To be more precise we should have defined a $(\delta, c_1, c_2)$-good environment. But we drop the dependence on $c_1$ and $c_2$ to simplify the notation.

\begin{lemma}\label{lem:good}
For all $\delta\in (0,1)$ 
there exist $c_1, c_2, c_3$ positive constants and $n_0\in \N$ such that for all $n\geq n_0$ and all $\eta_0$ we have
 \[
 \prstart{\eta \text{ is $\delta$-good}\,}{\eta_0} \geq 1 - \frac{c_3}{n^{10d}}.
 \]
\end{lemma}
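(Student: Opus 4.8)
The plan is to establish conditions (1)--(4) of Definition~\ref{def:goodenv} separately and then take a union bound, choosing the constants $c_1, c_2$ to be the ones furnished by Corollary~\ref{cor:pete} (with $c'$ taken large enough, say $c' = 11d$) and also compatible with Corollary~\ref{cor:giantandsmall}. Throughout, note that $t(n)\log n$ is polynomial in $n$ divided by $\mu$, i.e.\ of the form $n^k/\mu$ for a fixed $k$ (since $1/\mu > (\log n)^{d+2}$ makes the $n^2$ term negligible and in any case $n^2 \le n^2/\mu \cdot 2\mu \le n^2/\mu$), so all the ``for all $t \le n^k/\mu$'' statements apply.

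First I would handle conditions (1) and (2), which are properties of the environment alone and do not involve the walk. Condition (1) is immediate from Corollary~\ref{cor:giantandsmall}: starting from stationarity (or after waiting time $11d\log n/\mu$ for all edges to refresh, which happens with probability $1 - o(n^{-10d})$), the giant cluster has size in $((\theta(p)-\epsilon)n^d, (\theta(p)+\epsilon)n^d)$ for all $t \le n^k/\mu$ with probability $1 - o(1)$; choosing $\epsilon = \delta\theta(p)$ gives the stated bound, and inspection of the proof of Corollary~\ref{cor:giantandsmall} (via Lemma~\ref{lem:deterministic}, which gives stretched-exponential decay $\exp(-cn^{d/(d+1)})$) shows the failure probability is in fact $o(n^{-10d})$ after a union bound over the (polynomially many) refresh times in the interval. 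Condition (2) follows the same way from Corollary~\ref{cor:pete}: at each fixed time the percolation configuration $\eta_t$ is distributed as $\pi_p$ (once all edges have refreshed, which again costs $o(n^{-10d})$), so Corollary~\ref{cor:pete} with $c' = 11d$ gives that the isoperimetric inequality $|\partial_{\eta_t}S| \ge c_2|S|^{1-1/d}/\log n$ holds for all relevant $S \subseteq \GG_t$ with probability $\ge 1 - n^{-11d}$; a union bound over the polynomially many integer times $t$ in $[11d\log n/\mu, t(n)\log n]$ (and a short argument that it suffices to control integer times, or alternatively that the number of distinct configurations in a bounded interval is controlled) yields failure probability $o(n^{-10d})$.

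Next I would address conditions (3) and (4), which concern the quenched law of the walk and hence require transferring an annealed statement to a quenched one. Condition (3) is exactly the content of Corollary~\ref{cor:excellent} with the parameters $\gamma$ chosen so that $8d+26+\gamma$ and $8d+27+\gamma$ match the exponents appearing in $t(n)$ and $N$ (here $N = (\log n)^{8d+20}n^2$ forces $\gamma = 8d+20$... one checks the bookkeeping so that $8d+27+\gamma \le 16d+47$), but Corollary~\ref{cor:excellent} is an \emph{annealed} statement: it bounds $\prstart{\tau_N \le t(n)}{x,\eta_0}$ averaged over the environment. To get the quenched statement in Definition~\ref{def:goodenv}(3), I would apply Corollary~\ref{cor:excellent} with a much larger power of $n$ in the failure bound (its $\alpha$ can be taken arbitrarily large, e.g.\ $\alpha = 20d$), then use Markov's inequality on the environment: the annealed probability that the \emph{quenched} probability $\prstart{\tau_N > t(n)}{x,\eta}$ exceeds $n^{-10d}$ is at most $n^{10d} \cdot n^{-20d} = n^{-10d}$, and a union bound over the $n^d$ starting points $x$ still leaves failure probability $o(n^{-9d})$, which can be absorbed by increasing $\alpha$ further. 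Condition (4) is softer: $\prstart{\tau_N < \infty}{x,\eta} = 1$ for all $x$ and a.e.\ $\eta$ follows because there are a.s.\ infinitely many good times (as noted in the proof of Lemma~\ref{lem:extra}), infinitely many of these are excellent (by the conditional domination by Bernoulli$(1/2)$ variables and Borel--Cantelli / Lévy's extension), so $\tau_N < \infty$ a.s.; this holds for a.e.\ $\eta$ under $\mathcal{P}_{\eta_0}$ and for every $x$ simultaneously since there are only finitely many.

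The main obstacle I anticipate is the annealed-to-quenched transfer in condition (3): one must be careful that the Markov-inequality trick is applied to a probability that is itself a (measurable) function of the environment path $\eta$, and that the event ``$\eta$ is such that the quenched probability is large'' depends only on $\eta$ and not on any extra randomness — this is fine but needs the randomised stopping time framework and Theorem~\ref{thm:coupling} to be set up so that the quenched law $\prstart{\cdot}{x,\eta}$ makes sense as a kernel. A secondary subtlety is matching all the polynomial exponents consistently: one has to propagate the $\gamma$'s through Lemmas~\ref{lem:goodtimesc}, \ref{lem:extra}, Corollary~\ref{cor:excellent} and verify they fit inside $t(n) = (\log n)^{16d+47}(n^2+1/\mu)$ and $N = (\log n)^{8d+20}n^2$ — this is routine but error-prone. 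Everything else is a union bound over polynomially many times against stretched-exponential or polynomial tail estimates already established, and choosing $c_1, c_2$ from Corollary~\ref{cor:pete} and $c_3$ as the sum of the finitely many constants from the four union bounds.
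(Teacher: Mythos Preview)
Your proposal is correct and follows essentially the same approach as the paper: handle (1) and (2) by a union bound over the polynomially many refresh times using the stretched-exponential estimates of Lemma~\ref{lem:deterministic} and Corollary~\ref{cor:pete} (after waiting $11d\log n/\mu$ for all edges to refresh), handle (3) via Corollary~\ref{cor:excellent} with large $\alpha$ plus Markov's inequality and a union bound over $x$, and handle (4) by an almost-sure argument. The only minor difference is that for (4) the paper uses a slightly simpler observation---that almost surely there are infinitely many unit time intervals during which \emph{all} edges are open, and such times are automatically excellent---rather than your route through infinitely many good times and the Bernoulli$(1/2)$ domination; both arguments are valid.
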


\begin{proof}[\bf Proof]

We first prove that for all $n$ sufficiently large and all $\eta_0$
\begin{align}\label{eq:12conv}
\prstart{\eta \text{ satisfies (1) and (2)}}{\eta_0} \geq 1 -\frac{1}{n^{10d}}.
\end{align}
The number of times that the Poisson clocks on the edges ring between times $11d\log n/\mu$ and~$t(n) \log n$ is a Poisson random variable of parameter at most $d(n^d \mu) \cdot t(n) \log n$. 
 Note that all edges update by time $\frac{11d \log n}{\mu}$ with probability at least
$1-\frac{d}{n^{10d}}$.
Using large deviations for the Poisson random variable, Lemma~\ref{lem:deterministic} and Corollary~\ref{cor:pete} for suitable constants $c$ and $\alpha$ prove~\eqref{eq:12conv}. Corollary~\ref{cor:excellent}, Markov's inequality and a union bound over all $x$ immediately imply
\[
\prstart{\eta \text{ satisfies (3)}}{\eta_0} \geq 1 - \frac{d}{n^{10d}}.
\]
Finally, to prove that $\eta$ satisfies (4) with probability $1$, we note that for almost every environment there will be infinitely many times at which all edges will be open for unit time and so at these times the intersection of the giant component with the evolving set will be large. Therefore such times are necessarily
excellent.
\end{proof}

For all $\delta \in (0,1)$ we now define
\begin{align}\label{eq:taudelta}
\tau_\delta= \inf\{t\in \N: |S_t\cap \GG_t| \geq (1-\delta) |\GG_t|\}.
\end{align}
The goal of this section is to prove the following:

\begin{proposition}\label{pro:90g}
	Let $\delta\in (0,1)$. There exists a positive constant $c$ so that the following holds: for all $n$, if $\eta$ is a $\delta$-good environment, then   for all starting points $x$ we have
	\[
	\prstart{\tau_\delta\leq \tt }{x,\eta} \geq  1 -\frac{c}{n^{10d}}.
	\]
\end{proposition}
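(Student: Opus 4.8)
## Proof proposal for Proposition~\ref{pro:90g}

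\textbf{Overall strategy.} The plan is to analyze the $Z$-process $Z_n = \sqrt{\pi(S_n^{\#})}/\pi(S_n)$ from Lemma~\ref{lem:zprocess} along the sequence of excellent times $\tau_1,\tau_2,\ldots$, and show that each excellent time contributes a definite multiplicative decrease to $Z$ (equivalently, a growth of $|S_t|$) as long as $S_t$ has not yet captured a $(1-\delta)$-fraction of the giant cluster, i.e.\ as long as $t<\tau_\delta$. Since $Z\geq 1$ always and $Z$ starts at roughly $n^{d/2}$, only $O(\log n)$ worth of "effective" multiplicative decrease is available, so after polynomially-in-$\log n$ many excellent times we must have reached $\tau_\delta$. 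Combining this with Corollary~\ref{cor:excellent} (property (3) of a $\delta$-good environment, which guarantees $\tau_N\leq t(n)$ with probability $1-n^{-10d}$ for $N=(\log n)^{8d+20}n^2$, far more excellent times than needed) yields the claim.

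\textbf{Key steps in order.}
First I would make precise the drift at an excellent time. Fix an excellent integer time $t<\tau_\delta$. By Definition~\ref{def:goodexcellent}, $|S_t\cap\GG_t|\geq |S_t|/(\log n)^{4d+12}$ and the time-averaged edge boundary over $[t,t+1]$ is at least $|\partial_{\eta_t}S_t|/2$. The conductance $\phi_{t+1}(\cdot)$ for the discrete chain $p_{t+1}^\eta$ is, up to the $1/(2d)$ jump-rate normalization and the laziness, controlled from below by this time-averaged boundary divided by $|S_t|$ (this is where the integral-over-$[t,t+1]$ in the definition of "excellent" is used — the one-step discrete transition integrates the environment over a unit time interval). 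So it suffices to lower-bound $|\partial_{\eta_t}S_t|$ in terms of $|S_t|$. Here I would intersect with the giant: writing $T_t := S_t\cap \GG_t$, I have $|T_t|\geq |S_t|/(\log n)^{4d+12}$, and I would apply the isoperimetric inequality (2) of the $\delta$-good environment (i.e.\ Corollary~\ref{cor:pete} applied to $\eta_t$) to $T_t$ — provided $c_1(\log n)^{d/(d-1)}\leq |T_t|\leq (1-\delta)|\GG_t|$ — to get $|\partial_{\eta_t}T_t|\geq c_2|T_t|^{1-1/d}/\log n$. Every edge of $\partial_{\GG_t}T_t$ that leaves $S_t$ contributes to $\partial_{\eta_t}S_t$; edges of $\partial_{\GG_t}T_t$ with both endpoints in $S_t$ would have to go from $T_t=S_t\cap\GG_t$ to $S_t\setminus\GG_t$, but that is impossible since an edge of $\GG_t$ has both endpoints in $\GG_t$. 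Hence $|\partial_{\eta_t}S_t|\geq |\partial_{\GG_t}T_t|\geq c_2|T_t|^{1-1/d}/\log n \gtrsim |S_t|^{1-1/d}/(\log n)^{1+(4d+12)(1-1/d)}$. Thus $\phi_{t+1}(1/Z_t^2)\gtrsim |S_t|^{-1/d}/(\log n)^{O(d)}$. Since for $t<\tau_\delta$ (and once $S_t$ is not a huge fraction of everything) $|S_t|\leq n^d$, we get $\phi_{t+1}(1/Z_t^2)\gtrsim 1/(n (\log n)^{O(d)})$ — wait, this alone is too weak; the point is rather that $\phi^2$ scaled against $Z_t$ gives a \emph{relative} decrease. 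I would instead track $\log Z_t$: Lemma~\ref{lem:zprocess} with $\gamma\asymp 1/(2d)$ gives $\E[\log(Z_{t+1}/Z_t)\mid \F_t]\lesssim -\phi_{t+1}(1/Z_t^2)^2 \lesssim -|S_t|^{-2/d}/(\log n)^{O(d)}$. This is still $|S_t|$-dependent, so I should not sum naively; instead, use the standard Morris–Peres trick of breaking the range of $|S_t|$ into dyadic blocks $[2^j,2^{j+1})$ and observing that while $|S_t|\in[2^j,2^{j+1})$, each excellent step decreases $Z$ by a factor $\geq 1 - c\,2^{-2j/d}/(\log n)^{O(d)}$; since $|S_t|$ is a submartingale (in the Doob-transformed evolving set, $\pi(S_t)$ is a martingale, so $|S_t|$ is a martingale), once it leaves a dyadic block it returns only with controlled probability, and one shows that in total only $(\log n)^{O(d)}$ excellent times are spent across all blocks before $|S_t|\geq (1-\delta)|\GG_t|$. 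Actually the cleanest route: since $Z_t\geq 1$ and $Z_0\leq \sqrt 2\, n^{d/2}$, and each excellent time strictly before $\tau_\delta$ multiplies $\E[Z_{t+1}\mid\F_t]$ by a factor $\leq 1-c/(\log n)^{O(d)}$ \emph{when $|S_t|$ is within the giant-comparable regime where the isoperimetry kicks in} — hmm, the factor does depend on $|S_t|^{-2/d}$. Let me just use: consider $W_t = |S_t|^{1/d}$ or rather sum the drift. With $|S_t|\le n^d$, each excellent step gives additive drift on $Z_t$ of at least $c\,Z_t\,|S_t|^{-2/d}/(\log n)^{O(d)} \ge c\, Z_t / (n^2 (\log n)^{O(d)})$, so after $N\asymp n^2(\log n)^{8d+20}$ excellent times the expected decrease forces $Z_t$ down to $O(1)$, hence $\pi(S_t^\#)\asymp 1$. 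The tiny-$|T_t|$ and large-$|S_t|$ boundary cases are handled separately (when $|S_t\cap \GG_t| < c_1(\log n)^{d/(d-1)}$ the boundary still has $\geq$ one edge, giving a crude but sufficient bound; when $|S_t|>(1-\delta)|\GG_t|$ we're essentially done anyway).

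Then I would assemble: on the $\delta$-good event, by Corollary~\ref{cor:excellent} we have $\tau_N\leq t(n)$ with probability $\geq 1-n^{-10d}$, so with that probability there are at least $N=(\log n)^{8d+20}n^2$ excellent times before $t(n)$; by the $Z$-drift bound above (a Doob-type maximal/optional-stopping argument on the supermartingale $Z_t\prod(1-\text{drift})^{-1}$, or simply Markov on $Z_{\tau_N}$), $Z_{\tau_N}$ is $O(1)$ except with probability $O(n^{-10d})$, which forces $|S_{\tau_N}\cap \GG_{\tau_N}|\geq (1-\delta)|\GG_{\tau_N}|$ using $|\GG_{\tau_N}|\gtrsim n^d$ from property (1); hence $\tau_\delta\leq \tau_N \leq t(n)$ with probability $\geq 1-c\,n^{-10d}$. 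Property (4) ensures $\tau_\delta<\infty$ a.s.\ so the infimum in the event is well-defined.

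\textbf{Main obstacle.} The delicate point is the bookkeeping of the drift as $|S_t|$ ranges over all scales from $O(1)$ up to $\Theta(n^d)$: the isoperimetric gain $\phi^2 \asymp |S_t|^{-2/d}$ degrades as $S_t$ grows, so one must verify that the \emph{total} decrease of $\log Z$ (or the total number of excellent times needed) only accumulates a $(\log n)^{O(d)}$ and $n^2$ factor overall — exactly matching the budget $N=(\log n)^{8d+20}n^2$ built into the definition of $t(n)$. This is the standard evolving-sets-meets-isoperimetric-profile computation, but here it is complicated by (a) the $1/(\log n)^{4d+12}$ loss from only controlling the \emph{giant's} share of $S_t$ rather than $S_t$ itself, (b) the extra $1/\log n$ loss in Pete's profile from Corollary~\ref{cor:pete}, and (c) the fact that excellent times are not all integer times $0,1,2,\ldots$ but a sparse subsequence, so between consecutive excellent times $Z$ is merely a supermartingale with no guaranteed decrease — which is fine since $Z$ being a supermartingale means non-excellent steps can only help, but one must phrase the optional-stopping argument carefully so these steps don't hurt the bound. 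I expect properly tracking the exponent of $\log n$ through (a)–(c) and confirming it lands below $8d+20$ to be the bulk of the real work.
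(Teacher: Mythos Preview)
Your overall strategy is exactly the paper's: track $Y_i=Z_{\tau_i}\1(\tau_\delta\wedge t(n)>\tau_i)$, use Lemma~\ref{lem:zprocess} plus the $\delta$-good isoperimetric property~(2) at each excellent time to get a multiplicative drift $\widehat{\mathbb{E}}[Y_{i+1}\mid\F_{\tau_i}]\le Y_i(1-f(Y_i))$, then use enough excellent times ($N=(\log n)^{8d+20}n^2$) together with property~(3) to conclude. The paper packages the drift into a separate lemma (Lemma~\ref{lem:supergoodtimes}) and then invokes \cite[Lemma~11(iii)]{MorrisPeres} to convert the profile into the integral bound $\int_\epsilon^{n^{d/2}}dz/(zf(z))\lesssim n^2(\log n)^{2\beta+1}$; your crude uniform bound $\phi^2\gtrsim n^{-2}(\log n)^{-2\beta}$ also works since $2\beta=8d+18-24/d<8d+20$, so this is just a stylistic difference.

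There are, however, two real slips in your execution. First, ``when $|S_t|>(1-\delta)|\GG_t|$ we're essentially done anyway'' is false: $\tau_\delta$ is defined by $|S_t\cap\GG_t|\ge(1-\delta)|\GG_t|$, not $|S_t|\ge(1-\delta)|\GG_t|$, and these are genuinely different. The correct treatment of the regime $|S_t|\ge n^d/2$ (still with $t<\tau_\delta$) is to bound $\phi_{t+1}(S_t^c)$: since $|S_t\cap\GG_t|<(1-\delta)|\GG_t|$ the isoperimetric bound~(2) still applies to $S_t\cap\GG_t$, giving $|\partial_{\eta_t}S_t|\gtrsim |S_t|^{1-1/d}/(\log n)^\beta$, and then $\phi_{t+1}(S_t^c)\ge |\partial_{\eta_t}S_t|/|S_t^c|\gtrsim n^{-1}(\log n)^{-\beta}$. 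Second, ``$Z_{\tau_N}$ is $O(1)$\dots which forces $|S_{\tau_N}\cap\GG_{\tau_N}|\ge(1-\delta)|\GG_{\tau_N}|$'' is a non-sequitur: knowing $|S_{\tau_N}|$ is large and $|\GG_{\tau_N}|\gtrsim n^d$ does not bound their intersection. The right endgame (which the paper does) is the contrapositive: on $\{\tau_\delta\wedge t(n)>\tau_N\}$ one checks $Z_{\tau_N}$ is bounded \emph{below} by a constant --- either $\sqrt 2$ if $\pi(S_{\tau_N})<1/2$, or $\sqrt{\delta(1-\delta)\theta(p)}$ if $\pi(S_{\tau_N})\ge 1/2$ (since then $|S_{\tau_N}^c|\ge|S_{\tau_N}^c\cap\GG_{\tau_N}|\ge\delta|\GG_{\tau_N}|$) --- and then Markov's inequality on $\widehat{\mathbb{E}}[Z_{\tau_N}\1(\tau_\delta\wedge t(n)>\tau_N)]\le n^{-10d}$ bounds the bad event directly. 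Once you fix these two points your argument goes through.
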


Recall from Section~\ref{sec:evolve} the definition of $(Z_k)$ for a fixed environment $\eta$ via
\[
Z_k = \frac{\sqrt{\pi(S_k^{\#})}}{\pi(S_k)}.
\]
Note that we have suppressed the dependence on $\eta$ for ease of notation. The following lemma on the drift of $Z$ using the isoperimetric profile will be crucial in the proof of Proposition~\ref{pro:90g}.

\begin{lemma}\label{lem:supergoodtimes}
Let $\eta$ be a $\delta$-good environment with $\delta\in (0,1)$. Then for all $n$ sufficiently large and for all $1\leq i\leq N$ (recall Definition~\ref{def:goodenv}) we have almost surely
\[
\escondh{Z_{\tau_{i+1}} \1(\tau_\delta\wedge \tt>\tau_{i+1})}{\F_{\tau_i}}{\eta} \leq Z_{\tau_i}\1(\tau_\delta\wedge \tt>\tau_{i}) \left( 1- \left(\phi\left(\frac{1}{Z^2_{\tau_i}}\right)\right)^2\right),
\]
where $\F_t$ is the $\sigma$-algebra generated by the evolving set up to time $t$ and
$(\tau_i)$ is the sequence of excellent times associated to the environment $\eta$ and $\phi$ is defined as
\[
\phi(r) = \begin{cases}
 c \cdot (\log n)^{-\beta}\cdot  n^{-1}\cdot r^{-1/d} \quad &\mbox{if} \quad \frac{(\log n)^{\alpha}}{n^d}\leq r
\leq \frac{1}{2}	\\
	c \cdot n^{-d}\cdot r^{-1}  &\mbox{if} \quad r< \frac{(\log n)^{\alpha}}{n^d} \\
	c \cdot 2^{1/d}\cdot (\log n)^{-\beta}\cdot n^{-1} &\mbox{if} \quad r\in \left[\frac{1}{2},\infty\right)
\end{cases} 
\]	
with $\alpha =4d+12+d/(d-1)$, $\beta= 4d+9-12/d$ and $c$ a positive constant.
\end{lemma}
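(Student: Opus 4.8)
The plan is to reduce Lemma~\ref{lem:supergoodtimes} to Lemma~\ref{lem:zprocess}. Recall that between consecutive integer times the environment $\eta$ gives us a time-inhomogeneous Markov chain with holding probability at each vertex bounded below by some $\gamma>0$ (since the walk jumps at rate $1$ over $2d$ edges, the probability of not moving in unit time is at least $e^{-1}$, giving $\gamma\geq e^{-1}$; more carefully one uses $\gamma \asymp 1$). Applying Lemma~\ref{lem:zprocess} to the single step from time $\tau_i$ to time $\tau_i+1$ yields
\[
\escondh{Z_{\tau_i+1}}{\F_{\tau_i}}{\eta} \leq Z_{\tau_i}\left(1-\tfrac{\gamma^2}{2(1-\gamma)^2}\left(\phi_{\tau_i+1}\left(\tfrac{1}{Z_{\tau_i}^2}\right)\right)^2\right),
\]
where $\phi_{\tau_i+1}$ is the conductance profile of the single-step transition matrix $p_{\tau_i+1}^\eta$. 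Since $Z$ is a supermartingale under the Doob transform (the unconditioned version of Lemma~\ref{lem:zprocess} with the trivial bound $\phi\geq 0$), we may iterate from time $\tau_i+1$ up to time $\tau_{i+1}$ at no cost, and the indicator $\1(\tau_\delta\wedge\tt>\tau_{i+1})$ only makes the left-hand side smaller; restricting attention to the event $\{\tau_\delta\wedge\tt>\tau_i\}\in\F_{\tau_i}$ (on which the excellent time $\tau_i$ actually occurs before we stop) supplies the indicator on the right-hand side. So the whole lemma comes down to the \emph{quantitative isoperimetric estimate}: on a $\delta$-good environment, for every excellent time $t=\tau_i$ with $\tau_i < \tau_\delta\wedge\tt$ and every set $S=S_{\tau_i}$,
\[
\phi_{\tau_i+1}\!\left(\pi(S)\right)\ \geq\ \text{const}\cdot\phi\!\left(\pi(S)\right),
\]
with $\phi$ as defined in the statement; absorbing the constant $\gamma^2/(2(1-\gamma)^2)$ into the constant $c$ inside $\phi$ finishes the proof.

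The main work is therefore to establish this conductance lower bound, and this is where the hypotheses "$\delta$-good", "good time" and "excellent time" all get used. First I would unwind $\phi_{\tau_i+1}(\pi(S))=Q_{p_{\tau_i+1}^\eta}(S,S^c)/\pi(S)$. Because $\pi$ is uniform on $\Z_n^d$, $\pi(S)=|S|/n^d$ and $Q_{p_{\tau_i+1}^\eta}(S,S^c)$ is, up to a factor $1/(2d\,n^d)$ and a factor coming from the single-step jump probabilities, comparable to the number of time-integrated open edges leaving $S$ during $[\tau_i,\tau_i+1]$. Using that $\tau_i$ is \emph{excellent}, this integrated edge boundary is at least $\tfrac12|\partial_{\eta_{\tau_i}}S|$. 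Using that $\tau_i$ is \emph{good}, $|S\cap\GG_{\tau_i}|\geq |S|/(\log n)^{4d+12}$, so $S\cap\GG_{\tau_i}$ is a substantial chunk of $S$ living inside the giant cluster, to which Pete's isoperimetric profile (condition (2) of $\delta$-good, i.e. Corollary~\ref{cor:pete}) applies: provided $c_1(\log n)^{d/(d-1)}\leq |S\cap\GG_{\tau_i}| \leq (1-\delta)|\GG_{\tau_i}|$ we get $|\partial_{\eta_{\tau_i}}S|\geq |\partial_{\GG_{\tau_i}}(S\cap\GG_{\tau_i})| - (\text{boundary to the small clusters}) \gtrsim |S\cap\GG_{\tau_i}|^{1-1/d}/\log n \gtrsim (|S|/(\log n)^{4d+12})^{1-1/d}/\log n$. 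Translating $|S|=r n^d$ this produces exactly the middle branch $c(\log n)^{-\beta} n^{-1} r^{-1/d}$ with $\beta=4d+9-12/d$, once one is careful that the small clusters contribute a negligible correction to the boundary (Remark~\ref{rem:boundonsizefromdiam}: each has boundary $O((\log n)^d)$, and there are few of them relative to $|S|$ when $|S|$ is polynomially large).

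It remains to handle the two regimes where Pete's estimate does not directly give what we want. When $r=\pi(S)<(\log n)^\alpha/n^d$ with $\alpha=4d+12+d/(d-1)$, the set $S\cap\GG_{\tau_i}$ is too small for Corollary~\ref{cor:pete} (it may have fewer than $c_1(\log n)^{d/(d-1)}$ vertices), but then we only need the trivial bound that the (integrated, open) edge boundary is at least a constant — indeed at least one open edge leaves $S\cap\GG_{\tau_i}$ for at least half of $[\tau_i,\tau_i+1]$ since $\GG_{\tau_i}$ is connected and $S\neq\GG_{\tau_i}$, and one should note here that on $\{\tau_i<\tau_\delta\}$ we have $|S_{\tau_i}\cap\GG_{\tau_i}|<(1-\delta)|\GG_{\tau_i}|$ so $S$ genuinely does not contain the whole giant — giving $\phi_{\tau_i+1}(\pi(S)) \gtrsim 1/(n^d\pi(S)) = 1/|S|$, i.e. the branch $c n^{-d} r^{-1}$. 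When $r\in[1/2,\infty)$ the profile is, by the convention $\psi_p(r)=\psi_p(1/2)$, frozen at its value at $r=1/2$, i.e. the middle-branch expression evaluated at $r=1/2$, which is $c\,2^{1/d}(\log n)^{-\beta}n^{-1}$; this is just the third branch, and one must only check that the hypothesis $\tau_i<\tau_\delta$ together with the excellent/good conditions still lets us apply Corollary~\ref{cor:pete} at the threshold $|S\cap\GG_{\tau_i}|$ of order $n^d/(\log n)^{4d+12}$, which sits safely between $c_1(\log n)^{d/(d-1)}$ and $(1-\delta)|\GG_{\tau_i}|$ for large $n$. I expect the main obstacle to be \emph{bookkeeping the logarithmic exponents}: keeping track through the chain of inequalities (excellent $\Rightarrow$ half the boundary survives; good $\Rightarrow$ lose $(\log n)^{4d+12}$; Pete $\Rightarrow$ lose another $\log n$; raising to power $1-1/d$) so that one lands on precisely $\beta=4d+9-12/d$ and $\alpha = 4d+12+d/(d-1)$, and ensuring that the contribution of the small clusters to the edge boundary of $S$ (as opposed to the edge boundary of $S\cap\GG_{\tau_i}$ within $\GG_{\tau_i}$) is genuinely lower order in every regime where it matters.
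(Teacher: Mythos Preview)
Your overall strategy matches the paper's: reduce to a one–step bound at the excellent time $\tau_i$, use the supermartingale property of $Z$ to pass from $\tau_i+1$ to $\tau_{i+1}$, and then prove a conductance lower bound for the realized set $S_{\tau_i}$ in three regimes using the good/excellent conditions together with Pete's isoperimetry (condition (2) of a $\delta$-good environment). The exponents $\alpha,\beta$ and the three branches of $\phi$ fall out exactly as you describe.

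There is one genuine imprecision you should fix. You propose to invoke Lemma~\ref{lem:zprocess} as a black box for the step $\tau_i\to\tau_i+1$; but the $\phi_{n+1}(\cdot)$ appearing there is the \emph{profile}, i.e.\ an infimum over \emph{all} sets of a given measure. The isoperimetric bound you derive, however, uses the good/excellent properties, which hold only for the specific realized set $S_{\tau_i}$, not for arbitrary sets of the same size. So you cannot bound the profile this way. The paper avoids this by not applying Lemma~\ref{lem:zprocess} directly: instead it reopens its proof, conditioning on $\{\tau_i=t,\ S_t=S\}$, obtaining $\escond{\sqrt{\pi(S_{t+1}^{\#})/\pi(S_t^{\#})}}{S_t=S}{\eta}\le 1-\tfrac18(\phi_{t+1}(S))^2$ (respectively with $S^c$ when $|S|>n^d/2$) for the \emph{specific} set $S$, and only then plugs in the isoperimetric bound for that $S$. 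Your argument is easily repaired in the same way, but as written it conflates the profile with the conductance of $S_{\tau_i}$.

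Two smaller remarks. First, the inequality $|\partial_{\eta_t}S_t|\ge |\partial_{\GG_t}(S_t\cap\GG_t)|$ is immediate (every $\GG_t$-edge leaving $S_t\cap\GG_t$ is an open edge leaving $S_t$), so your ``minus boundary to the small clusters'' correction is unnecessary and only obscures the argument. Second, your handling of the large case via ``the profile is frozen at $r=1/2$'' again relies on the profile; the paper instead bounds $\phi_{t+1}(S_t^c)\ge \tfrac{1}{4de}\,|\partial_{\eta_t}S_t|/|S_t^c|$ directly and uses $|S_t|\ge n^d/2$ to reach $c\,2^{1/d}(\log n)^{-\beta}n^{-1}$.
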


\begin{proof}[\bf Proof]
Since $\tau_\delta$ is a stopping time, it follows that $\{\tau_\delta\wedge \tt>\tau_i\}\in \F_{\tau_i}$, and hence we obtain
\begin{align}\label{eq:firsteq}
	\escondh{Z_{\tau_{i+1}}\1(\tau_\delta\wedge \tt>\tau_{i+1})}{\F_{\tau_i}} {\eta}\leq \1(\tau_\delta\wedge \tt>\tau_{i})\escondh{Z_{\tau_{i+1}}}{\F_{\tau_i}} {\eta}.
\end{align}
Lemma~\ref{lem:zprocess} implies that $Z$ is a positive supermartingale and since $\eta$ is a $\delta$-good environment, we have  $\tau_N<\infty$ $\mathbb{P}_\eta$-almost surely. We thus get for all $0\leq i\leq N-1$
\begin{align*}
\escondh{Z_{\tau_{i+1}}}{\F_{\tau_i}} {\eta}\leq \escondh{Z_{\tau_{i}+1}}{\F_{\tau_i}}{\eta}.
\end{align*}
Using the Markov property and the fact that $\F_{\tau_i}$ is countably generated gives
\begin{align}\label{eq:codexp}
	\escondh{Z_{\tau_i+1}}{\F_{\tau_i}} {\eta} \leq  \sum_{t,S} \escondh{Z_{t+1}}{\tau_i=t, S_t=S}{\eta} \1(\tau_i=t, S_t=S).
\end{align}
Since $\tau_i$ is a stopping time, the event $\{\tau_i=t\}$ only depends on $(S_{u})_{u\leq t}$. The distribution of $S_{t+1}$ only depends on $S_t$ and the outcome of the independent uniform random variable $U_{t+1}$. Therefore we obtain
\begin{align}\label{eq:n5}
\begin{split}
	\escondh{Z_{t+1}}{\tau_i=t, S_t=S}{\eta} &=\frac{\sqrt{\pi(S^{\#})}}{\pi(S)} \cdot 
	\escondh{\frac{Z_{t+1}}{Z_t}}{S_t=S}{\eta} \\&= \frac{\sqrt{\pi(S^{\#})}}{\pi(S)} \cdot \escond{\sqrt{\frac{\pi(S_{t+1}^{\#})}{\pi(S_t^{\#})}}}{S_t=S}{\eta}, 
	\end{split}
	\end{align}
where for the last equality we used the transition probability of the Doob transform of the evolving set. If $1\leq |S|\leq n^d/2$, then for all $n$ sufficiently large
\begin{align}\label{eq:n1}
	\escond{\sqrt{\frac{\pi(S_{t+1}^{\#})}{\pi(S_t^{\#})}}}{S_t=S}{\eta}\leq\escond{\sqrt{\frac{\pi(S_{t+1})}{\pi(S_t)}}}{S_t=S}{\eta} =1-\psi_{t+1}(S) \leq 1 - \frac{1}{8}\cdot \left(\phi_{t+1}(S)\right)^2,
\end{align}
where the equality is simply the definition of $\psi_{t+1}$ and the last inequality follows from Lemma~\ref{lem:phipsi}, since 
\[
\prcond{X_{t+1}=x}{X_t=x}{\eta} \geq e^{-1}.
\]
 Similarly, if $n^d>|S|>n^d/2$, then, using the fact that the complement of an evolving set process is also an evolving set process, we get
\begin{align}\label{eq:n2}
\escond{\sqrt{\frac{\pi(S_{t+1}^{\#})}{\pi(S_t^{\#})}}}{S_t=S}{\eta}\leq\escond{\sqrt{\frac{\pi(S_{t+1}^c)}{\pi(S_t^c)}}}{S_t=S}{\eta} =1-\psi_{t+1}(S^c) \leq 1 - \frac{1}{8}\cdot \left(\phi_{t+1}(S^c)\right)^2.
\end{align}
Plugging in the definition of $\phi_{t+1}$ we deduce for all $1\leq |S|<n^d$
\begin{align*}
	\phi_{t+1}(S) = \frac{1}{|S|}\sum_{x\in S}\sum_{y\in S^c} \prcond{X_{t+1}=y}{X_t=x}{\eta} &\geq  \frac{1}{2de|S|} \sum_{x\in S} \sum_{y\in S^c} \int_{t}^{t+1}\eta_s(x,y)\,ds\\
	\phi_{t+1}(S^c) = \frac{1}{|S^c|}\sum_{x\in S}\sum_{y\in S^c} \prcond{X_{t+1}=y}{X_t=x}{\eta} &\geq  \frac{1}{2de|S^c|} \sum_{x\in S} \sum_{y\in S^c} \int_{t}^{t+1}\eta_s(x,y)\,ds.
	\end{align*}
Since in~\eqref{eq:codexp} we multiply by $\1(\tau_i=t, S_t=S)$ from now on we take $t$ to be an excellent time, and hence we get from Definition~\ref{def:goodexcellent}
\begin{align}\label{eq:ssc}
\phi_{t+1}(S_t)\geq \frac{1}{4de}\cdot \frac{|\partial_{\eta_{t}}S_t|}{|S_t|}, \quad \phi_{t+1}(S^c_t)\geq \frac{1}{4de}\cdot \frac{|\partial_{\eta_{t}}S_t|}{|S^c_t|} \quad \text{ and } \quad  |S_t\cap \GG_t|\geq \frac{|S_t|}{(\log n)^{4d+12}}.
\end{align}
Since $|\partial_{\eta_t}S_t| \geq |\partial_{\GG_t}S_t| = |\partial_{\GG_t}(\GG_t\cap S_t)|$ we have
\[
\phi_{t+1}(S_t)\geq\frac{1}{4de}\cdot \frac{|\partial_{\GG_{t}}(\GG_t \cap S_t)|}{|S_t|} \quad \text{ and } \quad   \phi_{t+1}(S^c_t)\geq \frac{1}{4de}\cdot \frac{|\partial_{\GG_{t}}(\GG_t \cap S_t)|}{|S^c_t|}.
\]
If $|S_t|\leq c_1(\log n)^{4d+12+d/(d-1)}$, then, since $\eta$ is a $\delta$-good environment, $|\GG_t| \geq (1-\delta)\theta(p)n^d$, and hence,  
 we use the obvious bound
\begin{align}\label{eq:obviousbound}
\phi_{t+1}(S_t) \geq \frac{1}{4de}\cdot\frac{1}{|S_t|}.
\end{align}
Next, if $\frac{n^d}{2}>|S_t|>c_1(\log n)^{4d+12+d/(d-1)}$, then using~\eqref{eq:ssc} and the fact that we are on the event~$\{\tau_\delta\wedge \tt>t\}$ we get that 
$$c_1(\log n)^{d/(d-1)}\leq |\GG_t\cap S_t|\leq (1-\delta)|\GG_t|.$$ 
Therefore, since $\eta$ is a $\delta$-good environment and $t\leq \tt$, (2) of Definition~\ref{def:goodenv} gives that in this case
\begin{align}\label{eq:n3}
	\phi_{t+1}(S_t) \geq \frac{c_2}{4de}  \cdot \frac{|\GG_t\cap S_t|^{1-\frac{1}{d}}}{(\log n)|S_t|} \geq \frac{c}{(\log n)^{4d+9-12/d}}\cdot \frac{|S_t|^{1-\frac{1}{d}}}{|S_t|} = \frac{c}{(\log n)^{4d+9-12/d}}\cdot \frac{1}{|S_t|^{1/d}},
\end{align}
where $c$ is a positive constant and for the second inequality we used~\eqref{eq:ssc} again.

Finally when $|S_t|\geq \frac{n^d}{2}$, on the event $\{\tau_\delta\wedge \tt>t\}$ we have from~\eqref{eq:ssc} and using again (2) of Definition~\ref{def:goodenv}
\begin{align}\label{eq:n4}
\phi_{t+1}(S_t^c) \geq \frac{c}{(\log n)^{4d+9-12/d}} \cdot \frac{|S_t|^{1-\frac{1}{d}}}{n^d - |S_t|}\geq \frac{c \cdot 2^{1/d}}{(\log n)^{4d+9-12/d}} \cdot n^{-1}.
\end{align}
Substituting~\eqref{eq:obviousbound}, \eqref{eq:n3} and~\eqref{eq:n4} into~\eqref{eq:n1} and~\eqref{eq:n2} and then  into~\eqref{eq:firsteq}, \eqref{eq:codexp} and~\eqref{eq:n5} we deduce
\begin{align*}
	\escondh{Z_{\tau_{i+1}}\1(\tau_\delta\wedge \tt>\tau_{i+1})}{\F_{\tau_i}} {\eta}\leq Z_{\tau_i}\1(\tau_\delta\wedge \tt>\tau_{i}) \left(1 - \frac{1}{8}(\phi_{\tau_i+1}(S_{\tau_i}^c))^2 \right) &\1\left(|S_{\tau_i}|\geq \frac{n^d}{2}\right) \\
	+Z_{\tau_i}\1(\tau_\delta\wedge \tt>\tau_{i}) \left(1 - \frac{1}{8}(\phi_{\tau_i+1}(S_{\tau_i}))^2 \right) &\1\left(|S_{\tau_i}|< \frac{n^d}{2}\right)\\
	\leq Z_{\tau_i}\1(\tau_\delta\wedge \tt>\tau_{i}) \left(1 - (\phi(\pi(S_{\tau_i})))^2 \right),
\end{align*}
where the function $\phi$ is given by
\[
\phi(r) = \begin{cases}
 c \cdot (\log n)^{-\beta}\cdot n^{-1}\cdot r^{-1/d} \quad &\mbox{if} \quad \frac{(\log n)^{\alpha}}{n^d}\leq r
\leq \frac{1}{2}	\\
	c\cdot n^{-d}\cdot r^{-1}  &\mbox{if} \quad r< \frac{(\log n)^{\alpha}}{n^d} \\
	c \cdot 2^{1/d}\cdot (\log n)^{-\beta}\cdot  n^{-1} &\mbox{if} \quad r\in \left[\frac{1}{2},\infty\right)
\end{cases}
\]	
with $c$ a positive constant and $\beta= 4d+9-12/d$. We now note that if $\pi(S_t)\leq 1/2$, then $Z_t = (\pi(S_t))^{-1/2}$. If $\pi(S_t) >1/2$, then $Z_t = \sqrt{\pi(S_t^c)}/\pi(S_t)\leq \sqrt{2}$. Since $\phi(r)=\phi(1/2)$ for all $r>1/2$, we get that in all cases
\[
\phi(\pi(S_{\tau_i})) = \phi\left(\frac{1}{Z_{\tau_i}^{2}}\right)
\]
and this concludes the proof.
\end{proof}

\begin{proof}[\bf Proof of Proposition~\ref{pro:90g}]

We define $Y_i=Z_{\tau_i}\1(\tau_\delta\wedge \tt>\tau_i)$ and 
\begin{align*}
	f(y)=\begin{cases}
		\left(\phi\left(\frac{1}{y^2}	\right)\right)^2 \quad&\mbox{ if } y>0 \\
		0 &\mbox{ if } y=0
	\end{cases},
\end{align*}
where $\phi$ is defined in Lemma~\ref{lem:supergoodtimes}.
With these definitions Lemma~\ref{lem:supergoodtimes} gives for all $1\leq i\leq N$
\[
\escondh{Y_{i+1}}{Y_i}{x,\eta} \leq Y_i(1-f(Y_i))
\]
with $Y_1 \leq  n^{d/2}$ for all $n\geq 3$.

Since $\phi$ is decreasing, we get that $f$ is increasing, and hence we can apply~\cite[Lemma~11 (iii)]{MorrisPeres} to deduce that for all $\epsilon>0$ if 
\[
k\geq \int_{\epsilon}^{n^{d/2}} \frac{1}{zf(z)} \, dz,
\]
then we have that 
\[
\estarth{Z_{\tau_k}\1(\tau_\delta\wedge \tt>\tau_k)}{x,\eta} \leq\epsilon.
\]
We now evaluate the integral
\begin{align*}
	\int_{\epsilon}^{n^{d/2}} \frac{1}{zf(z)} \, dz = \int_{\epsilon}^{n^{d/2}} \frac{1}{z(\phi(1/z^2))^2}\,dz = \frac{1}{2}\cdot\int_{\frac{1}{n^d}}^{\frac{1}{\epsilon^2}} \frac{1}{u(\phi(u))^2}\,du.
\end{align*}
Splitting the integral according to the different regions where~$\phi$ is defined and substituting the function we obtain
\begin{align*}
	\int_{\frac{1}{n^d}}^{\frac{1}{\epsilon^2}} \frac{1}{u(\phi(u))^2}\,du \leq c' \cdot n^2 \cdot (\log n)^{2\beta}\cdot \log \frac{1}{\epsilon},
\end{align*}
where $c'$ is a positive constant. Therefore, taking $\epsilon=\frac{1}{n^{10d}}$, this gives that for all $k\geq  c''\cdot n^2 (\log n)^{2\beta+1}$ with $c''=2c'd$ we have that 
\begin{align*}
\estarth{Z_{\tau_k}\1(\tau_\delta\wedge \tt>\tau_k)}{x,\eta} \leq \frac{1}{n^{10d}},
\end{align*}
and hence, since $N=(\log n)^\gamma\cdot n^2$ with $\gamma= 8d+20>2\beta+1$, we deduce
\begin{align}\label{eq:boundonz}
	\estarth{Z_{\tau_N}\1(\tau_\delta\wedge \tt>\tau_N)}{x,\eta} \leq \frac{1}{n^{10d}}.
\end{align}
Clearly we have
\begin{align}\label{eq:inclusion}
	\{\tau_\delta\wedge \tt>\tau_N\}= \{ \pi(S_{\tau_N})\geq 1/2, \tau_\delta\wedge \tt>\tau_N\} \cup \{ \pi(S_{\tau_N})<1/2, \tau_\delta\wedge \tt>\tau_N\}.
\end{align}
For the second event appearing on the right hand side above using the definition of the process $Z$ we get
\begin{align*}
	\{ \pi(S_{\tau_N})<1/2, \tau_\delta\wedge \tt>\tau_N\}\subseteq \{ Z_{\tau_N} \1(\tau_\delta\wedge \tt>\tau_N) > \sqrt{2}\}.
\end{align*}
The first event appearing on the right hand side of~\eqref{eq:inclusion} implies that $|S_{\tau_N}^c|\geq|S_{\tau_N}^c\cap \GG_{\tau_N}| \geq \delta |\GG_{\tau_N}|$. Since $\eta$ is a $\delta$-good environment, by (1) of Definition~\ref{def:goodenv} we have that $|\GG_{\tau_N}|\geq (1-\delta)\theta(p) n^d$. Therefore we obtain
\begin{align*}
\{ \pi(S_{\tau_N})\geq 1/2, \tau_\delta\wedge \tt>\tau_N\} \subseteq \left\{ Z_{\tau_N}\1(\tau_\delta\wedge \tt>\tau_N) \geq \sqrt{\delta(1-\delta)\theta(p)}\right\}.	
\end{align*}
By Markov's inequality and the two inclusions above we now conclude 
\begin{align*}
	\prstart{\tau_\delta\wedge \tt>\tau_N}{x,\eta} &\leq \prstart{Z_{\tau_N} \1(\tau_\delta\wedge \tt>\tau_N) > \sqrt{2}}{x,\eta} \\&\quad \quad + \prstart{Z_{\tau_N}\1(\tau_\delta\wedge \tt>\tau_N) \geq \sqrt{\delta(1-\delta)\theta(p)}}{x,\eta} \\
	&\leq \frac{\estart{Z_{\tau_N}\1(\tau_\delta\wedge \tt>\tau_N)}{x,\eta}}{\sqrt{2}} + \frac{\estart{Z_{\tau_N}\1(\tau_\delta\wedge \tt>\tau_N)}{x,\eta}}{\sqrt{\delta(1-\delta)\theta(p)}} \leq \frac{c}{n^{10d}},
\end{align*}
where $c$ is a positive constant and in the last inequality we used~\eqref{eq:boundonz}. 
Since $\eta$ is a $\delta$-good environment, this now implies that 
\[
\prstart{\tau_\delta\leq \tt}{x,\eta}\geq 1 - \frac{c}{n^{10d}}
\]
and this finishes the proof.
\end{proof}

\subsection{Proof of Theorem~\ref{thm:qmixlarge}}\label{sec:thetalarge}

In this section we prove Theorem~\ref{thm:qmixlarge}. First recall the definition of the stopping time $\tau_\delta$ as the first time $t$ that $|S_t\cap \GG_t|\geq (1-\delta)|\GG_t|$.

\begin{lemma}\label{lem:tau}
Let $p$ be such that $\theta(p)>1/2$. There exists $n_0$ and $\delta>0$ so that for all $n\geq n_0$, if~$\eta$ is a $\delta$-good environment, then for all $x$ 
	\[
	\| \prstart{X_{\tt} \in \cdot}{x,\eta} - \pi \|_{\rm{TV}} \leq  \frac{1 -\delta}{2}.
	\]
\end{lemma}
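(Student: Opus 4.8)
The plan is to combine Proposition~\ref{pro:90g} with the Diaconis--Fill coupling of Theorem~\ref{thm:coupling}. First I would fix $\delta>0$ small enough that $(1-\delta)^2\theta(p) > 1/2$; this is possible precisely because $\theta(p)>1/2$, and it is the only place the hypothesis $\theta(p)>1/2$ is used. Let $\eta$ be a $\delta$-good environment and let $x$ be any starting point. By Proposition~\ref{pro:90g}, with probability at least $1 - c/n^{10d}$ we have $\tau_\delta \le \tt$, i.e.\ there is an integer time $s\le \tt$ with $|S_s\cap \GG_s| \ge (1-\delta)|\GG_s|$. By (1) of Definition~\ref{def:goodenv}, on this event $|\GG_s|\ge (1-\delta)\theta(p)n^d$, so $|S_s| \ge |S_s\cap\GG_s| \ge (1-\delta)^2\theta(p)n^d > n^d/2$.

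Next I would run the Doob-transformed evolving set forward from time $\tau_\delta$ to time $\tt$ and argue its size does not collapse. The cleanest way: since $Z_k = \sqrt{\pi(S_k^\#)}/\pi(S_k)$ is a positive supermartingale under $\widehat{\mathbb P}$ (Lemma~\ref{lem:zprocess}), and once $\pi(S_k)>1/2$ we have $Z_k = \sqrt{\pi(S_k^c)}/\pi(S_k) \le \sqrt{2}\,\sqrt{\pi(S_k^c)}$, I can bound $\E{\pi(S_{\tt}^c)\mathbf 1(\pi(S_{\tt})>1/2)}$ in terms of $\E{Z_{\tau_\delta}^2}$ or, more simply, apply the optional stopping / supermartingale inequality to control $\pi(S^c_{\tt})$ on the event $\{\pi(S_k)>1/2 \text{ for all }k\in[\tau_\delta,\tt]\}$ and separately bound the probability that $\pi(S_k)$ drops back below $1/2$. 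Actually the slickest route is to observe that $\pi(S_k)$ itself is a submartingale under $\widehat{\mathbb P}$ (the standard evolving-set fact), so $\E{\pi(S_{\tt})} \ge \E{\pi(S_{\tau_\delta\wedge\tt})} \ge (1-\delta)^2\theta(p)(1 - c/n^{10d})$, and combined with a second-moment or maximal-inequality control one gets $\pi(S_{\tt}) \ge 1 - O(\delta)$ with probability at least, say, $1-(1-\delta)/4$. I would tune constants so that the final bound is the claimed $(1-\delta)/2$.

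Finally I would translate this back to the walk. By Theorem~\ref{thm:coupling}, on the coupling we have $\prcond{X_{\tt}=w}{S_0,\dots,S_{\tt}}{x} = \mathbf 1(w\in S_{\tt})\pi(w)/\pi(S_{\tt})$, so conditionally on $S_{\tt}$ the law of $X_{\tt}$ is $\pi$ restricted to and renormalised on $S_{\tt}$. A uniform distribution on a set $S$ with $\pi(S) = 1-\varepsilon$ is at total variation distance exactly $\varepsilon$ from $\pi$; hence $\tv{\prstart{X_{\tt}\in\cdot}{x,\eta}-\pi} \le \E{1-\pi(S_{\tt})}$ plus the small failure probabilities, which I have arranged to be at most $(1-\delta)/2$. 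The main obstacle is the middle step: Proposition~\ref{pro:90g} only tells us $S$ is large \emph{at the random time $\tau_\delta$}, not at the deterministic time $\tt$, so I must prevent the evolving set from shrinking between $\tau_\delta$ and $\tt$; I expect to handle this with the submartingale property of $\pi(S_k)$ together with the bound $\sqrt{\pi(S^c_k)} \ge Z_k/\sqrt2$ when $\pi(S_k)>1/2$, invoking the $Z$-supermartingale and a Markov/Doob maximal inequality, and possibly re-running Proposition~\ref{pro:90g}'s argument to show $\tau_\delta$ is in fact reached comfortably before $\tt$ with room to spare.
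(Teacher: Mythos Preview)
Your approach is correct but takes a different route from the paper. You propose to control the evolving set at the deterministic time $\tt$ by propagating from $\tau_\delta$ via the submartingale property of $\pi(S_k)$ under $\widehat{\mathbb P}$; the paper instead never looks at $S_{\tt}$ at all. It conditions on $\tau_\delta = s \le \tt$, observes that the conditional law $\nu_s$ of $X_s$ satisfies $\|\nu_s - \pi\|_{\rm TV} \le \widehat{\mathbb E}\bigl[1 - \pi(S_s)\,\big|\,\tau_\delta=s\bigr] \le \tfrac12 - \epsilon$ (this is exactly your final step, applied at the random time $s$ rather than at $\tt$), and then uses the elementary fact that $\pi$ is stationary for the time-inhomogeneous chain in a fixed environment, so running forward from $s$ to $\tt$ cannot increase the total variation distance to $\pi$. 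This sidesteps your ``main obstacle'' entirely: there is no need to prevent the evolving set from shrinking between $\tau_\delta$ and $\tt$, because the contraction happens at the level of the walk, not the evolving set.

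Your route also works, and the obstacle you flag is milder than you make it. Since $\pi(S_k)$ is a $\widehat{\mathbb P}$-submartingale and $\tau_\delta \wedge \tt \le \tt$ is bounded, optional stopping gives
\[
\widehat{\mathbb E}\bigl[\pi(S_{\tt})\bigr] \;\ge\; \widehat{\mathbb E}\bigl[\pi(S_{\tau_\delta \wedge \tt})\bigr] \;\ge\; (1-\delta)^2\theta(p)\Bigl(1 - \tfrac{c}{n^{10d}}\Bigr),
\]
and then your own final inequality $\|\mathbb P_{x,\eta}(X_{\tt}\in\cdot) - \pi\|_{\rm TV} \le \widehat{\mathbb E}\bigl[1 - \pi(S_{\tt})\bigr]$ already yields the claimed bound once $\delta$ is chosen so that $(1-\delta)^2\theta(p) > (1+\delta)/2$. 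No second-moment or maximal-inequality control is needed; the expectation bound alone suffices, so you can drop that part of the plan. The paper's argument is marginally cleaner in that beyond time $\tau_\delta$ it invokes only the stationarity of $\pi$ and nothing further about the Doob transform, while yours stays with the evolving set throughout; both reach the same conclusion with comparable effort.
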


\begin{proof}[\bf Proof]

Since $\theta(p)>1/2$, there exist $\epsilon> 2\delta>0$ so that 
\begin{align}\label{eq:thetadelta}	
\theta(p)>\frac{1}{2}+2\epsilon\quad  \text{ and } \quad  (1-\delta)^2 \theta(p)>\frac{1}{2}+\epsilon.
\end{align}
Summing over all possible values of $\tau=\tau_\delta$ we obtain
\begin{align}\label{eq:totalvx}
\begin{split}
	&\| \prstart{X_{\tt} \in \cdot}{x,\eta} - \pi \|_{\rm{TV}} = \frac{1}{2} \sum_{z} \left|\prstart{X_{\tt}=z}{x,\eta} - \frac{1}{n^d} \right| \\&\leq  \frac{1}{2} \sum_z \left|\sum_{s\leq \tt} \prstart{X_{\tt}=z, \tau=s}{x,\eta} -\sum_{s\leq \tt}\frac{\prstart{\tau=s}{x,\eta}}{n^d}  \right| +  \prstart{\tau>\tt}{x,\eta}.
\end{split}
\end{align}
By the strong Markov property at time $\tau$ we have 
\begin{align*}
	\prstart{X_{\tt}=z, \tau=s}{x,\eta} &=\sum_{y} \prstart{X_{\tt}=z, \tau=s, X_s =y}{x,\eta} \\
	&= \sum_{y} \prcond{X_{\tt}=z}{X_{s}=y}{x,\eta} \prstart{\tau=s, X_s=y}{x,\eta}.
\end{align*}
Since $\tau$ is a stopping time for the evolving set process, we can use the coupling of the walk and the Doob transform of the evolving set,~Theorem~\ref{thm:coupling}, to get
\begin{align*}
	\prcond{X_s=y}{\tau=s}{x,\eta} = \escond{\frac{\1(y\in S_s)}{|S_s|}}{\tau=s}{x,\eta}.
\end{align*}
For all $s\leq t(n)$ we call $\nu_s$ the probability measure defined by
\[
\nu_s(y) = \escond{\frac{\1(y\in S_s)}{|S_s|}}{\tau=s}{x,\eta}.
\]
We claim that 
\begin{align}\label{eq:claimtv}
\| \nu_s - \pi\|_{\rm{TV}} \leq \frac{1}{2}-\epsilon.
\end{align} 
Indeed, we have 
\begin{align*}
	\| \nu_s - \pi\|_{\rm{TV}} & =\frac{1}{2}\sum_z  \left|\escond{\frac{\1(z\in S_s)}{|S_s|} - \frac{1}{n^d}}{\tau=s}{x,\eta} \right| 
	\leq \escond{\frac{1}{2}\sum_z\left|\frac{\1(z\in S_s)}{|S_s|}
	 - \frac{1}{n^d}\right|}{\tau=s}{x,\eta} \\
	 &= \frac{1}{2}\cdot \escond{1 - \frac{|S_s|}{n^d} + \frac{n^d - |S_s|}{n^d}}{\tau=s}{x,\eta}
	 = \escond{1 - \frac{|S_s|}{n^d}}{\tau=s}{x,\eta}.
\end{align*}
Since $s\leq t(n)$ and $\eta$ is a $\delta$-good environment, we have $|\GG_s|\geq (1-\delta)\theta(p) n^d$, and hence on the event~$\{\tau=s\}$ we get 
\[
|S_s|\geq (1-\delta)^2 \theta(p)n^d > \left(\frac{1}{2} + \epsilon\right) n^d.
\]
This now implies that 
\begin{align*}
	\escond{1 - \frac{|S_s|}{n^d}}{\tau=s}{x,\eta}\leq \frac{1}{2} - \epsilon
\end{align*}
and completes the proof of~\eqref{eq:claimtv}. By the definition of $\nu_s$ we have 
\begin{align*}
&\frac{1}{2} \sum_z \left|\sum_{s\leq \tt} \prstart{X_{\tt}=z, \tau=s}{x,\eta} -\sum_{s\leq \tt}\frac{\prstart{\tau=s}{x,\eta}}{n^d}  \right| \\
	&=\frac{1}{2}\sum_z\left| \sum_{s\leq {\tt}} \sum_y\prcond{X_{\tt}=z}{X_s=y}{x,\eta}  \nu_s(y)\prstart{\tau=s}{x,\eta}-
	\sum_{s\leq {\tt}} \frac{\prstart{\tau=s}{x,\eta}}{n^d}
	\right| \\
	&\leq  \sum_{s\leq {\tt}} \prstart{\tau=s}{x,\eta}\frac{1}{2}\sum_z\left| \sum_y\nu_s(y) \prcond{X_{\tt}=z}{X_s=y}{x,\eta}   - \frac{1}{n^d}
	\right|. 
\end{align*}
But since $\pi$ is stationary for $X$ when the environment is $\eta$, we obtain
\[
\frac{1}{2}\sum_z\left| \sum_y\nu_s(y) \prcond{X_{\tt}=z}{X_s=y}{x,\eta}   - \frac{1}{n^d}
	\right|\leq \|\nu_s - \pi\|_{\rm{TV}}\leq \frac{1}{2}-\epsilon,
\]
where the last inequality follows from~\eqref{eq:claimtv}.
Substituting this bound into~\eqref{eq:totalvx} gives
\begin{align*}
	\| \prstart{X_{\tt} \in \cdot}{x,\eta} - \pi \|_{\rm{TV}} \leq  \frac{1}{2}-\epsilon + \prstart{\tau>\tt}{x,\eta}.
\end{align*}
From Proposition~\ref{pro:90g}
we have 
\begin{align*}
	\prstart{\tau\leq \tt}{x,\eta} \geq  1-\frac{c}{n^{2d}}.
\end{align*}
This together with the fact that we took $2\delta<\epsilon$ finishes the proof.
\end{proof}

\begin{corollary}\label{cor:mixhigh}
Let $p$ be such that $\theta(p)>1/2$. Then there exist $\delta\in(0,1)$ and $n_0$ such that for all $n\geq n_0$ and all starting environments $\eta_0$ we have 
\begin{align*}
	 \cps{(\eta_t)_{t\leq \tt}: \,\forall x,y,\, \tv{P_\eta^{\tt}(x,\cdot) - P_\eta^{\tt}(y,\cdot) } \leq 1-\delta} \geq 1-\delta.
\end{align*}	
\end{corollary}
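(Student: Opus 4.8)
The plan is to obtain Corollary~\ref{cor:mixhigh} as an immediate consequence of Lemma~\ref{lem:tau} and Lemma~\ref{lem:good}, combined by a triangle inequality. First I would fix $\delta\in(0,1)$ and $n_0$ to be the constants produced by Lemma~\ref{lem:tau} (which uses the hypothesis $\theta(p)>1/2$), and then invoke Lemma~\ref{lem:good} \emph{for this same value of $\delta$}; the latter supplies constants $c_1,c_2,c_3$ and a (possibly larger) threshold $n_0$ so that $\prstart{\eta\text{ is }\delta\text{-good}}{\eta_0}\geq 1-c_3/n^{10d}$ for all $n\ge n_0$ and all $\eta_0$.

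Next I would note that the quenched kernel $P_\eta^{\tt}(x,\cdot)=\prstart{X_{\tt}\in\cdot}{x,\eta}$ depends only on $(\eta_t)_{t\leq\tt}$, so the event in the statement is genuinely an event measurable with respect to the truncated environment. On the event that $\eta$ is $\delta$-good, Lemma~\ref{lem:tau} yields $\tv{P_\eta^{\tt}(x,\cdot)-\pi}\leq (1-\delta)/2$ for every $x$, and hence, by the triangle inequality for total variation, for all $x,y$
\[
\tv{P_\eta^{\tt}(x,\cdot)-P_\eta^{\tt}(y,\cdot)}\leq \tv{P_\eta^{\tt}(x,\cdot)-\pi}+\tv{\pi-P_\eta^{\tt}(y,\cdot)}\leq 1-\delta.
\]
Thus the event $\{\eta\text{ is }\delta\text{-good}\}$ is contained in the event $\{\forall x,y:\ \tv{P_\eta^{\tt}(x,\cdot)-P_\eta^{\tt}(y,\cdot)}\leq 1-\delta\}$, so the latter has $\mathcal{P}_{\eta_0}$-probability at least $1-c_3/n^{10d}$. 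Enlarging $n_0$ once more so that $c_3/n_0^{10d}\le\delta$ gives the claimed bound $\geq 1-\delta$.

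There is essentially no real obstacle here — the two cited lemmas do all the work. The only point requiring a little care is that the single parameter $\delta$ must simultaneously control three things: the meaning of ``$\delta$-good'', the $1-\delta$ bound on the total variation distance between the two laws, and the $1-\delta$ lower bound on the probability. This is possible precisely because $\theta(p)>1/2$ leaves a strictly positive margin, which is exactly what is exploited in the proof of Lemma~\ref{lem:tau} when choosing $\epsilon>2\delta$ with $(1-\delta)^2\theta(p)>\tfrac12+\epsilon$.
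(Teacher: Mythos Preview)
Your proposal is correct and follows essentially the same route as the paper: take the $\delta$ from Lemma~\ref{lem:tau}, apply the triangle inequality on the event $\{\eta\text{ is }\delta\text{-good}\}$ to pass from the bounds $\tv{P_\eta^{\tt}(x,\cdot)-\pi}\le(1-\delta)/2$ to $\tv{P_\eta^{\tt}(x,\cdot)-P_\eta^{\tt}(y,\cdot)}\le 1-\delta$, and then use Lemma~\ref{lem:good} (enlarging $n_0$ if needed) to make the probability of the complement at most $\delta$. Your extra remark that the kernel $P_\eta^{\tt}(x,\cdot)$ depends only on $(\eta_t)_{t\le\tt}$ is a harmless clarification the paper leaves implicit.
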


\begin{proof}[\bf Proof]

Let $\delta$ and $n_0$ be as in the statement of Lemma~\ref{lem:tau}. Then Lemma~\ref{lem:tau} gives that for all $n\geq n_0$, if $\eta$ is a~$\delta$-good environment, then for all $x$ and $y$ we have 
\begin{align*}
	\tv{P_\eta^{\tt}(x,\cdot) - \pi}\leq \frac{1-\delta}{2} \quad \text{ and } \quad	\tv{P_\eta^{\tt}(y,\cdot) - \pi}\leq \frac{1-\delta}{2}. 
\end{align*}
Using this and the triangle inequality we obtain that on the event that $\eta$ is a $\delta$-good environment for all $x$ and $y$
\begin{align*}
	\tv{P_\eta^{\tt}(x,\cdot) -P_\eta^{\tt}(y,\cdot) } \leq 1-\delta.
\end{align*}
Therefore for all $n\geq n_0$ we get for all $\eta_0$
\begin{align*}
	\cps{(\eta_t)_{t\leq \tt}: \,\exists \,x,y,\, \tv{P_\eta^{\tt}(x,\cdot) - P_\eta^{\tt}(y,\cdot) } >1-\delta } \leq \cps{\eta \text{ is not a $\delta$-good environment}}.
\end{align*}
Taking $n_0$ even larger we get from Lemma~\ref{lem:good} that for all $n\geq n_0$
\[
\cps{\eta \text{ is not a $\delta$-good environment}} \leq \delta
\]
and this concludes the proof.
\end{proof}

The following lemma will be applied later in the case where $R$ is a constant or a uniform random variable.

\begin{lemma}\label{lem:timetv}
	Let $R$ be a random time independent of $X$ and such that the following holds: there exists $\delta\in (0,1)$ such that for all starting environments $\eta_0$ we have
	\[
	\cps{\eta: \,\forall x,y,\, \tv{\prstart{X_R=\cdot}{x,\eta} - \prstart{X_R=\cdot}{y,\eta} } \leq 1-\delta} \geq 1-\delta.
	\]
	Then there exists a positive constant $c=c(\delta)$ and $n_0=n_0(\delta)\in \N$ so that if $k=c\log n$ and  $R(k)=R_1+\ldots +R_k$, where $R_i$ are i.i.d.\ distributed as $R$, then for all $n\geq n_0$, all $x,y$ and $\eta_0$ 
	\[
	\cps{\eta:\tv{\prstart{X_{R(k)}=\cdot}{x,\eta} - \prstart{X_{R(k)}=\cdot}{y,\eta} } \leq \frac{1}{n^{3d}}} \geq 1-\frac{1}{n^{3d}}. 
	\]
\end{lemma}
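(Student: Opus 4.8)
The plan is to iterate the one-step contraction hypothesis $k = c\log n$ times, using independence of the $R_i$'s to multiply the (bad-event) failure probabilities and the (good-event) contraction factors. First I would fix $\eta_0$ and think of the environment $\eta = (\eta_t)_{t\ge 0}$ as being revealed in blocks: let $\eta^{(1)}$ be the environment on $[0,R_1)$, $\eta^{(2)}$ the environment on $[R_1, R_1+R_2)$ shifted back to start at $0$, and so on. By the Markov property of the environment process and the definition of $\mathcal P_{\eta_0}$, conditionally on the environment state $\eta_{R_1}$ at time $R_1$, the second block $\eta^{(2)}$ is distributed as $\mathcal P_{\eta_{R_1}}$, and so on; crucially the hypothesis is uniform over all starting environments $\eta_0$, so each block independently has probability at least $1-\delta$ of being ``contracting'' in the sense of the displayed hypothesis. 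Call block $j$ \emph{good} if the environment on that block (with its induced starting configuration) satisfies $\tv{\prstart{X_{R_j}=\cdot}{x,\cdot}-\prstart{X_{R_j}=\cdot}{y,\cdot}}\le 1-\delta$ for all $x,y$.

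Next I would run the walk through the blocks: writing $P^{(j)}_\eta$ for the (random, $\eta$-measurable) transition kernel of $X$ over block $j$, we have $\prstart{X_{R(k)}=\cdot}{x,\eta} = \bigl(P^{(1)}_\eta P^{(2)}_\eta \cdots P^{(k)}_\eta\bigr)(x,\cdot)$. The standard submultiplicativity of total-variation coupling distance under composition of Markov kernels gives, for any two starting points $x,y$,
\[
\tv{\prstart{X_{R(k)}=\cdot}{x,\eta} - \prstart{X_{R(k)}=\cdot}{y,\eta}} \le \prod_{j=1}^{k} \Bigl(\max_{u,v}\tv{P^{(j)}_\eta(u,\cdot)-P^{(j)}_\eta(v,\cdot)}\Bigr).
\]
On every good block the corresponding factor is at most $1-\delta$, and on every block the factor is trivially at most $1$. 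Let $G$ be the number of good blocks among the $k$; then the product is at most $(1-\delta)^{G}$. Since the $k$ blocks are independent and each is good with probability $\ge 1-\delta$ (uniformly in the past, by the uniformity of the hypothesis over $\eta_0$), $G$ stochastically dominates $\mathrm{Bin}(k,1-\delta)$, so by a standard Chernoff bound $\pr{G < \tfrac{1-\delta}{2}k}\le e^{-ck}$ for a constant $c=c(\delta)>0$. On the complementary event the product is at most $(1-\delta)^{(1-\delta)k/2}$. Choosing $k = c'\log n$ with $c'$ large enough (depending only on $\delta$) makes both $e^{-ck}$ and $(1-\delta)^{(1-\delta)k/2}$ smaller than $n^{-3d}$, which yields the claim; one should note we only care about $s \le t(n)$-type ranges implicitly, but since $R_i\eqdist R$ and $R(k)$ only involves finitely many blocks, no issue arises with the environment being defined on all of $[0,\infty)$.

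The one point requiring a little care — and the main (mild) obstacle — is making the block decomposition of the environment rigorous together with the independence of the ``good block'' events: one must check that conditioning on the history $\mathcal F_{R_1+\dots+R_{j-1}}$ (environment and walk), the event that block $j$ is good has probability $\ge 1-\delta$, and this is exactly where the uniformity over \emph{all} starting environments $\eta_0$ in the hypothesis is used, combined with the Markov property of the dynamical percolation environment and the independence of $R_j$ from everything. Once that is set up, the submultiplicativity of the TV contraction coefficient and the Chernoff bound are entirely routine, and the logarithmic choice of $k$ follows by matching exponents.
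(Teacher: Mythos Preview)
Your approach is essentially the same as the paper's: decompose into $k$ blocks, call a block good if its transition kernel contracts by a factor $1-\delta$ uniformly over starting points, use the uniformity of the hypothesis in $\eta_0$ together with the Markov property of the environment to get that each block is good with conditional probability $\ge 1-\delta$, and apply a Chernoff bound to the number $G$ of good blocks. The paper phrases the contraction step via an explicit optimal coupling of two walks $X,Y$ rather than the Dobrushin submultiplicativity inequality you invoke, but these are equivalent formulations.

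There is one genuine difference in the final step, and it matters slightly. The paper first bounds the \emph{annealed} coupling probability $\prstart{X_{R(k)}\ne Y_{R(k)}}{x_0,y_0,\eta_0}\le n^{-6d}$ and then applies Markov's inequality over $\eta$ to obtain the quenched conclusion. You instead conclude directly that on the $\eta$-event $\{G\ge (1-\delta)k/2\}$ the total variation is at most $(1-\delta)^{(1-\delta)k/2}$. Your direct route is cleaner when $R$ is deterministic (which is in fact the only application the paper makes), but for genuinely random $R$ your factorisation $\prstart{X_{R(k)}=\cdot}{x,\eta}=(P^{(1)}_\eta\cdots P^{(k)}_\eta)(x,\cdot)$ into ``$\eta$-measurable'' block kernels breaks down: the $j$-th block kernel depends on the random shift $R_1+\cdots+R_{j-1}$, so neither the kernels nor the good-block indicators are functions of $\eta$ alone, and the event $\{G\ge(1-\delta)k/2\}$ is not a $\mathcal P_{\eta_0}$-event. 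The paper's annealed-then-Markov step sidesteps this issue at the cost of needing the stronger bound $n^{-6d}$ before Markov.
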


\begin{proof}[\bf Proof]

We fix $x_0, y_0$ and let $X, Y$ be two walks moving in the same environment~$\eta$ and started from $x_0$ and $y_0$ respectively. We now present a coupling of $X$ and $Y$. We divide time into rounds of length $R_1, R_2,\ldots$ and we describe the coupling for every round.

For the first round, i.e.\ for times between $0$ and $R_1$ we use the optimal coupling given by
\[
\prstart{X_{R_1}\neq Y_{R_1}}{x_0,y_0,\eta} = \|\prstart{X_{R_1}=\cdot}{x_0,\eta}
- \prstart{Y_{R_1}=\cdot }{y_0,\eta}\|_{\rm{TV}},
\]
where the environment $\eta$ is restricted between time $0$ and $R_1$. We now change the definition of a good environment. We call $\eta$ a good environment during $[0,R_1]$ if the total variation distance appearing above is smaller than $1-\delta$.

If $X$ and $Y$ did not couple after $R_1$ steps, then they have reached some locations $X_{R_1}=x_1$ and $Y_{R_1} = y_1$. In the second round we couple them using again the corresponding optimal coupling, i.e.
\[
\prstart{X_{R_2}\neq Y_{R_2}}{x_1,y_1,\eta} = \|\prstart{X_{R_2}=\cdot}{x_1,\eta}
- \prstart{Y_{R_2}=\cdot }{y_1,\eta}\|_{\rm{TV}}.
\]
Similarly we call $\eta$ a good environment for the second round if the total variation distance above is smaller than $1-\delta$. We continue in the same way for all later rounds. By the assumption on $R$, i.e.\ the bound on the probability given in the statement of the lemma is uniform over all starting points $x$ and $y$ and the initial environment, we get that for all~$\eta_0$
\[
\cps{\eta \text{ is good for the $i$-th round}}  \geq 1-\delta
\]
and the same bound is true even after conditioning on the previous $i-1$ rounds.
Let $k=c\log n$ for a constant $c$ to be determined. Let $E$ denote the number of good environments in the first $k$ rounds. We now get
\begin{align*}
	\prstart{X_{R(k)}\neq Y_{R(k)}}{x_0,y_0, \eta_0} \leq \prstart{E\leq \frac{(1-\delta)k}{2}}{x_0,y_0, \eta_0}+ \prstart{E > \frac{(1-\delta)k}{2}, X_{R(k)}\neq Y_{R(k)}}{x_0,y_0, \eta_0}.
\end{align*}
By concentration, since we can stochastically dominate $E$ from below by ${\rm{Bin}}(k,1-\delta)$, the first probability decays exponentially in $k$. For the second probability, on the event that there are enough good environments, since the probability of not coupling in each round is at most $1-\delta$, by successive conditioning we get
\[
\prstart{E > \frac{(1-\delta)k}{2}, X_{R(k)}\neq Y_{R(k)}}{x_0,y_0, \eta_0} \leq (1-\delta)^{(1-\delta)k/2}.
\]
Therefore, taking $c=c(\delta)$ sufficiently large we get overall for all $n$ sufficiently large
\[
\prstart{X_{R(k)}\neq Y_{R(k)}}{x_0,y_0, \eta_0} \leq \frac{1}{n^{6d}}.
\]
So by Markov's inequality again we obtain for all $n$ sufficiently large
\begin{align*}
	&\cps{\eta: \|\prstart{X_{R(k)}=\cdot}{x_0,\eta}
- \prstart{Y_{R(k)}=\cdot }{y_0,\eta}\|_{\rm{TV}} > \frac{1}{n^{3d}}} \\
&\leq
     n^{3d}\cdot  \mathcal{E}_{\eta_0}\left[\|\prstart{X_{R(k)}=\cdot}{x_0,\eta}
- \prstart{Y_{R(k)}=\cdot }{y_0,\eta}\|_{\rm{TV}}\right]\\&
     \leq n^{3d}\cdot \prstart{X_{R(k)}\neq Y_{R(k)}}{x_0,y_0,\eta_0}\leq 
	\frac{1}{n^{3d}},
\end{align*}
where $\mathcal{E}$ is expectation over the random environment. This finishes the proof.
\end{proof}

\begin{proof}[\bf Proof of Theorem~\ref{thm:qmixlarge}]

Let $R=\tt$. Then by Corollary~\ref{cor:mixhigh} there exists $n_0$ such that $R$ satisfies the condition of Lemma~\ref{lem:timetv} for $n\geq n_0$. So applying Lemma~\ref{lem:timetv} we get for all $n$ sufficiently large and
 all $x_0,y_0$ and $\eta_0$ 
 \[
\cps{\eta: \|\prstart{X_{k\tt}=\cdot}{x_0,\eta}
- \prstart{Y_{k\tt}=\cdot }{y_0,\eta}\|_{\rm{TV}} > \frac{1}{n^{3d}}} \leq \frac{1}{n^{3d}},
\]
where $k=c\log n$.
By a union bound over all starting states $x_0,y_0$ we deduce
\begin{align*}
	\cps{\eta: \max_{x_0,y_0}\|P_\eta^{k\tt}(x_0,\cdot) - P_\eta^{k\tt}(y_0,\cdot) \|_{\rm{TV}} > \frac{1}{n^{3d}}} \leq n^{2d} \cdot \frac{1}{n^{3d}} = \frac{1}{n^d}.
\end{align*}
This proves that for all $n$ sufficiently large
\[
\cps{\eta:  \tmixx{n^{-3d}}{\eta} \geq k\tt } \leq n^{-d}
\]
and thus completes the proof of the theorem.
\end{proof}

\subsection{Proof of Theorem~\ref{thm:qmixcesaro}}\label{sec:thetasmall}

\begin{proof}[\bf Proof of Theorem~\ref{thm:qmixcesaro}]
Let $\delta=\epsilon/100$ and $k=[2(1-\delta)/(\delta \theta(p))]+1$.
For every starting point $x_0$ we are going to define a sequence of stopping times. First let~$\xi_1$ be the first time that all the edges refresh at least once. Let $\til{\delta}=\delta/k$.
Then we define $\tau_1=\tau_1(x_0)$ 
\[
\tau_1 = \inf\left\{t\geq \xi_1: |S_t\cap \GG_t| \geq (1-\til{\delta})|\GG_t|\right\}\wedge (\xi_1+\tt),
\]
where $(S_t)$ is the evolving set process starting at time $\xi_1$ from $\{X_{\xi_1}\}$ and coupled with $X$ using the Diaconis Fill coupling.
We define inductively, $\xi_{i+1}$ as the first time after $\xi_i+\tt$ that all edges refresh at least once. In order to now define $\tau_{i+1}$, we start a new evolving set process which at time $\xi_{i+1}$ is the singleton $\{X_{\xi_{i+1}}\}$. (This new restart does not affect the definition of the earlier $\tau_j$'s.) To simplify notation, we call this process  again $S_t$ and we couple it with the walk $X$ using the Diaconis Fill coupling. Next we define
\[
\tau_{i+1}=\inf\left\{t\geq \xi_{i
+1}: |S_t\cap \GG_t| \geq (1-\til{\delta})|\GG_t|\right\}\wedge (\xi_{i+1}+\tt).
\]
From now on we call $\eta$ a \emph{good} environment if $\eta$ is a $\delta$-good environment and $\xi_k\leq 2k\tt$. Lemma~\ref{lem:good} and the definition of the $\xi_i$'s give for all $\eta_0$
\begin{align}\label{eq:notlemma}
	\mathcal{P}_{\eta_0}(\eta\text{ is good})  \geq 1-\frac{c_4}{n^{10d}},
\end{align}
where $c_4$ is a positive constant.
By Proposition~\ref{pro:90g} there exists a positive constant $c$ so that if $\eta$ is a good environment, then for all $x_0$ and for all $1\leq i\leq k$ we have

\begin{align}\label{eq:important}
	\prstart{\tau_{i}-\xi_{i}\leq \tt}{x_0,\eta} \geq 1 -\frac{c}{n^{10d}}.
\end{align}
We  will now prove that there exists a positive constant $c'$ so that for all $x_0$
\begin{align}\label{eq:claimclusters}
	\prstart{|\GG_{\tau_1}\cup\ldots\cup\GG_{\tau_k}|<(1-\delta)n^d}{x_0, \eta_0}\leq \frac{c'}{n^{2d}}.
\end{align}
Writing again $\mathcal{E}$ for expectation over the random environment and using~\eqref{eq:notlemma} and~\eqref{eq:important} we obtain for all $i\leq k$ that there exists a positive constant $c''$ so that for all $n$ sufficiently large and for all $x_0, \eta_0$
\begin{align*}
	\prstart{\tau_{i}-\xi_{i}\leq \tt}{x_0, \eta_0} \geq \mathcal{E}_{\eta_0}\left[\prstart{\tau_{i}-\xi_{i}\leq \tt}{x_0,\eta}\1(\eta\text{ is good})\right] \geq 1-\frac{c''}{n^{10d}}.
	\end{align*}
	This and Markov's inequality now give that for all $n$ sufficiently large
	\begin{align}\label{eq:taudtaui}
	\cps{\eta: \forall \,x_0, \, \prstart{\tau_k\leq (\log n)\tt}{x_0,\eta} \geq 1-\frac{1}{n^{{2d}}}}\geq 1-\frac{c''}{n}.
\end{align}
Since every edge refreshes after an exponential time of parameter $\mu$, it follows that the number of different percolation clusters that appear in an interval of length $t$ is stochastically bounded by a Poisson random variable of parameter $\mu\cdot t\cdot dn^d$. Therefore, the number of possible percolation configurations in the interval $[\xi_i, \xi_i+\tt]$ is dominated by a Poisson variable $N_i$ of parameter $\mu\cdot \tt\cdot dn^d$. By the concentration of the Poisson distribution, we obtain\begin{align*}
	\pr{\exists \, i\leq k: N_i\geq n^{d+4}} \leq \exp\left(-c_1n \right),
\end{align*} 
where $c_1$ is another positive constant. Let $\GG^1, \ldots, \GG^k$ be the giant components of independent supercritical percolation configurations. Since the percolation clusters obtained at the times $\xi_i$ are independent, using Corollary~\ref{cor:manyperc} in the third inequality below we deduce that for all $n$ sufficiently large
\begin{align*}
	&\prstart{|\GG_{\tau_1}\cup\ldots\cup\GG_{\tau_k}|<(1-\delta)n^d}{x_0, \eta_0} \\
	&\leq \prstart{|\GG_{\tau_1}\cup\ldots\cup\GG_{\tau_k}|<(1-\delta)n^d, \{\tau_{i}-\xi_{i}\leq \tt\} \cap \{N_i\leq n^{d+4}\}, \forall \, i\leq k}{x_0,\eta_0} + e^{-c_1n} + k\frac{c}{n^{10d}}\\&\leq 	
	n^{(d+4)k} \pr{|\GG^1\cup\ldots \cup \GG^k|<(1-\delta)n^{d}} +  k \frac{2c}{n^{10d}}\leq  \frac{n^{(d+4)k}}{c}\exp\left(-cn^{\frac{d}{d+1}} \right)  + k\frac{2c}{n^{10d}}\leq \frac{c''}{n^{10d}},
\end{align*}
where $c''$ is a positive constant uniform for all $x_0$ and $\eta_0$. 
This proves~\eqref{eq:claimclusters}. So we can sum this error over all starting points $x_0$ and get using Markov's inequality that for all $n$ sufficiently large and all~$\eta_0$
\begin{align}\label{eq:clind}
	\cps{\eta: \forall x_0, \,\prstart{|\GG_{\tau_1}\cup\ldots\cup\GG_{\tau_k}|\geq (1-\delta)n^d}{x_0,\eta}\geq 1-\frac{1}{n}}\geq 1-\frac{c'}{n}.
\end{align}
The definition of the stopping times $\tau_i$ immediately yields
\[
\{|\GG_{\tau_1}\cup\ldots\cup \GG_{\tau_k}| \geq (1-\delta)n^d\} \subseteq \{|S_{\tau_1}\cup \ldots \cup S_{\tau_k}|\geq (1-\delta)^2n^d\}.
\]
This together with~\eqref{eq:clind} now give
\begin{align}\label{eq:onestar}
	\cps{\eta: \forall x_0, \,\prstart{|S_{\tau_1}\cup\ldots\cup S_{\tau_k}|\geq (1-\delta)^2 n^d}{x_0,\eta}\geq 1-\frac{1}{n}}\geq 1-\frac{c'}{n}.
\end{align} 
Remember the dependence on $x_0$ of the stopping times $\tau_i$ that we have suppressed. We now change the definition of a good environment and call $\eta$ \emph{good} if it satisfies the following for all $x_0$
\begin{align}\label{eq:etagood}
\prstart{|S_{\tau_1}\cup\ldots\cup S_{\tau_k}|\geq (1-\delta)^2 n^d}{x_0,\eta}\geq 1-\frac{1}{n}
\quad\text{ and } \quad  \prstart{\tau_k\leq (\log n)\tt}{x_0,\eta} \geq 1-\frac{1}{n^{2d}}
\end{align}
From~\eqref{eq:taudtaui} and~\eqref{eq:onestar} we get that for all $\eta_0$
\begin{align}\label{eq:twostars}
\cps{\eta\text{ is good}} \geq 1-\frac{c'+c''}{n}.
\end{align}
We now define a stopping time $\tau(x_0)$ by selecting $i\in \{1,\ldots, k\}$ uniformly at random and setting $\tau(x_0)=\tau_i(x_0)$. Then at this time we have 
\begin{align*}
	\prstart{X_{\tau(x_0)} = x}{x_0,\eta} = \sum_{i=1}^{k}\frac{1}{k}\prstart{X_{\tau_i}=x}{x_0,\eta} = \frac{1}{k}\sum_{i=1}^{k} \estart{\frac{\1(x\in S_{\tau_i})}{|S_{\tau_i}|}}{x_0,\eta} \geq \frac{1}{kn^d}\prstart{x\in S_{\tau_1}\cup\ldots \cup S_{\tau_k}}{x_0,\eta}.
\end{align*}
We now set $f_1(x) = \prstart{x\in S_{\tau_1}\cup\ldots \cup S_{\tau_k}}{x_0,\eta}$ for all $x$. Since $\eta$ is a good environment, then for some $\delta'<\epsilon/50$ we have for all $n$ sufficiently large
\begin{align}\label{eq:f1sum}
\sum_x f_1(x) = \estart{|S_{\tau_1}\cup\ldots\cup S_{\tau_k}|}{x_0,\eta} \geq (1-\delta)^2n^d \left(1-\frac{1}{n}\right)= (1-\delta') n^d.
\end{align}
First let $c=c(\epsilon)\in \N$ be a constant to be fixed later. 
In order to define the stopping rule, we first repeat the above construction $ck$ times. More specifically, when $X_0=x_0$, we let $\sigma_1= \tau(x_0)\wedge (\log n)\tt$. Then, since $\eta$ is a good environment, we obtain
\[
\prstart{X_{\sigma_1}=x}{x_0,\eta} \geq \frac{1}{kn^d} f_1(x) - \frac{1}{n^{2d}}.
\]

 Let $X_{\sigma_1} = x_1$. Then we define in the same way as above a stopping time $\tau(x_1)$ with the evolving set process starting from $\{x_1\}$ and the environment considered after time $\sigma_1$. Then we set $$\sigma_2 = \sigma_1+(\tau(x_1)-\sigma_1)\wedge (\log n)\tt.$$ We continue in this way and define a sequence of stopping times $\sigma_i$ for all $i< ck$. 
In the same way as for the first round for all $i< ck$ we have 
\[
\prstart{X_{\sigma_i}=x}{x_0,\eta} \geq \frac{1}{kn^d} f_i(x) - \frac{1}{n^{2d}}
\]
and the function $f_i$ satisfies~\eqref{eq:f1sum}.

We next define the stopping rule. To do so we will explain what is the probability of stopping in every round. We define the set $A_1$ of good points for the first round as follows: 
\[
A_1 = \left\{ x: \, \prstart{X_{\sigma_1}=x}{x_0, \eta} \geq \frac{1}{2kn^d}\right\}.
\]
We now sample $X$ at time $\sigma_1$. If $X_{\sigma_1}=x\in A_1$, then at this time we stop with probability 
\[
\frac{1}{2kn^d \prstart{X_{\sigma_1}=x}{x_0,\eta}}.
\]
If we stop after the first round, then we set $T=\sigma_1$.
So if $x\in A_1$, we have 
\begin{align*}
	\prstart{X_{T}=x, T=\sigma_1}{x_0,\eta} = \frac{1}{2kn^d}.
\end{align*}
From~\eqref{eq:f1sum} we get that $|A_1|\geq (1-3\delta')n^d$ for all $n$ sufficiently large. Therefore, summing over all $x\in A_1$ we get that 
\[
\prstart{T=\sigma_1}{x_0,\eta} \geq \frac{1-3\delta'}{2k}.
\]
Therefore, this now gives for $x\in A_1$
\begin{align*}
	\prcond{X_T=x}{T=\sigma_1}{x_0,\eta} \leq \frac{1}{(1-3\delta')n^d}.
\end{align*}
We now define inductively the probability of stopping in the $i$-th round. Suppose we have not stopped up to the $i-1$-st round. We define the set of good points for the $i$-th round via
\[
A_i=\left\{x: \, \prstart{X_{\sigma_i}=x}{x_0,\eta}\geq \frac{1}{2kn^d}   \right\}
\]
If $X_{\sigma_i}=x\in A_i$, then the probability we stop at the $i$-th round is 
\[
\frac{1}{2kn^d \prstart{X_{\sigma_i}=x}{x_0,\eta}}
\]
and as above we obtain by summing over all $x\in A_i$ and using that $|A_i|\geq (1-3\delta')n^d$
\[
\prcond{T=\sigma_i}{T>\sigma_{i-1}}{x_0,\eta} \geq \frac{1-3\delta'}{2k} \quad\text{ and }\quad  \prcond{X_T=x}{T=\sigma_i}{x_0,\eta} \leq \frac{1}{(1-3\delta')n^d}, \,\, \forall \, x\in A_i.
\]
If we have not stopped before the $ck$-th round, then we set $T=\sigma_{ck+1}$.
Notice, however, that 
\[
\prstart{T=\sigma_{ck+1}}{x_0,\eta} \leq \left(1-\frac{1-3\delta'}{2k} \right)^{ck}\leq  \exp\left(-c(1-3\delta') \right).
\]
For every round $i\leq ck$, we now have that 
\begin{align*}
\norm{\prcond{X_T=\cdot}{T=\sigma_i}{x_0,\eta} - \pi}
_{\rm{TV}} &=\sum_{x\in A_i} \left(\prcond{X_T=x}{T=\sigma_i}{x_0,\eta} - \frac{1}{n^d} \right)_+ + \frac{|A_i^c|}{n^d} \\
&\leq \sum_{x\in A_i}\left( \frac{1}{(1-3\delta')n^d} - \frac{1}{n^d} \right) + 3\delta' \leq \frac{3\delta'}{1-3\delta'}+3\delta' \leq 10\delta',
\end{align*}
since $\epsilon <1/4$.
So we now get overall 
\begin{align*}
	\norm{\prstart{X_T=\cdot}{x_0,\eta}- \pi}_{\rm{TV}} &\leq \sum_{i\leq ck} \prstart{T=\sigma_i}{x_0,\eta} \norm{\prcond{X_{T}=\cdot}{T=\sigma_i}{x_0,\eta}-\pi}_{\rm{TV}} + \prstart{T=\sigma_{ck+1}}{x_0,\eta}\\
	&\leq 10\delta' + \exp\left(-c(1-3\delta') \right).
\end{align*}
We now take $c=c(\epsilon)$ so that the above bound is smaller than $\epsilon$.
Finally, by the definition of the stopping times $\sigma_i$, we also get that $\estart{T}{x_0,\eta}\leq c k (\log n)\tt$ and this concludes the proof.
\end{proof}

\begin{proof}[\bf Proof of Corollary~\ref{cor:hittingtimes}]
Let $n=10r$. It suffices to prove the statement of the corollary for $X$ being a random walk on dynamical percolation on $\Z_n^d$.
	From Theorem~\ref{thm:qmixcesaro} there exists $a$ so that for all $n$ large enough and all $x$ and $\eta_0$
	\[
	\prstart{\exists \, t\leq \left(n^2+\frac{1}{\mu} \right)(\log n)^a: \, \norm{X_t}\geq r}{x,\eta_0}\geq \frac{1}{2}.
	\]
	The statement of the corollary follows by iteration.
\end{proof}

\section*{Acknowledgements}

We thank Sam Thomas for a careful reading of the manuscript and providing a number of useful comments. We also thank Microsoft Research for its hospitality where parts of this work were completed. The third author also acknowledges the support of the Swedish Research Council and the Knut and Alice Wallenberg Foundation.

\bibliographystyle{abbrv}
\bibliography{biblio}

\end{document}